\newtheorem{Step}{Step}
\newtheorem{Theorem}{Theorem}[section]
\newtheorem{Proposition}[Theorem]{Proposition}
\newtheorem{Remark}[Theorem]{Remark}
\newtheorem{Lemma}[Theorem]{Lemma}
\newtheorem{Corollary}[Theorem]{Corollary}
\newtheorem{Definition}[Theorem]{Definition}
\newtheorem{Algorithm}{Algorithm}
\let\expandafter\oldproof\csname\string\proof\endcsname
\let\oldendproof\endproof
\renewenvironment{proof}[1][\proofname]{
\oldproof[\ttfamily\scshape \bf #1.]
}{\oldendproof}
\newcommand{\set}[1]{\left\{#1\right\}}
\def\ve{\varepsilon}
\def\tilde{\widetilde}
\def\emp{\emptyset}
\def\B{\mathbb B}
\def\ox{\overline{x}}
\def\Hat{\widehat}
\def\Bar{\overline}
\def\ra{\rangle}
\def\la{\langle}
\def\ve{\varepsilon}
\def\epsilon{\varepsilon}
\def\ox{\bar{x}}
\def\prox{\mbox{\rm Prox}}
\def\dn{\downarrow}
\def\O{\Omega}
\def\ph{\varphi}
\def\emp{\emptyset}
\def\st{\stackrel}
\def \N{{\rm I\!N}}
\def \R{{\rm I\!R}}
\def\th{\theta}
\newcommand{\dotproduct}[1]{\left\langle#1\right\rangle}
\newcommand{\brac}[1]{\left(#1\right)}
\newcommand{\sbrac}[1]{\left[#1\right]}
\newcommand{\abs}[1]{\left|#1\right|}
\newcommand{\norm}[1]{\left\|#1\right\|}
\numberwithin{equation}{section}
\title{\bf Inexact reduced gradient methods\\ in nonconvex optimization}
\author{Pham Duy Khanh\footnote{Department of Mathematics, Ho Chi Minh City University of Education, Ho Chi Minh City, Vietnam. E-mail: pdkhanh182@gmail.com}\quad Boris S. Mordukhovich\footnote{Department of Mathematics, Wayne State University, Detroit, Michigan, USA. E-mail: aa1086@wayne.edu. Research of this author was partly supported by the US National Science Foundation under grants DMS-1808978 and DMS-2204519, by the Australian Research Council under grant DP-190100555, and by Project 111 of China under grant D21024.}\quad Dat Ba Tran\footnote{Department of Mathematics, Wayne State University, Detroit, Michigan, USA. E-mail: tranbadat@wayne.edu. Research of this author was partly supported by the US National Science Foundation under grant DMS-1808978.}}
\begin{document}
\maketitle

\noindent
{\small{\bf Abstract}. This paper proposes and develops new linesearch methods with inexact gradient information for finding stationary points of nonconvex continuously differentiable functions on finite-dimensional spaces. Some abstract convergence results for a broad class of linesearch methods are established. A general scheme for inexact reduced gradient (IRG) methods is proposed, where the  errors in the gradient approximation automatically adapt with the magnitudes of the exact gradients. The sequences of iterations are shown to obtain stationary accumulation points when different stepsize selections are employed. Convergence results with constructive convergence rates for the developed IRG methods are established under the Kurdyka-\L ojasiewicz property. The obtained results for the IRG methods are confirmed by encouraging numerical experiments, which demonstrate advantages of automatically controlled errors in IRG methods over other frequently used error selections.}\\[1ex] 
{\bf Key words}: Nonconvex optimization, inexact reduced gradient methods, linesearch methods, Kurdyka-\L ojasiewicz property, convergence rates\\[1ex]
{\bf Mathematics Subject Classification (2020)} 90C30, 90C52, 49M05\vspace*{-0.15in}

 \section{Introduction}\label{sec:intro}\vspace*{-0.05in}
 
Consider the unconstrained optimization problem  formulated as follows:
\begin{align}\label{optim prob}
{\rm minimize}\quad f(x)\quad\text{ subject to }\; x\in\R^n
\end{align}
with a continuously differentiable ($\mathcal{C}^1$-smooth) objective function $f:\R^n\rightarrow\R$. One of the most natural and classical approaches to solve (\ref{optim prob}) is by using \textit{linesearch methods}; see, e.g., \cite{beck,bertsekasbook,solodovbook,nesterovbook18,nocedalbook,polyakbook}. Given a starting point $x^1\in\R^n$, such methods construct the iterative procedure 
\begin{align}\label{intro: stepsize + descent direction}
x^{k+1}:=x^k+t_kd^k\;\text{ for all }\;k\in\N,
\end{align}
where $t_k\ge 0$ is a stepsize at the $k^{\rm th}$ iteration, and where the direction $d^k$ satisfies the  
condition
\begin{align*}
\big\la d^k,\nabla f(x^k)\big\ra<0.
\end{align*}
The classical choice for the direction is $d^k=-\nabla f(x^k)$ when the resulting algorithm is known as the {\em gradient descent method}; see the aforementioned books and the references therein. If $f$ is twice continuously differentiable ($\mathcal{C}^2$-smooth) and the Hessian matrix $\nabla^2 f(x^k)$ is positive-definite, then $d^k$ is chosen by solving the linear equation 
\begin{align*}
-\nabla f(x^k)=\nabla^2 f(x^k)d^k,
\end{align*}
and it is known as a {\em Newton direction} \cite{bertsekasbook,pangbook2,solodovbook}. Additionally, more general choices of descent directions  widely used are the \textit{gradient related} directions \cite[Page~41]{bertsekasbook}, \textit{directions satisfying an angle condition} \cite[Page~541]{absil05}, etc. Together with the descent directions, stepsizes are usually chosen to ensure the decreasing property of the entire sequence $\set{f(x^k)}$ or sometimes only its tail. Well-known stepsize selections are \textit{constant stepsize}, \textit{diminishing stepsize} (not summable), stepsizes following \textit{Armijo rule}, and \textit{Wolfe conditions}; see, e.g., \cite{absil05,beck,bertsekasbook,solodovbook,nesterovbook18,nocedalbook,polyakbook}. 

The stationarity of accumulation points generated by linesearch methods with \textit{gradient related} directions and stepsizes following the \textit{Armijo rule} is established in \cite[Proposition~1.2.1]{bertsekasbook}. When the Lipschitz continuity of the gradient is additionally assumed, the same type of convergence is achieved if either the 
stepsize is constant and directions are gradient related, or the stepsize is diminishing and directions satisfy more involved conditions \cite[Proposition~1.2.3]{bertsekasbook}. The global convergence of some linesearch methods to an isolated stationary point relies on the {\em Ostrowski condition}; see \cite{ostrowski} and \cite[Theorem~8.3.9]{pangbook2}. If there is no guarantee on the isolation of stationary points, algebraic geometry tools introduced by \L ojasiewicz \cite{lojasiewicz65} and Kurdyka \cite{kurdyka} are used for linesearch methods with directions satisfying the {\em angle conditions} and the stepsize following the \textit{Wolfe conditions}; see \cite[Theorem~4.1]{absil05}. While rates of convergence of general linesearch methods are not considered, some specific methods achieve certain convergence rates for particular classes of functions. For instance, the gradient descent method achieves a local linear rate of convergence if the objective function is twice differentiable with Lipschitz continuous Hessian as in \cite[Theorem~1.2.4]{nesterovbook18}, and the (generalized) damped Newton method attains a superlinear convergence rate of under the positive-definiteness of the (generalized) Hessian and some additional assumptions; \cite[Theorem~4.5]{kmpt21}. Furthermore, a linear rate of convergence for the gradient descent method is achieved under either the {\em  Polyak-\L ojasiewicz condition} as in \cite{polyak63} and \cite{karimi16}, or under the {\em weak convexity} of the objective function as in \cite{rotaru22}.

Due to its simplicity, the gradient descent method is broadly used to solve various optimization problems; see, e.g., \cite{bottou18,crockett55,curry44,nielsen15,ruder16}. However, errors in gradient calculations may appear in many situations, which can be found in practical problems arising, e.g., in the design of space trajectories \cite{addis11} and computer-aided design of microwave devices \cite{gilmore95}. Moreover, many nonsmooth optimization problems can be transformed into its smoothed versions by using Moreau envelopes \cite{rockafellarbook} and forward-backward envelopes \cite{stp}. Nevertheless, gradients of smoothed functions cannot be usually computed precisely, and therefore various gradient methods with {\em inexact gradient information} have been suggested. We mention the following major developments in this vein:\vspace*{-0.1in}
\begin{itemize}
\item Devolder et al. \cite{nesterov14} introduce the notion of {\em inexact oracle} and analyze behavior of several first-order methods of {\em smooth convex} optimization employed such an oracle. Nesterov \cite{nesterov15} develops new methods of this type for {\em nonsmooth convex} optimization in the framework of inexact oracle.\vspace*{-0.05in}

\item Gilmore and Kelley \cite{gilmore95} propose an {\em implicit filtering algorithm} to deal with certain box constrained optimization problems, where the objective function is a sum of a $\mathcal{C}^1$-smooth function with Lipschitz continuous gradient and a noise function.\vspace*{-0.05in} 

\item Bertsekas shows in \cite[pp.~44--45]{bertsekasbook} that if the objective function is $\mathcal{C}^1$-smooth with a Lipschitz continuous gradient and if the error of inexact gradient is either small relative to the norm of the exact gradient, or proportional to the stepsize, then convergence behavior of gradient methods is similar to the case where there are no errors.
\end{itemize}\vspace*{-0.05in}

All the convergence results for inexact gradient methods mentioned above  assume that the objective function is {\em either $\mathcal{C}^1$-smooth} with a Lipschitz continuous gradient, {\em or convex}. To the best of our knowledge, general methods of solving nonconvex $\mathcal{C}^1$-smooth optimization problems with inexact information on non-Lipschitzian gradients are {\em not available} in the literature. One of the reasons for this is that verifying the descent property of the sequence of function values without the Lipschitz continuity of $\nabla f$ via the Armijo linesearch requires exact information on gradients. To deal with  inexactness, we need a descent direction that allows us to {\em replace the Armijo linesearch procedure} by another one not demanding exact gradients. In addition, a practical inexact gradient method that uses constant stepsize for a general nonconvex $\mathcal{C}^1$-smooth function with the Lipschitz gradient is also not established. Although an inexact gradient method with constant stepsize is proposed in \cite[pp.~44--45]{bertsekasbook} by using an error smaller than the norm of the exact gradient, the problem of how to control this error while the exact gradient is unknown is still questionable.

Having in mind the above discussion, we introduce new \textit{inexact reduced gradient} (IRG) methods to find stationary points for a general class of nonconvex $\mathcal{C}^1$-smooth functions. Although our proposed methods address {\em smooth} problems, some motivations for them partly come from a certain {\em nonsmooth} algorithm and {\em generalized differential} tools of variational analysis. Specifically, to find a Clarke stationary point of a nonsmooth locally Lipschitzian function, the {\em gradient sampling} (GS) method, introduced by Burke et al. \cite{blo02} and modified by Kiwiel \cite{kiwiel07}, approximates at each iteration the $\ve$-{\em generalized gradient} by the convex hull of nearby gradients. In the GS method, the negative projection of the origin onto this convex hull is chosen as the descent direction and the stepsizes are chosen from the backtracking linesearch as in \cite[Section~4.1]{kiwiel07}. Although the GS method work well for nonsmooth problems, using them for smooth functions seems to be challenging due to, in particular, the necessity to solve subproblems of finding projections onto convex hulls. However, replacing the $\varepsilon$-generalized gradient  by the Fr\'echet-type $\varepsilon$-{\em subdifferential} makes our methods much simpler and suitable for smooth problems. Indeed, the latter construction for a $\mathcal{C}^1$-smooth function at the point in question is just the closed ball centered at its gradient with radius $\varepsilon$. Thus the projection of the origin onto this ball has an explicit and simple form. The descent direction chosen by this projection also allows us to replace the exact gradient by its approximation and to use a linesearch procedure that does not require exact gradients. Developing this idea, we design our \textit{inexact reduced gradient} methods with non-normalized directions together with some stepsize selections such as backtracking stepsize, constant stepsize, and diminishing stepsize. To the best of our knowledge, the IRG methods that we propose and develop in this paper are {\em completely new} even in the {\em exact case}. It should also be emphasized that the proposed IRG methods are {\em not special versions} of the GS one since the latter needs exact gradients at multiple points in each iteration, while the IRG methods need only {\em one inexact  gradient}. Moreover, the iterative sequence of the GS method is chosen randomly while IRG iterations are designed {\em deterministically}. Our main results include the following:
\vspace*{-0.1in}

\begin{itemize}
\item Designing a {\em general framework} for the IRG methods and revealing their basic properties.\vspace*{-0.05in}

\item Finding {\em stationary accumulation points} of iterations in the IRG methods with  backtracking stepsizes as well as with either {\em constant stepsizes}, or {\em diminishing ones} under an additional descent condition on the objective functions.\vspace*{-0.05in}

\item Obtaining the {\em global convergence} of iterations in the IRG methods with {\em constructive convergence rates} depending on the exponent of the imposed {\em Kurdyka-\L ojasiewicz $(KL)$ property} of the objective functions. 
\end{itemize}\vspace*{-0.1in}

These results are achieved by using our newly developed scheme for general linesearch methods described in the following way. To begin with, some conditions are proposed to ensure the stationary of accumulation points in general linesearch methods. If the KL property is additionally assumed, then the global convergence of the iterative sequence to a stationary point is guaranteed. Moreover, the rates of convergence are established if the stepsize is bounded away from zero.\vspace*{0.03in}

From a practical viewpoint, our IRG methods automatically adjust the errors required for finding approximate gradients, which will be shown to have numerical advantages over decreasing errors, e.g., $\varepsilon_k=k^{-p}$ as $p\ge 1$, that are frequently used in the existing methods \cite{bertsekasbook,nesterov14,gilmore95}. To elaborate more on this issue, observe that since the magnitude of the exact gradient is small near the stationary points and is larger elsewhere, decreasing errors that do not take the information of the exact or inexact gradients into consideration may encounter the following phenomena:\vspace*{-0.1in}
\begin{itemize}
\item\textit{Over approximation}, which happens when the magnitude of the exact gradient is large but the error is too small. In this case, the procedures of finding approximate gradient may execute longer than needed to obtain a good approximation of the exact gradient.\vspace*{-0.05in}

\item\textit{Under approximation}, which happens when the magnitude of the  exact gradient is small but the error is too large, which may lead us to an approximate gradient that is not good enough. As a consequence of using such an approximate gradient, the next iterative element can be worse instead of being better than the current one.
\end{itemize}\vspace*{-0.1in}

In contrast to methods using decreasing errors,  our IRG methods, by performing a low cost checking step in each iteration to determine whether the error for the approximation procedure should be decrease or stay the same in the next iteration, use errors which automatically adapt with the magnitudes of the exact gradients to avoid the aforementioned phenomena and exhibit a better performance.\vspace*{0.05in}

The rest of the paper is organized as follows. Section~\ref{sec prelim} discusses basic notions related to  methods. Some abstract convergence results for linesearch methods are established in Section~\ref{sec auxiliary}. In Section~\ref{sec Stationarity of accumulation points}, we introduce a general form of the IRG methods and investigate their principal properties. Our main results about the convergence behavior of the IRG methods with different stepsize selections are given in Section~\ref{sec 5}. The numerical experiments conducted in Section~\ref{sec: Numerical experiments} support the theoretical results obtained in Section~\ref{sec 5} and show that the IRG methods with the new type of automatically controlled errors have a better performance in comparison with the inexact proximal point method in the Least Absolute Deviations Curve-Fitting problem taken from \cite{peter80}. In Section~\ref{sec: Numerical experiments}, we also compare numerically the performance of our IRG methods with those of the reduced gradient method and the gradient decent method employing the exact gradient calculations for two well-known benchmark functions in global optimization. The last Section~\ref{sec:conc} discusses some directions of our future research.
\vspace*{-0.15in}

\section{Linesearch Methods and Related Properties}\label{sec prelim}\vspace*{-0.05in}

First we recall some notions and notation frequently used in what follows. All our considerations are given in the space $\R^n$ with the Euclidean norm $\|\cdot\|$ and scalar/inner product $\dotproduct{\cdot,\cdot}$. We use $\N:=\{1,2,\ldots\}$, $\R_+$, and $\overline{\R}:=\R\cup\set{\infty}$ to denote the collections of natural numbers, positive numbers, and the extended real line, respectively. The symbol $x^k\st{J}{\to}\ox$ means that $x^k\to\ox$ as $k\to\infty$ with $k\in J\subset\N$. For a $\mathcal{C}^1$-smooth function  $f:\R^n\rightarrow\R$, $\bar x$ is a \textit{stationary point} of $f$ if $\nabla f(\bar x)=0$. The function $f$ is said to satisfy the {\em $L$-descent condition} with some $L>0$ if
\begin{align}\label{descent condition}
f(y)\le f(x)+\dotproduct{\nabla f(x),y-x}+\dfrac{L}{2}\norm{y-x}^2\;\text{ for all }\;x,y\in\R^n.
\end{align}
We see that $L$-descent condition \eqref{descent condition} means that the graphs o the quadratic functions $f_{L,x}(y):=f(x)+\dotproduct{\nabla f(x),y-x}+\frac{L}{2}\norm{y-x}^2$ lie above that of $f$ for all $x\in\R^n.$ This condition is equivalent to the convexity of $\dfrac{L}{2}\norm{x}^2-f(x)$ \cite[Lemma~4]{xingyu18}, while being a direct consequence of the $L$-Lipschitz continuity of $\nabla f$, i.e.,\ the Lipschitz continuity of $\nabla f$ with constant $L$; see, e.g., \cite[Proposition~A.24]{bertsekasbook} and \cite[Lemma~A.11]{solodovbook}. The converse implication holds when $f$ is convex \cite{baus17,xingyu18} but fails otherwise. A major class of real-valued functions satisfying the $L$-descent condition but not having the Lipschitz continuous gradient is given by
\begin{align*}
f(x):=\dfrac{1}{2}\dotproduct{Ax,x}+\dotproduct{b,x}+c-h(x),
\end{align*}
where $A$ is an $n\times n$ matrix, $b\in\R^n$, $c\in\R$ and $h:\R^n\rightarrow \R$ is a smooth convex function whose gradient is not Lipschitz continuous, e.g., $h(x):=\norm{Cx-d}^4$, where $C$ is an $m\times n$ matrix and $d\in\R^m.$ Indeed, we can find some $L>0$ such that the matrix $LI-A$ is positive-semidefinite, where $I$ is the $n\times n$ identity matrix. It follows from the second-order characterization of convex functions that
\begin{align*}
\dfrac{L}{2}\norm{x}^2-\Big(\dfrac{1}{2}\dotproduct{Ax,x}+\dotproduct{b,x}+c\Big)=\dfrac{1}{2}\big\la(LI-A)x,x\big\ra-\dotproduct{b,x}-c\;\text{ is convex}.
\end{align*}
Combining this with the convexity of $h$, we get the convexity of $\dfrac{L}{2}\norm{x}^2-f(x),$ which means that $f$ satisfies the $L-$descent property \eqref{descent condition}.

Even when $\nabla f$ is Lipschitz continuous with constant $L>0$, $f$ can satisfy the $\tilde{L}-$descent condition with $\tilde{L}<L$. E.g., consider the univariate function $f$ together with its gradient $\nabla f$ given by
\begin{align*}
f(x)=\begin{cases}
\frac{3}{4}x^2&\text{\rm if }|x|<\frac{2}{3},\\
-\frac{3}{2}x^2+3x-1&\text{\rm if }\frac{2}{3}\le x\le 1,\\
-\frac{3}{2}x^2-3x-1&\text{\rm if }-1 \le x\le -\frac{2}{3},\\
|x|-\frac{x^2}{2}&\text{\rm if }|x|>1
\end{cases}\quad\text{ and }\quad \nabla f(x)=\begin{cases}
\frac{3}{2}x&\text{\rm if }|x|<\frac{2}{3},\\
-3x+3&\text{\rm if }\frac{2}{3}\le x\le 1,\\
3x-3&\text{\rm if }-1 \le x\le -\frac{2}{3},\\
1-x&\text{\rm if }x>1,\\
-1-x&\text{\rm if }x<1.
\end{cases}
\end{align*}
The latter representation implies that $L=3$ is the smallest constant for the Lipschitz continuity of $\nabla f$. Meanwhile, we see on Figure~\ref{fig:h2} that $f$ satisfies the $\tilde{L}$-descent property with $\tilde{L}=3/2.$
\begin{figure}[H]
\centering
\includegraphics[scale=0.3]{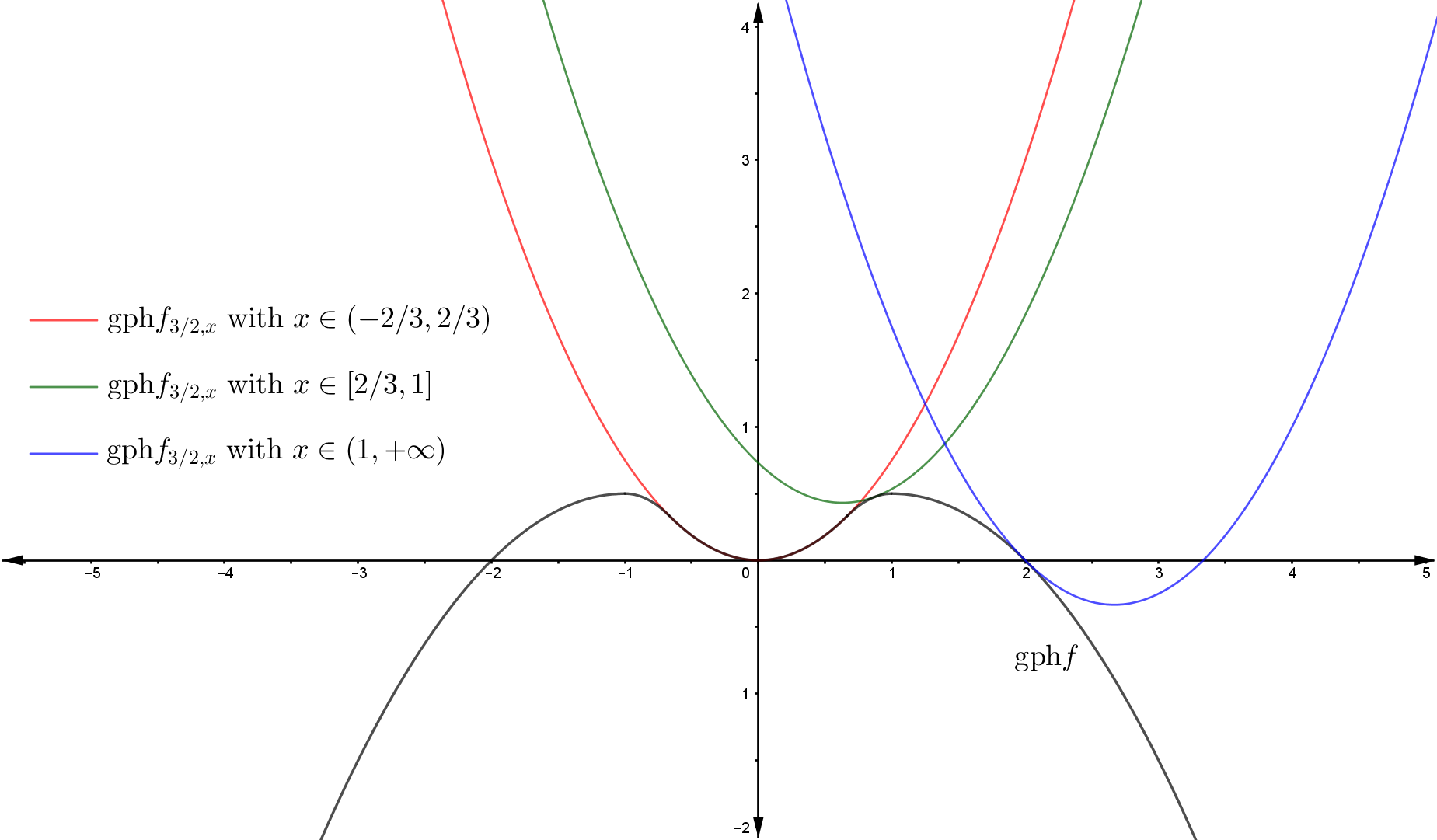}
\caption{An illustration for $f$ and $f_{3/2,x}$.}
\label{fig:h2}
\end{figure}
Next we recall by following \cite[Section~1.2]{bertsekasbook} some basic stepsize selections for the iterative procedure (\ref{intro: stepsize + descent direction}). The stepsize sequence $\set{t_k}$ satisfies the \textit{Armijo rule} if there exist a scalar $\beta$ and a reduction factor $\gamma\in(0,1)$ such that for all $k\in\N$ we have the representation
\begin{align}\label{armijo}
t_k=\max\big\{t\;\big|\;f(x^k+td^k)\le f(x^k)+\beta t\big\la\nabla f(x^k),d^k\big\ra,\;t=1,\gamma,\gamma^2,\ldots\big\}.
\end{align}
This stepsize selection ensures the nonincreasing property of the entire sequence $\set{f(x^k)}$. However, Armijo stepsizes may be small and thus require a large number of stepsize reducing steps in order to make just small changes of the iterative sequence.

To significantly simplify the iterative sequence design, it is possible to consider a {\em constant stepsize}, i.e., $t_k:=\alpha$ for all $k\in\N$. For this rule, the nonincreasing property of $\set{f(x^k)}$ is not ensured in general but holds under the $L$-descent condition (\ref{descent condition}) whenever $\alpha$ is chosen to be sufficiently small with respect to $1/L$, see, e.g., \cite{bertsekasbook,nocedalbook}. However, even when the $L$-descent condition is satisfied for $f$ while an approximate value of $L$ is unknown, using constant stepsizes becomes inefficient. In such a case, it is possible to use the {\em diminishing stepsize} selection, i.e.,
\begin{align}\label{diminishing step size def}
t_k\downarrow 0\;\;\mbox{ as }\;k\to\infty\;\text{ and }\;\sum_{k=1}^\infty t_k=\infty.
\end{align}
Drawbacks of the latter selection are the eventual {\em slow convergence} due to its small stepsizes and the absence of the descent property for the iterative sequence $\set{f(x^k)}$. 

\medskip
Now we formulate a general type of directions that plays a crucial role in our subsequent analysis of various linesearch methods.

\begin{Definition}\rm\label{defi: approximate gradient}
Let $\set{x^k}$ be a sequence in $\R^n$. The direction sequence $\set{d^k}$ is called \textit{gradient associated} with $\set{x^k}$ if we have the implication
\begin{align}\label{condition for stationarity}
\text{ whenever }\;d^k\overset{J}{\rightarrow}0\;\text{ for some infinite set }\;J\subset\N,\;\text{ it holds that }\;\nabla f(x^k)\overset{J}{\rightarrow}0.
\end{align}
\end{Definition}\vspace*{-0.02in}

It can be easily checked that if either
\begin{align}\label{dk grad 0}
\lim_{k\rightarrow\infty}\norm{d^k-\nabla f(x^k)}=0,
\end{align}
or there exists some constant $c>0$ such that
\begin{align}\label{d grad}
\norm{\nabla f(x^k)}\le c\norm{d^k}\;\text{ for all sufficiently large }\;k\in\N,
\end{align}
then $\set{d^k}$ is \textit{gradient associated} with $\set{x^k}$. Many methods such as the gradient descent, the generalized damped Newton method \cite{kmpt21,kmpt21.2}, and the methods appeared in \cite[Proposition~1.2.3]{bertsekasbook} satisfy (\ref{d grad}), while (\ref{dk grad 0}) can be considered as a standard condition for inexact gradient directions. It should be also mentioned that the notion of gradient associated directions is different from the notion of gradient related directions proposed by Bertsekas in \cite{bertsekasbook}.

\medskip
Finally in this section, we discuss two versions of the fundamental KL property playing plays a crucial role in the results on global convergence and convergence rates established in what follows. The first version, which is mainly used in the paper, is due to Absil et al. \cite[Theorem~3.4]{absil05}.

\begin{Definition}\rm \label{KL ine}\rm
Let $f:\R^n\rightarrow\R$ be a differentiable function. We say that $f$ satisfies the \textit{KL property} at $\bar{x}\in\R^n$ if there exist a number $\eta>0$, a neighborhood $U$ of $\bar{x}$, and a nondecreasing function $\psi:(0,\eta)\rightarrow(0,\infty)$ such that the function $1/\psi$ is integrable over $(0,\eta)$ and we have
\begin{align}\label{kl absil}
 \norm{\nabla f(x)}\ge\psi\big(f(x)-f(\bar{x})\big)
\end{align}
for all $x\in U$ with $f(\bar x)<f(x)<f(\bar x)+\eta$.
\end{Definition}

\begin{Remark}\rm \label{KL-attouch} \rm
For smooth functions $f$, the KL property from Definition~\ref{KL ine} is weaker than the KL property of $f$ at $\ox$ introduced by Attouch et al. in \cite{attouch10} meaning that there exist a number $\eta>0$, a neighborhood $U$ of $\bar{x}$, and a continuous concave function $\varphi:[0,\eta]\rightarrow[0,\infty)$ such that 
\begin{itemize}
\item[\bf(i)] $\varphi(0)=0$;
\item[\bf(ii)]$\varphi$ is $\mathcal{C}^1$-smooth on $(0,\eta)$;
\item[\bf(iii)] $\varphi'>0$ on $(0,\eta)$;
\item[\bf(iv)] for all $x\in U$ with $f(\bar{x})<f(x)<f(\bar{x})+\eta$, we have 
\begin{align}\label{KL 2}
\varphi'\big(f(x)-f(\bar{x})\big)\norm{\nabla f(x)}\ge 1.
\end{align}
\end{itemize}

Supposing that $f$ satisfies the KL property at $\ox$ in the sense of Attouch et al., we now show that it satisfies the KL property from Definition~\ref{KL ine}. Indeed, take $\eta,\ U$, and the function $\varphi$ satisfying conditions (i)-(iv) in Remark~\ref{KL-attouch}. By (iii), the function $\psi:(0,\eta)\rightarrow[0,\infty)$ given by
\begin{align*}
\psi(x):=1/\varphi'(x)\;\text{ for all }\;x\in(0,\eta)
\end{align*}
is well-defined. Since $\varphi$ is concave, $\varphi'$ is nonincreasing. Therefore, $1/\psi=\varphi'$ is a nondecreasing positive function that is integrable on $(0,\eta)$. Moreover, by $1/\psi=\varphi'$ we get that (\ref{KL 2}) and (\ref{kl absil}) are equivalent. Thus $f$ satisfies the KL property at $\bar x$ in the sense of Definition~\ref{KL ine}.

It has been realized that KL property in the sense of Attouch et al., and hence the one from Definition~\ref{KL ine}, is satisfied in broad settings. In particular, it holds at every {\em nonstationary point} of $f$; see \cite[Lemma~2.1~and~Remark~3.2(b)]{attouch10}. Furthermore, it is proved at the original paper by \L ojasiewicz \cite{lojasiewicz65} that any analytic function $f:\R^n\rightarrow\R$ satisfies the KL property at every point $\ox$ with $\varphi(t)~=~Mt^{1-q}$ for some $q\in [0,1)$. Typical smooth functions satisfying this property are {\em semialgebraic} functions and also those from the more general class of functions {\em definable in o-minimal structures}; see \cite{bolte06,kurdyka}. For other examples of functions satisfying the KL property, we refer the reader to \cite{attouch10,lewis09} and the bibliographies therein.
\end{Remark}\vspace*{-0.15in}

\section{Abstract Convergence Results for Linesearch Methods}\label{sec auxiliary}

In this section, we establish properties for a general class of linesearch methods of type (\ref{intro: stepsize + descent direction}), which provide major tools for convergence analysis of IRG methods in Section~\ref{sec 5}. One of the most important results desired for linesearch methods is as follows:
\begin{align}\label{stationary statement}
\text{ every accumulation point of }\;\{x^k\}\;\text{ is a stationary point of }\;f.
\end{align}
By the continuity of the gradient mapping, the desired property \eqref{stationary statement} automatically holds if for each accumulation point $\bar x$ of $\set{x^k}$ we can find an infinite set $J\subset\N$ such that $x^k\overset{J}{\rightarrow}\bar x$ and $\nabla f(x^k)\overset{J}{\rightarrow}0$. If the exact information on the gradient is unknown while $\set{d^k}$ is gradient associated with $\set{x^k}$, i.e., \eqref{condition for stationarity} is satisfied, then property (\ref{stationary point lemma}) is satisfied when
\begin{align}\label{abstract tk dk}
x^k\overset{J}{\rightarrow}\bar x\;\text{ and }\;d^k\overset{J}{\rightarrow
}0\;\text{ for some infinite set }\;J\subset\N.
\end{align}
We are going to show that (\ref{abstract tk dk}) holds whenever $0$ is an accumulation point of $\set{d^k}$ and
\begin{align}\label{series is finite}
\sum_{k=1}^\infty\norm{x^{k+1}-x^k}\cdot\norm{d^k}<\infty.
\end{align}
The following lemma is new and is crucial in the convergence analysis for many linesearch methods.

\begin{Lemma}\label{stationary point lemma}
Let $\set{x^k}$ and $\set{d^k}$ be sequences satisfying 
\eqref{series is finite}. If $\bar x$ is an accumulation point of $\set{x^k}$ and if $0$ is an accumulation point of $\set{d^k}$, then there exists an infinite set $J\subset\N$ such that
\begin{align}\label{relation dk xk 3}
x^k\overset{J}{\rightarrow}\bar x\;\mbox{ and }\;d^k\overset{J}{\rightarrow}0.
\end{align}
\end{Lemma}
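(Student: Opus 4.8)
The plan is to reduce the assertion to the single statement $(\star)$: \emph{for every $\ve>0$ there are infinitely many indices $k$ satisfying $\norm{x^k-\ox}<\ve$ and $\norm{d^k}<\ve$ simultaneously.} Once $(\star)$ is available, the set $J$ is produced by a routine diagonal selection: choosing, for each $j\in\N$, an index $m_j>m_{j-1}$ lying in the set associated with $\ve=1/j$ yields $J=\set{m_1<m_2<\cdots}$ along which both $x^k\overset{J}{\rightarrow}\ox$ and $d^k\overset{J}{\rightarrow}0$. So the entire content is in proving $(\star)$, and I would argue it by contradiction.

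To establish $(\star)$, I would fix $\ve>0$ and assume that only finitely many $k$ satisfy both $\norm{x^k-\ox}<\ve$ and $\norm{d^k}<\ve$. Then there is an index $N_0$ such that for every $k\ge N_0$ at least one of $\norm{x^k-\ox}\ge\ve$ or $\norm{d^k}\ge\ve$ holds. Using the summability \eqref{series is finite}, I would then pick $N\ge N_0$ so large that $\sum_{k\ge N}\norm{x^{k+1}-x^k}\cdot\norm{d^k}<\ve^2/2$. Since $\ox$ is an accumulation point of $\set{x^k}$, the set $\set{k:\norm{x^k-\ox}<\ve/2}$ is infinite, so I may fix a starting index $k_0\ge N$ belonging to it.

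The key step, and the place where \eqref{series is finite} does the real work, is a "trapping" argument, proved by induction, showing that $\norm{x^k-\ox}<\ve$ for \emph{all} $k\ge k_0$. The mechanism is that whenever $x^j$ sits in the $\ve$-ball around $\ox$, the contradiction hypothesis forces $\norm{d^j}\ge\ve$, and then the single step is controlled through
\begin{align*}
\norm{x^{j+1}-x^j}=\frac{\norm{x^{j+1}-x^j}\cdot\norm{d^j}}{\norm{d^j}}\le\frac{1}{\ve}\,\norm{x^{j+1}-x^j}\cdot\norm{d^j}.
\end{align*}
Summing these bounds from $k_0$ to $k$ gives $\norm{x^{k+1}-x^{k_0}}\le\ve^{-1}\sum_{j\ge k_0}\norm{x^{j+1}-x^j}\cdot\norm{d^j}<\ve/2$, which together with $\norm{x^{k_0}-\ox}<\ve/2$ keeps $x^{k+1}$ inside the $\ve$-ball and closes the induction. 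Consequently $\norm{x^k-\ox}<\ve$ for every $k\ge k_0$, whence the contradiction hypothesis forces $\norm{d^k}\ge\ve$ for every $k\ge k_0$, contradicting that $0$ is an accumulation point of $\set{d^k}$. This proves $(\star)$.

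The main obstacle to anticipate is precisely this synchronization issue: the two accumulation points are a priori attained along unrelated subsequences, and nothing in the hypotheses by itself links the "times when $x^k$ is near $\ox$" to the "times when $d^k$ is near $0$." The induction above is exactly what converts the weighted-step summability \eqref{series is finite} into such a link, by preventing $x^k$ from escaping a neighborhood of $\ox$ while $d^k$ stays large. The one quantitative point to watch is that the large factor $1/\ve$ appearing in the step estimate must be compensated by choosing the tail of \eqref{series is finite} smaller than $\ve^2/2$; this is the only delicate adjustment the argument requires, and the division by $\norm{d^j}$ is legitimate precisely because it is performed only after $\norm{d^j}\ge\ve>0$ has been secured.
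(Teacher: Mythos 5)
Your proof is correct, and although it rests on the same basic estimate as the paper's argument, the logical route is genuinely different. Both proofs argue by contradiction from the hypothesis that beyond some index no $k$ satisfies $\norm{x^k-\ox}<\ve$ and $\norm{d^k}<\ve$ simultaneously, and both exploit the same conversion: while $x^k$ stays near $\ox$, that hypothesis forces $\norm{d^k}\ge\ve$, so the weighted summability \eqref{series is finite} controls the unweighted travel $\sum\norm{x^{k+1}-x^k}$ over those indices. The difference is the endgame. The paper first disposes of the trivial case $x^k\to\ox$, then (for sufficiently small $\delta$) partitions the tail indices into $A_1$ (far from $\ox$) and $A_2$ (near), and derives the contradiction from the fact that the iterates must travel a distance at least $\delta/2$ from the $\delta/2$-ball out to $A_1$ infinitely often, while the travel budget inside $A_2$ is the tail of a convergent series; what gets contradicted is the infinitude of escapes, i.e., the assumption $x^k\not\to\ox$. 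Your trapping induction instead shows that once $x^{k_0}$ enters the $\ve/2$-ball with the tail of \eqref{series is finite} below $\ve^2/2$, the iterates can never leave the $\ve$-ball at all, whence $\norm{d^k}\ge\ve$ for every $k\ge k_0$; what gets contradicted is that $0$ is an accumulation point of $\set{d^k}$. Your organization buys two simplifications: no case split on whether $x^k\to\ox$ (and hence no restriction to small $\ve$), and no combinatorics of consecutive blocks $K,K+1,\ldots,\widehat{K}-1\in A_2$ terminating at an exit time $\widehat{K}\in A_1$. The paper's version, in exchange, produces for each $N$ a single index $k_N\ge N$ with both properties directly, whereas you recover the subsequence by a (routine) diagonal extraction from the statement $(\star)$; both deliver exactly the conclusion \eqref{relation dk xk 3}.
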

\begin{proof}
If $x^k\rightarrow\bar x$ as $k\to\infty$, the conclusion obviously holds, so suppose that $x^k\not\rightarrow \bar x$. It suffices to show that for any $\delta>0$ sufficiently small and any $N\in\N$ there is a number $k_N\ge N$ such that
\begin{align*}
\|x^{k_N}-\bar x\|<\delta\;\text{ and }\;\|d^{k_N}\|<\delta.
\end{align*}
Fix such $\delta>0$ and $N\in\N$. Since $\delta$ is sufficiently small and $x^k\not\rightarrow\bar x$, suppose that the set
\begin{align*}
A_1:=\big\{k\ge N\;\big|\;\|x^k-\bar x\|\ge\delta\big\}\;\text{ is infinite}.
\end{align*}
As $\bar x$ is an accumulation point of $\set{x^k}$ and $0$ is an accumulation point of $\set{d^k}$, we get that
\begin{align*}
A_2:&=\big\{k\ge N\;\big|\;\|x^k-\bar x\|<\delta\big\}\;\text{ is infinite},\\
A_3:&=\big\{k\ge N\;\big|\;\|x^k-\bar x\|<\delta/2\big\}\;\text{ is infinite},\\
 A_4:&=\big\{k\ge N\;\big|\;\|d^k\|<\delta\big\}\;\text{ is infinite}.
\end{align*}
It suffices to verify that $A_2\cap A_4\ne\emptyset.$ Suppose on the contrary that $A_2\cap A_4=\emptyset$, i.e.,
\begin{align*}
\|d^k\|\ge\delta\;\text{ for all }\;k\in A_2.
\end{align*}
By (\ref{series is finite}), we have the estimates
\begin{align*}
\infty> \sum_{k=1}^\infty\norm{x^{k+1}-x^k}\cdot\norm{d^k}&\ge \sum_{k\in A_2} \norm{x^{k+1}-x^k}\cdot\norm{d^k}\\
&\ge \delta\sum_{k\in A_2}\norm{x^{k+1}-x^k},
\end{align*}
which ensure the series convergence
\begin{align}\label{sum bounded 1}
\sum_{k\in A_2}\norm{x^{k+1}-x^k}<\infty.
\end{align}
Taking any number $K\in A_3$, we also have $K\in A_2.$ Since $A_1$ is infinite and  $A_1,A_2$ form a partition of the set $\set{N,N+1,\ldots}$ including $K$, there exists a number $\widehat{K}\in A_1$ with $\widehat{K}>K$ such that $K,K+1,\ldots,\widehat{K}-1\in A_2$. Then we have the estimates
\begin{align*}
\norm{x^{\widehat{K}}-x^K}\ge\norm{x^{\widehat{K}}-\bar x}-\norm{x^K-\bar x}\ge\delta-\delta/2=\delta/2.
\end{align*}
Using the triangle inequality and (\ref{sum bounded 1}) gives us
\begin{align*}
\delta/2\le\norm{x^{\widehat{K}}-x^K}\le \sum_{i=K}^{\widehat{K}-1}\norm{x^{i+1}-x^i}\le\sum_{i\ge K,\;i\in A_2}\norm{x^{i+1}-x^i}\xrightarrow[K\in A_3]{K\rightarrow\infty}0,
\end{align*}
which brings us to a contradiction that completes the proof of the lemma.
\end{proof}

\begin{Remark}\rm Our proof of Lemma \ref{stationary point lemma} is inspired by the proofs of Kiwiel in \cite[Theorem~3.3(iii)]{kiwiel07}, \cite[Theorem~3.8(iii)]{kiwiel10} and of Burke and Lin in \cite[Theorem~5]{burke21}. If $0$ is not an accumulation point of $\{d^k\}$, then
\eqref{series is finite} implies that $\{x^k\}$ is convergent. Indeed, the negation of the statement that $0$ is an accumulation point of $\{d^k\}$ yields the existence of $\tau>0$ and $K\in\N$ such that  
$$
\|d^k\|\ge\;\tau\text{ for all }\;k\ge K.
$$
It follows from \eqref{series is finite} that    
$$
\tau\sum_{k=K}^\infty \norm{x^{k+1}-x^k}\le
\sum_{k=K}^\infty \norm{x^{k+1}-x^k}\cdot\norm{d^k}<\infty,
$$
which implies that $\{x^k\}$ is a Cauchy sequence,  and thus it converges.
\end{Remark}

Next we recall classical results that describe important properties of the set of accumulation points generated by a sequence satisfying a limit condition introduced by Ostrowski; see (\ref{8.3.8}) below. The result from Lemma~\ref{lemma: ostrowski}(i) was first established in \cite[Theorem~28.1]{ostrowski}, while assertion (ii) of this lemma appeared in \cite{fachi95.1}. These results are also stated as Theorem~8.3.9 and Theorem~8.3.10 in the book of Facchinei and Pang \cite{pangbook2}. More recent usage of the of the Ostrowski condition in the framework of linesearch methods can be found in \cite[Lemma~5(iii)]{bolte14}. 
\begin{Lemma}\label{lemma: ostrowski}
Let $\set{x^k}$ be a sequence satisfying the Ostrowski condition
\begin{align}\label{8.3.8}
\lim_{k\rightarrow\infty}\|x^{k+1}-x^k\|=0.
\end{align}
Then the following assertions hold:
\begin{itemize}
\item[\bf(i)] If $\set{x^k}$ is bounded, then the set of accumulation points of $\set{x^k}$ is nonempty, compact, and connected in $\R^n$.

\item[\bf(ii)] If $\set{x^k}$ has an isolated accumulation point, then this sequence converges to it.
\end{itemize}
\end{Lemma}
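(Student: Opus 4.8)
The plan is to denote by $\Omega$ the set of accumulation points of $\set{x^k}$ and to treat the three topological properties in (i) separately before handling (ii) by a direct crossing argument. The nonemptiness and compactness in (i) are routine and do not use the Ostrowski condition \eqref{8.3.8}: since $\set{x^k}$ is bounded, Bolzano--Weierstrass supplies a convergent subsequence, so $\Omega\ne\emptyset$; moreover $\Omega$ is always closed as a set of cluster points, and being contained in the closure of the bounded range of $\set{x^k}$ it is bounded, hence compact in $\R^n$.

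The substantive part of (i) is connectedness, and here \eqref{8.3.8} enters decisively. I would argue by contradiction: if $\Omega$ were disconnected, write $\Omega=\Omega_1\cup\Omega_2$ with $\Omega_1,\Omega_2$ nonempty, disjoint, and—being closed subsets of the compact set $\Omega$—compact, so that $d:=\dist(\Omega_1,\Omega_2)>0$. Surrounding each $\Omega_i$ by its open $(d/3)$-neighborhood $U_i$ yields disjoint sets with $\dist(U_1,U_2)\ge d/3$. The closed complement $C:=\R^n\setminus(U_1\cup U_2)$ contains no accumulation point of $\set{x^k}$, so only finitely many iterates lie in $C$ (otherwise boundedness would force a cluster point inside the closed set $C$). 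Thus the tail of $\set{x^k}$ lives in $U_1\cup U_2$, visiting each $U_i$ infinitely often because each meets $\Omega$. Consequently there are infinitely many consecutive pairs with $x^k$ in one pocket and $x^{k+1}$ in the other, forcing $\norm{x^{k+1}-x^k}\ge d/3$ along an infinite index set, which contradicts \eqref{8.3.8}; hence $\Omega$ is connected.

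For (ii), let $\ox$ be an isolated accumulation point and fix $\rho>0$ so small that $\ox$ is the only accumulation point of $\set{x^k}$ in the closed ball $\set{x:\norm{x-\ox}\le\rho}$. Suppose, for contradiction, that $x^k\not\to\ox$. Then there is $\ve\in(0,\rho)$ with infinitely many indices satisfying $\norm{x^k-\ox}>\ve$, while infinitely many others satisfy $\norm{x^k-\ox}<\ve/2$ since $\ox$ is a cluster point. Using \eqref{8.3.8} to guarantee $\norm{x^{k+1}-x^k}<\ve/4$ for all large $k$, I would take each large "inside" index $k$ with $\norm{x^k-\ox}<\ve/2$ and let $j$ be the first subsequent index with $\norm{x^j-\ox}\ge\ve/2$; the small-step bound then gives $\norm{x^j-\ox}<3\ve/4$, placing $x^j$ in the compact annulus $\set{x:\ve/2\le\norm{x-\ox}\le 3\ve/4}$. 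This produces infinitely many iterates in that annulus, whose accumulation point $\oy$ lies in the ball of radius $\rho$, differs from $\ox$, and is a cluster point of $\set{x^k}$, contradicting the choice of $\rho$. Therefore $x^k\to\ox$.

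The main obstacle is the connectedness argument in (i): it is the one place where the Ostrowski condition is genuinely indispensable, and the delicate point is converting the statement that the tail alternates between two separated pockets into a uniform lower bound on infinitely many consecutive steps. Part (ii) reuses the same philosophy—small steps cannot leap across an annulus—so once the crossing/annulus mechanism is in place, it follows the same pattern with only bookkeeping on the indices.
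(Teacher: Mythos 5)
Your proof is correct in both parts: the separation-plus-crossing argument for connectedness in (i) and the annulus-trapping argument in (ii) are sound, and you correctly note that (ii) needs no global boundedness since the annulus itself supplies compactness. Note, however, that the paper does not prove this lemma at all — it is recalled as a classical result with citations to Ostrowski's Theorem~28.1 and to Theorems~8.3.9 and~8.3.10 in Facchinei--Pang — so there is no in-paper argument to compare against; your proof is essentially the standard one found in those references.
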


Now we are ready to establish the main result of this section revealing major convergence properties of a general class of linesearch methods.

\begin{Theorem}\label{stationary point theorem}
Let $\set{x^k}$ be a sequence generated by a linesearch method \eqref{intro: stepsize + descent direction} such that:
\begin{itemize}
\item[\bf(a)] $\set{d^k}$ is gradient associated with $\set{x^k}$;
\item[\bf(b)] $0$ is an accumulation point of $\set{d^k};$
\item[\bf(c)] $\displaystyle{\sum_{k=1}^\infty}t_k\|d^k\|^2<\infty.$
\end{itemize}
Then every accumulation point of $\set{x^k}$ is a stationary point of $f$. Moreover, if $\set{t_k}$ is bounded from above, then the following assertions hold:
\begin{itemize}
\item[\bf(i)] If $\set{x^k}$ is bounded, then the set of accumulation points of $\set{x^k}$ is nonempty, compact, and connected.
\item[\bf(ii)] If $\set{x^k}$ has an isolated accumulation point, then this sequence converges to it.
\end{itemize}
\end{Theorem}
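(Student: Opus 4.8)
The plan is to reduce the statement to the two lemmas already in hand, Lemma~\ref{stationary point lemma} and Lemma~\ref{lemma: ostrowski}. The crucial observation is that the update rule $x^{k+1}=x^k+t_kd^k$ gives $\|x^{k+1}-x^k\|=t_k\|d^k\|$, whence
\[
\sum_{k=1}^\infty\|x^{k+1}-x^k\|\cdot\|d^k\|=\sum_{k=1}^\infty t_k\|d^k\|^2<\infty
\]
by hypothesis (c). Thus the summability condition (\ref{series is finite}) demanded by Lemma~\ref{stationary point lemma} holds automatically, which is the bridge connecting the abstract lemmas to the present setting.

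First I would establish the stationarity claim. Let $\bar x$ be an arbitrary accumulation point of $\{x^k\}$. Since $0$ is an accumulation point of $\{d^k\}$ by (b), Lemma~\ref{stationary point lemma} produces an infinite set $J\subset\N$ along which $x^k\overset{J}{\rightarrow}\bar x$ and $d^k\overset{J}{\rightarrow}0$ simultaneously. The gradient-associated property (a) then forces $\nabla f(x^k)\overset{J}{\rightarrow}0$, while continuity of $\nabla f$ together with $x^k\overset{J}{\rightarrow}\bar x$ yields $\nabla f(x^k)\overset{J}{\rightarrow}\nabla f(\bar x)$. Matching the two limits gives $\nabla f(\bar x)=0$, so $\bar x$ is stationary.

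For the two remaining assertions, the plan is to verify the Ostrowski condition (\ref{8.3.8}) and then invoke Lemma~\ref{lemma: ostrowski} directly. Assuming $t_k\le M$ for all $k$, I would factor
\[
\big(t_k\|d^k\|\big)^2=t_k\cdot\big(t_k\|d^k\|^2\big)\le M\cdot\big(t_k\|d^k\|^2\big),
\]
and since convergence of the series in (c) forces $t_k\|d^k\|^2\rightarrow0$, this gives $\|x^{k+1}-x^k\|^2=\big(t_k\|d^k\|\big)^2\rightarrow0$, that is $\|x^{k+1}-x^k\|\rightarrow0$. With the Ostrowski condition secured, parts (i) and (ii) follow immediately from the corresponding assertions of Lemma~\ref{lemma: ostrowski}.

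Since the argument is essentially an assembly of two previously proved lemmas, I do not expect a genuine obstacle. The only step requiring a little care is the passage from (c) to the Ostrowski condition: one cannot conclude $t_k\|d^k\|\rightarrow0$ directly from $t_k\|d^k\|^2\rightarrow0$, and the boundedness of $\{t_k\}$ must be exploited through the factorization above. This is precisely why the boundedness of $\{t_k\}$ is assumed only for the last two conclusions and is not needed for the stationarity claim.
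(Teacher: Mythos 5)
Your proposal is correct and follows essentially the same route as the paper: condition (c) is identified with the summability hypothesis of Lemma~\ref{stationary point lemma}, which combined with (a), (b), and continuity of $\nabla f$ gives stationarity, and the boundedness of $\{t_k\}$ yields the Ostrowski condition so that Lemma~\ref{lemma: ostrowski} gives (i) and (ii). The only cosmetic difference is that the paper deduces $\|x^{k+1}-x^k\|\to 0$ from convergence of the series $\sum_k t_k^2\|d^k\|^2\le\tau\sum_k t_k\|d^k\|^2$, whereas you use the term-wise bound $(t_k\|d^k\|)^2\le M\,t_k\|d^k\|^2\to 0$; both rest on the same factorization.
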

\begin{proof}
Let $\bar x$ be an accumulation point of $\set{x^k}$.  Note that (c) is equivalent to \eqref{series is finite} under the linesearch relationship $x^{k+1}=x^k+t_kd^k$ in (\ref{intro: stepsize + descent direction}). Applying Lemma~\ref{stationary point lemma} with taking into account (b) and (c), we can find an infinite set $J\subset\N$ such that $x^k\overset{J}{\rightarrow }\bar x$ and $d^k\overset{J}{\rightarrow}0$. Then (a) implies that $\nabla f(x^k)\overset{J}{\rightarrow}0$. Employing the continuity of $\nabla f$, we have 
\begin{align*}
\nabla f(\bar x)=\lim_{k\overset{J}{\rightarrow}\infty}\nabla f(x^k)=0,
\end{align*}
which tells us that $\bar x$ is a stationary point of $f$. Suppose now that $\set{t_k}$ is bounded from above by some $\tau>0$. Using (c), we immediately get
\begin{align*}
\sum_{k=1}^\infty\|x^{k+1}-x^k\|^2=\sum_{k=1}^\infty t_k^2\|d^k\|^2\le\tau\sum_{k=1}^\infty t_k\|d^k\|^2<\infty.
\end{align*}
This leads us to
$\norm{x^{k+1}-x^k}\rightarrow 0$ and verifies assertions (i) and (ii) by applying Lemma~\ref{lemma: ostrowski}.
\end{proof}

Theorem~\ref{stationary point theorem} also allows us to ensure the stationarity of accumulation points generated by linesearch methods (\ref{intro: stepsize + descent direction}) applied to functions satisfying the $L$-descent condition (\ref{descent condition}), where the stepsize is either constant or diminishing, and where the direction is \textit{gradient associated} while satisfying the following \textit{sufficient descent} condition
\begin{align}\label{(b)}
 \la\nabla f(x^k),d^k\ra\le -\kappa\|d^k\|^2\;\text{ for all }\;k\in\N
\end{align}
with some constant $\kappa>0$. Note that condition (\ref{(b)}) is different from the gradient associated condition from Definition~\ref{defi: approximate gradient}, since from (\ref{(b)}) we only have that $\nabla f(x^k)\overset{J}{\rightarrow}0$ yields $d^k\overset{J}{\rightarrow}0$ but the reverse implication may not hold. In addition to the gradient descent and generalized Newton methods discussed above, there exist many other linesearch methods using direction (\ref{(b)}); e.g., the boosted difference of convex functions algorithm as in \cite[Proposition~4]{aragon18}), the inexact Levenberg-Marquardt method as in \cite[Algorithm~3.1]{dan02}), and the GS method for nonsmooth functions with non-normalized direction given in \cite[Section~4.1]{kiwiel07}.\vspace*{0.05in} 

We have the following effective consequence of Theorem~\ref{stationary point theorem}.

\begin{Corollary}\label{convergence diminishing}
Let $\set{x^k}$ be a sequence generated by a linesearch method \eqref{intro: stepsize + descent direction}. Suppose that $\inf f(x^k)>-\infty$, and that we have the conditions:
\begin{itemize}
\item[\bf(a)] $f$ satisfies the $L$-descent condition \eqref{descent condition} for some $L>0$;
\item[\bf(b)] the sequence $\set{d^k}$ is gradient associated with $\set{x^k}$ and satisfies \eqref{(b)} for some $\kappa>0;$
\item[\bf(c)] the sequence $\set{t_k}$ is not summable, i.e.,
\begin{align}\label{def dimi}
\displaystyle{\sum_{k=1}^\infty} t_k=\infty,
\end{align}
and there are numbers $\delta>0$ and  $N\in\N$ such that
\begin{align}\label{defi cons}
 t_k\le \dfrac{2\kappa-\delta}{L}\;\text{ for all }\;k\ge N.
\end{align}
\end{itemize}
Then every accumulation point of $\set{x^k}$ is stationary for $f$, and the following assertions hold:
\begin{itemize}
\item[\bf(i)] If the sequence $\set{x^k}$ is bounded, then the collection of accumulation points of $\set{x^k}$ is nonempty, compact, and connected.
\item[\bf(ii)] If $\set{x^k}$ has an isolated accumulation point, then the entire sequence $\set{x^k}$ converges to it.
\end{itemize}
Moreover, if $\set{t_k}$ is bounded away from $0,$ then $\nabla f(x^k)\rightarrow0$ as $k\to\infty$. 
\end{Corollary}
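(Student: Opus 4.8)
The plan is to deduce everything from Theorem~\ref{stationary point theorem} by verifying its three hypotheses in the present setting. Hypothesis (a) of that theorem---that $\set{d^k}$ is gradient associated---is handed to us directly by assumption (b) of the corollary, so the heart of the matter is to produce a summable energy $\sum_k t_k\norm{d^k}^2<\infty$ and to show that $0$ is an accumulation point of $\set{d^k}$. First I would combine the $L$-descent condition \eqref{descent condition}, applied with $x=x^k$ and $y=x^{k+1}=x^k+t_kd^k$, with the sufficient descent condition \eqref{(b)} to obtain
\begin{align*}
f(x^{k+1})\le f(x^k)+t_k\dotproduct{\nabla f(x^k),d^k}+\frac{L}{2}t_k^2\norm{d^k}^2\le f(x^k)-t_k\norm{d^k}^2\Big(\kappa-\frac{L}{2}t_k\Big).
\end{align*}
For $k\ge N$ the stepsize bound \eqref{defi cons} forces $\kappa-\frac{L}{2}t_k\ge\delta/2>0$, so $\set{f(x^k)}_{k\ge N}$ is nonincreasing and, being bounded below by $\inf_k f(x^k)>-\infty$, it converges.

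Next I would telescope the displayed inequality over $k\ge N$, which gives
\begin{align*}
\frac{\delta}{2}\sum_{k=N}^\infty t_k\norm{d^k}^2\le\sum_{k=N}^\infty\big(f(x^k)-f(x^{k+1})\big)=f(x^N)-\lim_{k\to\infty}f(x^k)<\infty;
\end{align*}
since the finitely many initial terms are harmless, hypothesis (c) of Theorem~\ref{stationary point theorem} holds. To get hypothesis (b), I would argue by contradiction: if $0$ were not an accumulation point of $\set{d^k}$, there would exist $\tau>0$ with $\norm{d^k}\ge\tau$ for all large $k$, whence $\sum_k t_k\norm{d^k}^2\ge\tau^2\sum_k t_k=\infty$ by the non-summability assumption \eqref{def dimi}, contradicting the summability just established. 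With (a), (b), (c) in hand, Theorem~\ref{stationary point theorem} yields stationarity of every accumulation point; since \eqref{defi cons} makes $\set{t_k}$ bounded above, assertions (i) and (ii) transfer verbatim from the same theorem.

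For the final claim, suppose $\set{t_k}$ is bounded away from $0$, say $t_k\ge\underline t>0$. Then $\underline t\sum_k\norm{d^k}^2\le\sum_k t_k\norm{d^k}^2<\infty$ forces $d^k\to0$ for the whole sequence, and applying the gradient associated property \eqref{condition for stationarity} with $J=\N$ then yields $\nabla f(x^k)\to0$. The one genuinely nontrivial point to flag is precisely this last step: the sufficient descent condition \eqref{(b)} together with Cauchy--Schwarz gives only $\kappa\norm{d^k}\le\norm{\nabla f(x^k)}$, i.e.\ it bounds $d^k$ in terms of the gradient and not conversely, so it cannot by itself turn $d^k\to0$ into $\nabla f(x^k)\to0$; the gradient associated hypothesis is exactly what supplies the missing reverse implication. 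Everything else is routine telescoping and a standard diminishing-stepsize contradiction, so I anticipate no real obstacle beyond correctly invoking the right half of each directional condition.
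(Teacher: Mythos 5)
Your proof is correct and follows essentially the same route as the paper's: the same descent estimate from the $L$-descent condition and \eqref{(b)}, the same telescoping to get $\sum_k t_k\norm{d^k}^2<\infty$, the same contradiction with \eqref{def dimi} to show $0$ is an accumulation point of $\set{d^k}$, and the same invocation of Theorem~\ref{stationary point theorem} plus the gradient-associated property for the final claim. Your closing observation---that \eqref{(b)} only bounds $\norm{d^k}$ by $\norm{\nabla f(x^k)}/\kappa$ and so the gradient-associated hypothesis is genuinely needed to pass from $d^k\to 0$ to $\nabla f(x^k)\to 0$---is exactly the point the paper makes after stating condition \eqref{(b)}.
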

\begin{proof}
It follows  from (\ref{defi cons}) in condition (c) that 
\begin{align*}
\kappa-\dfrac{Lt_k}{2}\ge \dfrac{\delta}{2}\;\text{ for all }\;k \ge N.
\end{align*}
Since $f$ satisfies $L$-descent condition (\ref{descent condition}), we deduce from (\ref{(b)}) and the latter inequality that 
\begin{align}\label{descent of dimi and constant}
f(x^{k+1})&\le f(x^k)+t_k\dotproduct{\nabla f(x^k),d^k}+\dfrac{Lt_k^2}{2}\norm{d^k}^2\nonumber\\
&\le f(x^k)-t_k\kappa\norm{d^k}^2 +\dfrac{Lt_k^2}{2}\norm{d^k}^2\nonumber\\
&= f(x^k)-t_k\norm{d^k}^2\Big(\kappa-\dfrac{Lt_k}{2}\Big)\nonumber\\
&\le f(x^k)-\dfrac{\delta}{2}t_k\norm{d^k}^2\;\text{ for all }\;k\ge N.
\end{align} 
Then summing up the relationships $\dfrac{\delta}{2}t_k\norm{d^k}^2\le f(x^k)-f(x^{k+1})$ over $k=N,N+1,\ldots$ and using the assumption $\inf f(x^k)>-\infty$ give us 
\begin{align}\label{tk dk impo}
\sum_{k=N}^\infty t_k\norm{d^k}^2<\infty.
\end{align}
Now we show that $0$ is an accumulation point of $\set{d^k}$. Suppose on the contrary that there exist a positive number $u$ and a natural number $K\geq N$ such that \begin{align*}
\norm{d^k}\ge u\;\text{ for all }\;k\ge K.
\end{align*}
Using this together with (\ref{tk dk impo}) imply that  $\sum_{k=K}^\infty t_k<\infty$, which contradicts (\ref{def dimi}). Therefore, $0$ is an accumulation point of $\set{d^k}$. 
Combining the latter with (b), (\ref{tk dk impo}), and (\ref{defi cons}) allows us to confirm that all the assumptions of Theorem~\ref{stationary point theorem} are satisfied. Thus every accumulation point of $\{x_k\}$ is a stationary point of $f$, and both assertions in (i) and (ii) hold.

If finally $\set{t_k}$ is bounded away from $0,$ it follows from (\ref{tk dk impo}) that $d^k\rightarrow0$ as $k\to\infty$. Since the sequence $\set{d^k}$ is gradient associated with $\set{x^k}$ by (b), we get $\nabla f(x^k)\rightarrow 0$.
\end{proof}

\begin{Remark}\rm 
Let us compare Corollary~\ref{convergence diminishing} with two well-known results by Bertsekas \cite{bertsekasbook} about the convergence of linesearch methods. First we compare the  result of Corollary~\ref{convergence diminishing} in the case where $\set{t_k}$ is bounded away from $0$ with Bertsekas's result for the constant stepsize.

 \begin{Proposition}{\rm \cite[Proposition~1.2.2]{bertsekasbook}}
 Let $\set{x^k}$ be a sequence generated by a linesearch method $x^{k+1}=x^k+t_kd^k$, where the sequence $\set{d^k}$ is gradient related. Assume that 
\begin{align}\label{Lipschitz}
\norm{\nabla f(x)-\nabla f(y)}\le L\norm{x-y}\;\text{ for all }\;x,y\in\R^n
\end{align}
with some constant $L>0$, and that for all $k\in\N$ we have $d^k\ne 0$ and 
\begin{align}
\varepsilon\le t_k\le (2-\varepsilon)\bar t_k,
\end{align}
where $\varepsilon$ is a fixed positive scalar, and where
\begin{align*}
\bar t_k:=\dfrac{\abs{\dotproduct{\nabla f(x^k),d^k}}}{L\norm{d^k}^2}.
\end{align*}
Then every limit point of $\set{x^k}$ is a stationary point of $f$.
 \end{Proposition}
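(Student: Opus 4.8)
The plan is to turn the stepsize window $\varepsilon\le t_k\le(2-\varepsilon)\bar t_k$ into a sufficient‑decrease estimate via the Lipschitz continuity of $\nabla f$, and then to contradict the defining property of gradient related directions at any nonstationary limit point. Recall that, in the sense of Bertsekas, $\set{d^k}$ being gradient related with $\set{x^k}$ means that along every subsequence $x^k\overset{K}{\to}\ox$ with $\ox$ nonstationary the set $\set{d^k}_{k\in K}$ is bounded and $\limsup_{k\overset{K}{\to}\infty}\dotproduct{\nabla f(x^k),d^k}<0$; in particular the $d^k$ are descent directions, so $g_k:=\dotproduct{\nabla f(x^k),d^k}<0$, and since $d^k\ne0$ the number $\bar t_k=-g_k/(L\norm{d^k}^2)$ is well defined and positive.

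First I would invoke the descent lemma (a consequence of \eqref{Lipschitz}; see \cite[Proposition~A.24]{bertsekasbook}) to write
\[ f(x^{k+1})\le f(x^k)+t_kg_k+\tfrac{L}{2}t_k^2\norm{d^k}^2. \]
Bounding one factor $t_k$ in the quadratic term by $(2-\varepsilon)\bar t_k$ gives $\tfrac{L}{2}t_k^2\norm{d^k}^2\le(1-\tfrac{\varepsilon}{2})(-g_k)t_k$, so that
\[ f(x^{k+1})-f(x^k)\le t_kg_k+(1-\tfrac{\varepsilon}{2})(-g_k)t_k=\tfrac{\varepsilon}{2}t_kg_k\le\tfrac{\varepsilon^2}{2}g_k<0, \]
where the final inequality uses $t_k\ge\varepsilon$ together with $g_k<0$. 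Thus $\set{f(x^k)}$ is strictly decreasing.

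Next I would pass to the limit. Let $\ox$ be a limit point and fix $x^k\overset{K}{\to}\ox$. By continuity of $f$ we have $f(x^k)\overset{K}{\to}f(\ox)$; being nonincreasing with a convergent subsequence, the whole sequence $\set{f(x^k)}$ converges to $f(\ox)$, whence $f(x^k)-f(x^{k+1})\to0$. The estimate above then forces $t_kg_k\to0$, and since $t_k\ge\varepsilon>0$ we conclude that $\dotproduct{\nabla f(x^k),d^k}=g_k\to0$ as $k\to\infty$ along the \emph{entire} sequence.

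Finally, suppose toward a contradiction that $\ox$ is not stationary. The gradient related property applied along $K$ yields $\limsup_{k\overset{K}{\to}\infty}\dotproduct{\nabla f(x^k),d^k}<0$, contradicting $g_k\to0$; hence $\ox$ is stationary. The only real subtlety is the correct use of the gradient related definition: the argument rests on its $\limsup<0$ clause, which differs from the gradient associated condition of Definition~\ref{defi: approximate gradient}. Notably, its boundedness clause is not needed here, since the sufficient decrease combined with $t_k\ge\varepsilon$ already delivers $g_k\to0$ directly, and it is the monotonicity of $\set{f(x^k)}$ (rather than a separate lower‑boundedness hypothesis) that supplies convergence of the function values.
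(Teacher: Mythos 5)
Your proof is correct, and it is essentially the classical argument: the paper itself does not prove this proposition but imports it from Bertsekas (cited as \cite[Proposition~1.2.2]{bertsekasbook}) purely for comparison with Corollary~\ref{convergence diminishing}, and your chain — descent lemma, stepsize window $\varepsilon\le t_k\le(2-\varepsilon)\bar t_k$ turned into the sufficient decrease $f(x^{k+1})-f(x^k)\le\frac{\varepsilon}{2}t_k\dotproduct{\nabla f(x^k),d^k}\le\frac{\varepsilon^2}{2}\dotproduct{\nabla f(x^k),d^k}$, convergence of $\set{f(x^k)}$ forcing $\dotproduct{\nabla f(x^k),d^k}\to 0$, then contradiction with the $\limsup<0$ clause of the gradient related definition at a nonstationary limit point — is exactly the textbook proof. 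Your argument also confirms, as a byproduct, the paper's own observation (item~1 of the remark following the proposition) that the Lipschitz hypothesis \eqref{Lipschitz} enters only through the descent lemma, so the proof really runs under the weaker $L$-descent condition \eqref{descent condition}. One small interpretive caveat: the negativity $\dotproduct{\nabla f(x^k),d^k}<0$ for every $k$ is not literally a consequence of the gradient related definition (whose $\limsup$ clause only constrains subsequences converging to nonstationary points); it is a standing assumption of Bertsekas's descent-method framework, which is the reading you adopt — the stated hypotheses by themselves only exclude $\dotproduct{\nabla f(x^k),d^k}=0$, since that would force $\bar t_k=0<\varepsilon\le t_k$.
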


 Our comparison with this result are as follows:
\begin{enumerate}
\item Observe first that the result of \cite[Proposition~1.2.2]{bertsekasbook} assumes the Lipschitz continuity of $\nabla f$, while the proof works under the $L$-descent condition (\ref{descent condition}). Thus there is no difference between the class of functions considered in \cite[Proposition~1.2.2]{bertsekasbook} and Corollary~\ref{convergence diminishing}.

\item Regarding the stepsize selection, our choice is much simpler than the one used in \cite[Proposition~1.2.2]{bertsekasbook}. Observe that in the direction selection, we use $\set{d^k}$ as \textit{gradient associated} with $\set{x^k}$ and satisfying the sufficient descent condition, while \cite[Proposition~1.2.2]{bertsekasbook} used the gradient related direction. It is commented in the note after \cite[Proposition~1.2.2]{bertsekasbook} that if the direction is chosen better, i.e., there are two numbers $c_1,c_2>0$ satisfying 
\begin{align}\label{1.28}
c_1\norm{\nabla f(x^k)}^2\le -\dotproduct{\nabla f(x^k),d^k},\quad \norm{d^k}^2\le c_2\norm{\nabla f(x^k)}^2,\quad k\in\N, 
\end{align}
then the stepsize selection can be reduced to the one chosen above. It can be seen that (\ref{1.28}) yields the estimates
\begin{align*}
\dotproduct{\nabla f(x^k),d^k}\le- c_1\norm{\nabla f(x^k)}^2 \le- \dfrac{c_1}{c_2}\norm{d^k}^2\;\text{ for all }\;k\in\N,
\end{align*}
and for any infinite set $J$ we have $d^k\overset{J}{\rightarrow}0$ if and only if $\nabla f(x^k)\overset{J}{\rightarrow}0$. Therefore, condition  (b) in Corollary~\ref{convergence diminishing} and the gradient related condition share the common direction (\ref{1.28}). 

\item Comparing the conclusions of Corollary~\ref{convergence diminishing} and of \cite[Proposition~1.2.2]{bertsekasbook}, observe that our result establishes the convergence to $0$ of the entire sequence $\set{\nabla f(x^k)}$, which is stronger than the stationarity of accumulation points obtained by \cite[Proposition~1.2.2]{bertsekasbook}. Moreover, our results about the set of accumulation points and convergence to an isolated accumulation point by using the Ostrowski condition are not given in \cite[Proposition~1.2.2]{bertsekasbook}.
\end{enumerate}

Next we compare the usage of the diminishing stepsize in Corollary~\ref{convergence diminishing} with the the corresponding result by Bertsekas \cite{bertsekasbook}.

\begin{Proposition}{\rm\cite[Proposition~1.2.3]{bertsekasbook}}
Let $\set{x^k}$ be a sequence generated by a linesearch method $x^{k+1}=x^k+t_kd^k$. Assume that $\nabla f$ is $L$-Lipschitzian, and that there exist positive scalars $c_1,c_2$ such that \eqref{1.28} holds for all $k\in\N$. Suppose also that 
\begin{align}\label{dimsum}
t_k\rightarrow 0\;\mbox{ and }\;\sum_{k=0}^\infty t_k=\infty.
\end{align}
Then either $f(x^k)\rightarrow-\infty$, or $\set{f(x^k)}$ converges to a finite value with $\nabla f(x^k)\rightarrow 0$ as $k\to\infty$. Furthermore, every accumulation point of $\set{x^k}$ is a stationary point of $f$.
\end{Proposition}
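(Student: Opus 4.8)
The plan is to follow the classical template for gradient-type methods with diminishing stepsizes: extract a summable ``gradient energy'' from a telescoping of the function values, and then upgrade a $\liminf$ statement to a genuine limit using the Lipschitz continuity of $\nabla f$. First I would apply the $L$-descent condition \eqref{descent condition} (which follows from the $L$-Lipschitz continuity \eqref{Lipschitz}) to the consecutive iterates $x^k$ and $x^{k+1}=x^k+t_kd^k$, obtaining
\begin{align*}
f(x^{k+1})\le f(x^k)+t_k\dotproduct{\nabla f(x^k),d^k}+\frac{Lt_k^2}{2}\norm{d^k}^2.
\end{align*}
Substituting the two direction inequalities from \eqref{1.28}, namely $\dotproduct{\nabla f(x^k),d^k}\le-c_1\norm{\nabla f(x^k)}^2$ and $\norm{d^k}^2\le c_2\norm{\nabla f(x^k)}^2$, collapses this to
\begin{align*}
f(x^{k+1})\le f(x^k)-t_k\norm{\nabla f(x^k)}^2\Big(c_1-\frac{Lc_2t_k}{2}\Big).
\end{align*}
Since $t_k\to0$ by \eqref{dimsum}, there is an index $N$ past which $c_1-\frac{Lc_2t_k}{2}\ge c_1/2$, so $\set{f(x^k)}$ is nonincreasing for $k\ge N$.

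A monotone sequence either diverges to $-\infty$ or converges to a finite value, which is exactly the dichotomy in the statement. In the convergent case, telescoping the displayed inequality over $k\ge N$ and using the finite limit $f^*:=\lim_k f(x^k)$ yields the key summability
\begin{align*}
\sum_{k=N}^\infty t_k\norm{\nabla f(x^k)}^2\le\frac{2}{c_1}\big(f(x^N)-f^*\big)<\infty.
\end{align*}
Combined with the non-summability $\sum_k t_k=\infty$ from \eqref{dimsum}, this immediately forces $\liminf_{k\to\infty}\norm{\nabla f(x^k)}=0$, since otherwise $\norm{\nabla f(x^k)}^2$ would be bounded below by a positive constant for large $k$ and the energy sum would inherit the divergence of $\sum_k t_k$.

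The hard part, and the only place where the full strength of Lipschitz continuity enters, is upgrading this $\liminf$ to the limit $\nabla f(x^k)\to0$. I would argue by contradiction: if $\limsup_k\norm{\nabla f(x^k)}=\eta>0$, then, writing $g_k:=\norm{\nabla f(x^k)}$, the fact that $\liminf_k g_k=0<\eta$ produces infinitely many disjoint index intervals (excursions) on which $g_k$ rises from below $\eta/3$ to above $2\eta/3$ while staying bounded below by $\eta/3$. On each such interval the Lipschitz estimate $|g_{k+1}-g_k|\le L\norm{x^{k+1}-x^k}=Lt_k\norm{d^k}\le L\sqrt{c_2}\,t_kg_k$ controls the per-step increment, so summing the increments across a single excursion forces $\sum_{\rm exc}t_kg_k$ to exceed a fixed positive constant; since $g_k\ge\eta/3$ there, one may replace $t_kg_k$ by $(3/\eta)t_kg_k^2$, so each excursion contributes at least a fixed positive amount to $\sum_k t_kg_k^2$. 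Infinitely many such contributions contradict the summability just established, whence $\nabla f(x^k)\to0$.

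Finally, the stationarity of accumulation points is a short continuity argument. If $f(x^k)\to-\infty$, then $\set{x^k}$ has no accumulation point, because a limit along a subsequence would force $f$ to a finite value by continuity; the claim is then vacuous. In the convergent case $\nabla f(x^k)\to0$, and for any accumulation point $\bar x$ with $x^{k_j}\to\bar x$ the continuity of $\nabla f$ gives $\nabla f(\bar x)=\lim_j\nabla f(x^{k_j})=0$. I expect the excursion estimate to be the main obstacle, as it is the step that genuinely exploits Lipschitz continuity together with the interplay between $\sum_k t_k=\infty$ and the summable energy $\sum_k t_k\norm{\nabla f(x^k)}^2$.
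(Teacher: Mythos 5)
This proposition is stated in the paper without proof --- it is quoted from Bertsekas \cite[Proposition~1.2.3]{bertsekasbook} purely for comparison with Corollary~\ref{convergence diminishing} --- so there is no in-paper argument to measure you against; your proposal must be judged on its own, and it is correct. It is, in substance, the classical proof of this result: the $L$-descent inequality combined with \eqref{1.28} gives $f(x^{k+1})\le f(x^k)-t_k\norm{\nabla f(x^k)}^2\bigl(c_1-\tfrac{Lc_2t_k}{2}\bigr)$, hence eventual monotonicity of $\set{f(x^k)}$ and the stated dichotomy; in the convergent case telescoping yields $\sum_k t_k\norm{\nabla f(x^k)}^2<\infty$, which together with $\sum_k t_k=\infty$ forces $\liminf_{k\to\infty}\norm{\nabla f(x^k)}=0$; and the Lipschitz crossing (``excursion'') argument upgrades the $\liminf$ to a genuine limit. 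The first half of your argument is exactly what the paper does in proving its Corollary~\ref{convergence diminishing} under the weaker $L$-descent condition, and your observation that the full Lipschitz continuity is needed only for the $\liminf$-to-limit upgrade matches the paper's own commentary following the proposition.

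One detail to tighten in the excursion step: with excursions that ``rise from below $\eta/3$,'' the initial index $m_j$ of each excursion has $g_{m_j}<\eta/3$, so the substitution $t_kg_k\le(3/\eta)t_kg_k^2$ fails at that single index. The repair is routine: since $t_k\to0$, the offending term satisfies $t_{m_j}g_{m_j}\le t_{m_j}\eta/3\to0$ as $j\to\infty$, so for all large $j$ the remaining indices of the excursion, which do satisfy $g_k\ge\eta/3$, still carry at least half of the lower bound $\eta/(3L\sqrt{c_2})$ on $\sum t_kg_k$; each excursion therefore still contributes a fixed positive amount to $\sum_k t_kg_k^2$, giving the contradiction. (Alternatively, start each excursion at the first index at or above $\eta/3$ and use the Lipschitz bound on per-step increments to see that the value there is $\eta/3+o(1)$, so the rise within the excursion remains bounded below.) This is bookkeeping rather than a gap, but as written the key inequality is invoked at one index where it does not hold.
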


Here are our comparison with the latter proposition. 

\begin{enumerate}
\item The $L$-Lipschitz continuity of $\nabla f$ yields $L$-descent condition while the converse implication fails; see the commentaries in Section~\ref{sec prelim}. The convergence of $\set{\nabla f(x^k)}$ to $0$ is the main result of \cite[Proposition~1.2.3]{bertsekasbook}, and technically it cannot be derived without the $L$-Lipschitz continuity of $\nabla f$ or its small modifications. Relaxing the Lipschitz continuity of the gradient to the descent condition may allow us to work with broader classes of functions by replacing the usual Euclidean distance by the {\em Bregman distance}; see, e.g., \cite{baus17,bregman,nesterov18}.

\item The diminishing stepsize  (\ref{dimsum}) satisfies both conditions in Corollary~\ref{convergence diminishing}(c), while the converse implication may not hold.

\item As already mentioned above, condition (\ref{1.28}) yields (b) in Corollary~\ref{convergence diminishing}.

\item The conclusions of Corollary~\ref{convergence diminishing} provides the convergence to an isolated stationary point, while \cite[Proposition~1.2.3]{bertsekasbook} does not establish this.
\end{enumerate}

To summarize, if considering the stationarity of accumulation points only, Corollary~\ref{convergence diminishing} covers \cite[Proposition~1.2.3]{bertsekasbook} with a larger class of objective functions as well as descent directions. On the other hand, the convergence of $\set{\nabla f(x^k)}$ to $0$ is not obtained in Corollary~\ref{convergence diminishing} since the $L$-Lipschitz continuity is relaxed therein.
\end{Remark}

Next we present the convergence result for linesearch methods under the fulfillment of the \textit{KL property}, which is taken from \cite[Theorem~3.4]{absil05}.

\begin{Proposition}\label{general convergence under KL}
Let $f:\R^n\rightarrow\R$ be a $\mathcal{C}^1$-smooth function, and let the sequence of iterations $\set{x^k}\subset\R^n$ satisfy the following conditions:
\begin{itemize}
\item[\bf(H1)] {\rm(primary descent condition)}. There exists $\sigma>0$ such that for sufficiently large $k\in\N$ we have 
\begin{align*}
 f(x^k)-f(x^{k+1})\ge\sigma\norm{\nabla f(x^k)}\cdot\norm{x^{k+1}-x^k}.
\end{align*}
\item[\bf(H2)] {\rm(complementary descent condition)}. For sufficiently large $k\in\N$, we have
\begin{align*}
 \big[f(x^{k+1})=f(x^k)\big]\Longrightarrow [x^{k+1}=x^k].
\end{align*}
\end{itemize}
If $\bar x$ is an accumulation point of $\set{x^k}$  and $f$ satisfies the KL property at $\bar x$, then $x^k\rightarrow\bar x$ as $k\to\infty$.
\end{Proposition}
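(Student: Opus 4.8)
The plan is to use the KL property to prove that $\{x^k\}$ is a Cauchy sequence; since $\bar x$ is an accumulation point, the Cauchy limit must then coincide with $\bar x$. First I would record the monotonicity of the function values: condition (H1) gives $f(x^k)-f(x^{k+1})\ge\sigma\norm{\nabla f(x^k)}\cdot\norm{x^{k+1}-x^k}\ge 0$, so $\{f(x^k)\}$ is nonincreasing. Passing to a subsequence $x^{k_j}\to\bar x$ and invoking the continuity of $f$, the monotone sequence $\{f(x^k)\}$ must converge to $f(\bar x)$, and in particular $f(x^k)\ge f(\bar x)$ for all $k$.

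Next I would split into two cases. If $f(x^{k_0})=f(\bar x)$ for some $k_0$, then nonincreasingness together with $f(x^k)\ge f(\bar x)$ forces $f(x^{k+1})=f(x^k)$ for all $k\ge k_0$; condition (H2) then yields $x^{k+1}=x^k$, so the sequence is eventually constant and, being accumulated by $\bar x$, converges to $\bar x$. The substantive case is $f(x^k)>f(\bar x)$ for every $k$. Set $r_k:=f(x^k)-f(\bar x)>0$, so that $r_k\downarrow 0$, and define $\Phi(r):=\int_0^r \psi(t)^{-1}\,dt$, which is finite because $1/\psi$ is integrable on $(0,\eta)$, is continuous with $\Phi(0)=0$, and has a nonincreasing integrand. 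For large $k$ with $0<r_k<\eta$ and $x^k$ in the KL-neighborhood $U$, I would chain three inequalities: monotonicity of $\psi$ gives $\Phi(r_k)-\Phi(r_{k+1})=\int_{r_{k+1}}^{r_k}\psi(t)^{-1}\,dt\ge (r_k-r_{k+1})/\psi(r_k)$; condition (H1) bounds $r_k-r_{k+1}$ below by $\sigma\norm{\nabla f(x^k)}\cdot\norm{x^{k+1}-x^k}$; and the KL inequality (\ref{kl absil}) gives $\norm{\nabla f(x^k)}\ge\psi(r_k)$. Combining these cancels $\psi(r_k)$ and produces the step estimate $\norm{x^{k+1}-x^k}\le \sigma^{-1}\big(\Phi(r_k)-\Phi(r_{k+1})\big)$.

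The delicate point is that the KL inequality is only available while $x^k$ remains in $U$, so the step estimate cannot simply be summed; it must be bootstrapped by an induction that keeps the iterates inside a ball $B(\bar x,\rho)\subset U$. I would fix $\rho>0$ with $B(\bar x,\rho)\subset U$ and, using $x^{k_j}\to\bar x$, $r_k\downarrow 0$, and continuity of $\Phi$ at $0$, choose an index $N$ along the subsequence with $0<r_k<\eta$ for all $k\ge N$ and $\norm{x^N-\bar x}+\sigma^{-1}\Phi(r_N)<\rho$. An induction then shows $x^k\in B(\bar x,\rho)$ for all $k\ge N$: assuming $x^N,\dots,x^k\in B(\bar x,\rho)$, the step estimate is valid at these indices, so telescoping yields $\sum_{i=N}^{k}\norm{x^{i+1}-x^i}\le \sigma^{-1}\big(\Phi(r_N)-\Phi(r_{k+1})\big)\le \sigma^{-1}\Phi(r_N)$, whence $\norm{x^{k+1}-\bar x}\le\norm{x^N-\bar x}+\sigma^{-1}\Phi(r_N)<\rho$, closing the induction. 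This closure — verifying that the very bound used to prove summability also confines the iterates to the neighborhood — is the main obstacle. Once it is in place, letting $k\to\infty$ gives $\sum_{i=N}^\infty\norm{x^{i+1}-x^i}<\infty$, so $\{x^k\}$ is Cauchy and hence convergent; since $\bar x$ is an accumulation point, its limit is $\bar x$, which completes the argument.
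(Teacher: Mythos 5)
Your proof is correct, and it is essentially the argument that the paper appeals to: the paper does not prove Proposition~\ref{general convergence under KL} itself but cites \cite[Theorem~3.4]{absil05}, sketching only the key step estimate \eqref{kl induction} for the power case $\psi(t)=Mt^{q}$ and the resulting summability \eqref{sum series cauchy}. Your chain of inequalities (monotonicity of $1/\psi$, condition (H1), and the KL inequality \eqref{kl absil}) produces exactly that estimate with the general desingularizing function $\Phi(r)=\int_0^r\psi(t)^{-1}\,dt$, which reduces precisely to \eqref{kl induction} when $\psi(t)=Mt^{q}$, since then $\Phi(r)=r^{1-q}/\bigl(M(1-q)\bigr)$. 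What you supply beyond the paper's sketch is exactly what a complete proof requires: the degenerate case $f(x^{k_0})=f(\bar x)$ settled via (H2), and the bootstrapping induction that confines the iterates to the KL neighborhood $U$ so that the step estimate may legitimately be telescoped --- a point that the paper's two-line passage from \eqref{kl induction} to \eqref{sum series cauchy} glosses over. One piece of minor bookkeeping: since (H1) and (H2) are assumed only for sufficiently large $k$, the monotonicity of $\set{f(x^k)}$ and the inequality $f(x^k)\ge f(\bar x)$ hold only from some index onward, and your choice of $N$ should also be taken beyond that index; this does not affect the argument.
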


Note that \cite[Theorem~3.2]{absil05} provides a detailed proof for Proposition~\ref{general convergence under KL} when $f$ satisfies the KL property at $\bar x$ with $\psi(t)=Mt^q$ for some $M>0$ and $q\in [0,1)$. The key ingredient of the proof is the existence of some $k_1\in\mathbb{N}$ such that
\begin{align}\label{kl induction}
 \norm{x^{k+1}-x^k}&\le  \dfrac{1}{\sigma M(1-q )}\sbrac{(f(x^k)-f(\bar x))^{1-q }-(f(x^{k+1})-f(\bar x))^{1-q }}\text{ for all }k\ge k_1.
\end{align}
Using this, we can deduce that
\begin{align}\label{sum series cauchy}
\sum_{k=1}^\infty \norm{x^{k+1}-x^k}<\infty,
\end{align}
and so the sequence $\set{x^k}$ is convergent.

\medskip
Next we address {\em rates of convergence} of linesearch methods of type \eqref{intro: stepsize + descent direction}. Finding convergence rates for the gradient descent method in nonconvex settings is an important issue that has received an increasing attention recently; see, e.g., \cite{karimi16,rotaru22} and the references therein. Our results in Theorem~\ref{general rate} establish the rate of convergence of general linesearch methods under the KL property and the boundedness of stepsizes away from $0$. The proof below employs the technique developed in the proof of \cite[Theorem~2]{attouch09} for the proximal point method. This technique is also used in the proofs of convergence rates for the difference of convex algorithm \cite{an09} and its boosted version  in \cite[Theorem~1]{aragon18}. The next technical lemma taken from \cite[Lemma~1]{aragon18} is needed in what follows.

\begin{Lemma}\label{lemma KL}
Let $\set{s_k}$ be a nonnegative sequence in $\R$, and let $\alpha,\beta$ be some positive constants. Suppose that $s_k\rightarrow 0$ as $k\to\infty$ and that this sequence satisfies the condition
\begin{align*}
s_k^\alpha\le\beta(s_k-s_{k-1})\;\text{ for all }\;k\in\N\;\text{ sufficiently large}.
\end{align*}
The following assertions hold:
\begin{enumerate}
\item[\bf (i)] If $\alpha\in(0,1]$, then $s_{k+1}\le\brac{1-\dfrac{1}{\beta}}s_k$ for all $k\in\N$ sufficiently large, i.e., $\set{s_k}$ converges linearly to $0$ with rank $1-\dfrac{1}{\beta}.$
\item[\bf (ii)] If $\alpha>1$, then there exists a number $\varrho>0$ such that 
\begin{align*}
 s_k\le\varrho k^{-\frac{1}{\alpha-1}}\;\text{ for all }\;k\in\N\;\text{ sufficiently large}.
\end{align*}
\end{enumerate}
\end{Lemma}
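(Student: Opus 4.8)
The plan is to treat the hypothesis in the form $\beta^{-1}s_k^\alpha\le s_{k-1}-s_k$, so that the successive gaps of $\set{s_k}$ are controlled by its own powers; in particular $\set{s_k}$ is eventually nonincreasing. The two assertions are then handled by genuinely different arguments according to whether $\alpha\le 1$ or $\alpha>1$.

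For (i), the key point is that when $\alpha\in(0,1]$ and $s_k$ is already small, raising it to the power $\alpha$ \emph{increases} it. Concretely, since $s_k\to 0$ we have $s_k\le 1$ for all large $k$, and then $s_k\le s_k^\alpha$. Combining this with the hypothesis gives the purely linear inequality $s_k\le\beta(s_{k-1}-s_k)$ for all large $k$. A direct rearrangement isolating $s_k$ produces a contraction $s_k\le r\,s_{k-1}$ with a factor $r\in(0,1)$ depending only on $\beta$, which is exactly the claimed linear convergence; no further work is needed here.

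For (ii) the natural change of variables is $t_k:=s_k^{-(\alpha-1)}$, so that the target bound $s_k\le\varrho\,k^{-1/(\alpha-1)}$ is equivalent to the linear growth $t_k\ge c\,k$ for some $c>0$. Because $\alpha>1$, the map $g(s):=s^{-(\alpha-1)}$ is decreasing and convex on $(0,\infty)$, so convexity (equivalently, the mean value theorem) yields $t_k-t_{k-1}=g(s_k)-g(s_{k-1})\ge g'(s_{k-1})(s_k-s_{k-1})=(\alpha-1)s_{k-1}^{-\alpha}(s_{k-1}-s_k)$. Feeding in the hypothesis $s_{k-1}-s_k\ge\beta^{-1}s_k^\alpha$ gives $t_k-t_{k-1}\ge\dfrac{\alpha-1}{\beta}\brac{\dfrac{s_k}{s_{k-1}}}^\alpha$.

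The main obstacle is that the ratio $s_k/s_{k-1}$ need not be bounded away from $0$, so the last estimate alone does not yet furnish a uniform positive lower bound on the increments. I would resolve this by the standard dichotomy: fix $\bar\mu\in(0,1)$ and split the (large) indices into those with $s_k/s_{k-1}\ge\bar\mu$ and those with $s_k/s_{k-1}<\bar\mu$. On the first set the displayed inequality directly gives $t_k-t_{k-1}\ge\dfrac{\alpha-1}{\beta}\bar\mu^\alpha$. On the second set I instead expand $t_k-t_{k-1}=s_k^{-(\alpha-1)}\brac{1-\brac{s_k/s_{k-1}}^{\alpha-1}}\ge\brac{1-\bar\mu^{\alpha-1}}t_k$, and since $\set{t_k}$ is nondecreasing and bounded below by a fixed positive value $t_{k_0}$ for large $k$, this again bounds $t_k-t_{k-1}$ below by a positive constant. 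Taking the smaller of the two constants, the increments of $\set{t_k}$ exceed a fixed $c>0$ for all large $k$; telescoping then gives $t_k\ge c(k-k_0)+t_{k_0}$, hence $t_k\ge c'k$ for large $k$, and reverting the substitution produces $s_k\le\varrho\,k^{-1/(\alpha-1)}$ as required.
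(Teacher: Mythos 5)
First, a point of reference: the paper does not prove this lemma at all --- it is imported verbatim from \cite[Lemma~1]{aragon18}, whose argument in turn follows \cite[Theorem~2]{attouch09} --- so your proposal has to be judged against the cited source rather than against anything in the paper itself. Your argument is essentially that standard one: the one-line rearrangement for $\alpha\in(0,1]$, and for $\alpha>1$ the change of variables $t_k=s_k^{-(\alpha-1)}$, a convexity (mean value) estimate, a dichotomy on the ratio $s_k/s_{k-1}$, and telescoping. Your execution of part (ii) is correct, including the step that is often glossed over (the ratio $s_k/s_{k-1}$ not being bounded away from zero); the only genuine omission there is the trivial remark that if some $s_k$ vanishes then the inequality forces all later terms to vanish, so that both conclusions are trivial and $t_k$ is well defined in the remaining case.

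The one place where you should be more careful is the reading of the hypothesis. As printed, $s_k^\alpha\le\beta(s_k-s_{k-1})$ is a typo: together with $s_k\ge 0$ and $s_k\to 0$ it forces $s_k=0$ for all large $k$, making the lemma vacuous. You repaired it to $s_k^\alpha\le\beta(s_{k-1}-s_k)$, with the power on the \emph{later} term; but the repair that matches the stated conclusion (i) --- and the version actually proved in \cite{aragon18} --- is $s_{k-1}^\alpha\le\beta(s_{k-1}-s_k)$, with the power on the \emph{earlier} term. This matters in two ways. In (i), your rearrangement gives $(1+\beta)s_k\le\beta s_{k-1}$, i.e.\ the contraction factor $\beta/(1+\beta)=1-1/(1+\beta)$, not the stated rank $1-1/\beta$; so ``exactly the claimed linear convergence'' overstates what you obtain (linear convergence, yes; that particular constant, no). Under the intended indexing, the same one-line argument gives precisely $s_{k+1}\le(1-1/\beta)s_k$. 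In (ii), conversely, your reading is the harder one --- the dichotomy you introduce is exactly the device of \cite{attouch09} and is genuinely needed for it --- whereas under the intended indexing no dichotomy is required at all: convexity of $s\mapsto s^{-(\alpha-1)}$ at $s_{k-1}$ gives $t_k-t_{k-1}\ge(\alpha-1)s_{k-1}^{-\alpha}(s_{k-1}-s_k)\ge(\alpha-1)/\beta$ in one stroke. Finally, note that eventual monotonicity gives $s_k^\alpha\le s_{k-1}^\alpha$, so your hypothesis is the weaker of the two readings; hence your proof of (ii) in fact covers the intended statement as well, and only the constant in (i) requires either the correct indexing or a correspondingly weakened claim.
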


Here is the aforementioned theorem about the convergence rates.

\begin{Theorem}\label{general rate}
Let the sequences $\set{x^k}\subset\R^n$, $\set{t_k}\subset\R_+$ and the numbers $\beta>0,\;c>0$ be such that $x^{k+1}\ne x^k$ for all $k\in\N$, and that we have
\begin{align}\label{two conditions}
f(x^k)-f(x^{k+1})\ge \dfrac{\beta}{t_k}\norm{x^{k+1}-x^k}^2\;\text{ and }\;\norm{\nabla f(x^k)}\le\dfrac{c}{t_k}\norm{x^{k+1}-x^k}
\end{align}
for sufficiently large $k\in\N$. Suppose that the sequence $\set{t_k}$ is bounded away from $0$, that $\bar x$ is an accumulation point of $\set{x^k}$, and that $f$ satisfies the KL property at $\bar x$ with $\psi(t)=Mt^{q}$ for some $M>0$ and $q\in(0,1)$. The following convergence rates are guaranteed:
\begin{itemize}\vspace*{-0.05in}
\item[\bf(i)] If $q\in(0,1/2]$, then the sequence $\set{x^k}$ converges linearly to $\bar x$.\vspace*{-0.05in}
\item[\bf(ii)]If $q\in(1/2,1)$, then there exists a positive constant $\varrho$ such that 
\begin{align*}
\norm{x^k-\bar x}\le \varrho k^{-\frac{1-q}{2q-1}}\;\text{ for sufficiently large }\;k\in\N.
\end{align*}
\end{itemize}
\end{Theorem}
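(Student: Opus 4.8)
The plan is to first upgrade the two inequalities in \eqref{two conditions} into the hypotheses (H1)--(H2) of Proposition~\ref{general convergence under KL} so as to secure the convergence $x^k\to\bar x$, and then to run an Attouch--Bolte-type tail-sum argument to extract the rate. For (H1), I would combine the two estimates in \eqref{two conditions}: the second one gives $\norm{x^{k+1}-x^k}\ge\frac{t_k}{c}\norm{\nabla f(x^k)}$, and substituting into the first yields $f(x^k)-f(x^{k+1})\ge\frac{\beta}{c}\norm{\nabla f(x^k)}\cdot\norm{x^{k+1}-x^k}$, which is (H1) with $\sigma=\beta/c$. For (H2), the first inequality in \eqref{two conditions} forces $\norm{x^{k+1}-x^k}=0$ whenever $f(x^{k+1})=f(x^k)$. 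Proposition~\ref{general convergence under KL} then yields $x^k\to\bar x$; moreover the first inequality in \eqref{two conditions} makes $\set{f(x^k)}$ nonincreasing, so by continuity $r_k:=f(x^k)-f(\bar x)\downarrow 0$. Note that the standing assumption $x^{k+1}\ne x^k$ together with (H2) rules out $r_k=0$: if $r_{k_0}=0$ then $f$ would be constant from $k_0$ on, forcing $x^{k+1}=x^k$, a contradiction; hence $r_k>0$ for all $k$ and $0<r_k<\eta$ for large $k$, so the KL inequality \eqref{kl absil} applies along the tail.

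Next I would set up two tail estimates in terms of $\Delta_k:=\sum_{j\ge k}\norm{x^{j+1}-x^j}$, recalling that $\norm{x^k-\bar x}\le\Delta_k$ since $\bar x=\lim_j x^j$. The first estimate is exactly the one behind \eqref{kl induction}: from (H1) and the KL bound $\norm{\nabla f(x^k)}\ge M r_k^q$ I get $\norm{x^{k+1}-x^k}\le\frac{r_k-r_{k+1}}{\sigma M r_k^q}$, and concavity of $t\mapsto t^{1-q}$ upgrades this to $\norm{x^{k+1}-x^k}\le\frac{1}{\sigma M(1-q)}(r_k^{1-q}-r_{k+1}^{1-q})$; summing over the tail and using $r_j\to 0$ telescopes to $\Delta_k\le\frac{1}{\sigma M(1-q)}r_k^{1-q}$. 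The second estimate uses that $\set{t_k}$ is bounded below by some $\underline t>0$, so the second inequality in \eqref{two conditions} gives $\norm{\nabla f(x^k)}\le\frac{c}{\underline t}\norm{x^{k+1}-x^k}$; combined with the KL bound this yields $r_k^q\le\frac{c}{M\underline t}(\Delta_k-\Delta_{k+1})$, where $\Delta_k-\Delta_{k+1}=\norm{x^{k+1}-x^k}$.

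The crux is to merge these into a single recursion amenable to Lemma~\ref{lemma KL}. Choosing the exponent $\alpha:=\frac{q}{1-q}$ makes the $r_k$-powers cancel: raising the first estimate to the power $\alpha$ gives $\Delta_k^\alpha\le C_1 r_k^{(1-q)\alpha}=C_1 r_k^{q}$, and feeding in the second estimate produces $\Delta_k^\alpha\le\beta(\Delta_k-\Delta_{k+1})$ for a constant $\beta>0$ and all large $k$. Applying Lemma~\ref{lemma KL} to $s_k:=\Delta_{k+1}$ (legitimate because $\Delta_{k+1}^\alpha\le\Delta_k^\alpha\le\beta(\Delta_k-\Delta_{k+1})$ and $\Delta_k\to 0$) then finishes the argument: for $q\in(0,1/2]$ one has $\alpha\in(0,1]$ and Lemma~\ref{lemma KL}(i) gives linear decay of $\Delta_{k+1}$, hence of $\norm{x^k-\bar x}\le\Delta_k$, proving (i); for $q\in(1/2,1)$ one has $\alpha>1$ and Lemma~\ref{lemma KL}(ii) gives $\Delta_{k+1}\le\varrho k^{-1/(\alpha-1)}$ with $\frac{1}{\alpha-1}=\frac{1-q}{2q-1}$, so that $\norm{x^k-\bar x}\le\Delta_k\le\varrho' k^{-\frac{1-q}{2q-1}}$, proving (ii). I expect the main obstacle to be precisely this exponent bookkeeping: one must pick $\alpha=q/(1-q)$ so that the tail bound $\Delta_k\lesssim r_k^{1-q}$ and the step bound $r_k^q\lesssim\Delta_k-\Delta_{k+1}$ chain together with the $r_k$ factors eliminated, and then shift the index correctly so that the resulting inequality matches the recursive hypothesis of Lemma~\ref{lemma KL} (with the decreasing difference $s_{k-1}-s_k$ on the right-hand side).
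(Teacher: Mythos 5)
Your proposal is correct and follows essentially the same route as the paper's proof: verify (H1)--(H2) with $\sigma=\beta/c$ to get $x^k\to\bar x$ via Proposition~\ref{general convergence under KL}, bound the tail sums $\Delta_k$ (the paper's $s_i$) by $(f(x^k)-f(\bar x))^{1-q}$ using the KL-induction estimate \eqref{kl induction}, chain this with the stepsize lower bound and the KL inequality to obtain $\Delta_k^{\alpha}\le\beta(\Delta_k-\Delta_{k+1})$ with $\alpha=q/(1-q)$, and finish with Lemma~\ref{lemma KL}. In fact your explicit index shift $s_k:=\Delta_{k+1}$ when invoking Lemma~\ref{lemma KL} is handled more carefully than in the paper, which states the recursion with a misplaced index.
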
\vspace*{-0.15in}
\begin{proof}
The conditions imposed in (\ref{two conditions}) imply that
\begin{align*}
f(x^k)-f(x^{k+1})\ge \dfrac{\beta}{c}\norm{\nabla f(x^k)}\cdot\norm{x^{k+1}-x^k}\;\text{ for sufficiently large }\;k\in\N,
\end{align*}
which ensures that assumption (H1) in Proposition~\ref{general convergence under KL} holds with $\sigma:=\dfrac{\beta}{c}$. By
$t_k>0$ for all $k\in\N$, the first condition in (\ref{two conditions}) yields also assumption (H2) of this proposition. Let $k_1\in\N$ be such that \eqref{kl induction} is satisfied for all $k\ge k_1$, which implies in turn inequality \eqref{sum series cauchy} and thus the convergence of $\set{x^k}$ to $\ox$. Moreover, the first condition in (\ref{two conditions}) together with $x^{k+1}\ne x^k$ as $k\in\N$ implies that 
\begin{align}\label{f(xk) converges}
f(x^k)\downarrow f(\bar x):=\bar f\;\text{ and }\;f(x^k)>\bar f\;\text{ for all }\;k\in\N.
\end{align}
It follows from (\ref{sum series cauchy}) and the triangle inequality that
\begin{align*}
\norm{x^i-\bar x}\le\sum_{k=i}^\infty\norm{x^{k+1}-x^k}=:s_i<\infty\;\text{ whenever }\;i\in\N.
\end{align*}
Therefore, the convergence rate of $x^i$ to $\bar x$ can be deduced from the convergence rate of $s_i$ to $0$. Let $k_1$ be defined in (\ref{kl induction}) and pick $i\ge k_1$. Summing up (\ref{kl induction}) from $i$ to any $p>i$ gives us
\begin{align*}
\sum_{k=i}^{p}\norm{x^{k+1}-x^k}\le\dfrac{1}{\sigma M(1-q)}\big(f(x^i)-\bar f\big)^{1-q}.
\end{align*}
Letting $p\rightarrow\infty$, we get the estimate
\begin{align}\label{sikm}
s_i\le\dfrac{1}{\sigma M(1-q )}\big(f(x^i)-\bar f\big)^{1-q}\;\text{ for all }\;i\ge k_1.
\end{align}
The KL property of $f$ at $\bar x$ with $\psi(t)=Mt^{q }$ allows us to find a positive number $\eta$ and a neighborhood $U$ of $\bar x$ ensuring the condition
\begin{align}\label{KL M gradient}
\norm{\nabla f(x)}\ge M\big(f(x)-\bar f\big)^q\;\mbox{ whenever }\;x\in U,\;\bar f<f(x)<\bar f+\eta.
\end{align} 
Using (\ref{KL M gradient}), $x^k\rightarrow\bar x$, and (\ref{f(xk) converges}) gives us a natural number $k_2>k_1$ such that
\begin{align}\label{M(1-theta)}
\norm{\nabla f(x^i)}\ge M\big(f(x^i)-\bar f\big)^q\;\text{ for all }\;i\ge k_2.
\end{align}
Let $\bar{t}>0$ be a lower bound of $\set{t_k}$. Combining this with (\ref{M(1-theta)}) and the second inequality in (\ref{two conditions}), we have the estimates
\begin{align}
\big(f(x^i)-\bar f\big)^q &\le\dfrac{1}{M}\norm{\nabla f(x^i)}\le\dfrac{c}{Mt_i}\norm{x^{i+1}-x^i}\nonumber\\
&\le\dfrac{c}{M\bar{t}}\norm{x^{i+1}-x^i}\;\text{ for all }\;i\ge k_2.\label{3.220}
\end{align}
Involving further (\ref{sikm}) and (\ref{3.220}) ensures that
\begin{align*}
s_i^{\frac{q}{1-q }}&\le\brac{\dfrac{1}{\sigma M(1-q)}}^{\frac{q }{1-q}}\big(f(x^i)-\bar f\big)^q\\
&\le\brac{\dfrac{1}{\sigma M(1-q)}}^{\frac{q }{1-q}}\dfrac{c}{M\bar{t}}\norm{x^{i+1}-x^i}.
\end{align*}
Define $\alpha:=\dfrac{q }{1-q }$ and $\beta:=\brac{\dfrac{1}{\sigma M(1-q )}}^{\frac{q }{1-q }}\dfrac{c}{M\bar{t}}$, and then rewrite the latter estimate as
\begin{align*}
s_i^\alpha\le\beta(s_i-s_{i-1})\;\text{ for all }\;i\ge k_2.
\end{align*}
Applying finally Lemma~\ref{lemma KL}, we verify the convergence rates in (i), (ii), and (iii). 
\end{proof}

By employing the iterative procedure $x^{k+1}=x^k+t_kd^k,$ the conditions in \eqref{two conditions} can be rewritten as the following estimates: 
\begin{align*}
f(x^k)-f(x^{k+1})\ge \beta t_k\norm{d^k}^2\;\text{ and }\;\norm{\nabla f(x^k)}\le c \norm{d^k}.
\end{align*}

Note  that, in addition to the IRG methods as shown in Section~\ref{sec 5},  these conditions are also satisfied for the generalized damped Newton method as proved in \cite[Theorem~4.2]{kmpt21}.\vspace*{-0.15in}

\section{General Scheme for Inexact Reduced Gradient Methods}\label{sec Stationarity of accumulation points}\vspace*{-0.05in}

In this section, we design a general framework for our novel IRG methods and establish their basic properties prior to constructing particular methods of this type with various stepsize selections. As mentioned in Section~\ref{sec:intro}, the motivation to design  the IRG methods comes from the gradient sampling (GS) method by Burke et al. \cite{blo02} and further developments in Kiwiel \cite[Section~4.1]{kiwiel07} for unconstrained problems of nonsmooth optimization.

Although the focus of this paper is on smooth optimization problems, we first recall the {\em perturbed} notions of generalized differentiation for nonsmooth functions and their smooth specifications employed in \cite{blo02,kiwiel07} as well as different ones used in our algorithms. In what follows, the notation $\B(\ox,\ve)$ stands for the closed ball centered at $\ox\in\R^n$ with radius $\ve>0$, by ${\rm co}\,\O$ we understand the convex hull of a set $\O\subset\R^n$, and the symbol $x\st{\O}{\to}\ox$ indicates that $x\to\ox$ with $x\in\O$.\vspace*{0.05in} 

Let $f\colon\R^n\to\R$ be a real-valued function locally Lipschitzian around some point $\ox\in\R^n$. The generalized differential notion employed in \cite{blo02,kiwiel07}
is introduced by Goldstein \cite{gol77} under the name of the $\ve$-{\em generalized gradient} of $f$ at $\ox$. It is defined by
\begin{equation}\label{egg}
\Bar\partial_\varepsilon f(\bar x):={\rm co}\,\Bar\partial f(\mathbb{B}\big(\bar x,\varepsilon)\big),\quad\ve>0,
\end{equation}
via the convex hull of the (Clarke) {\em generalized gradient}
\begin{equation}\label{cgg}
\Bar\partial f(\bar x):={\rm co}\Big\{\lim_{x\overset{\Omega}{\rightarrow}\bar x}\nabla f(x)\Big\},
\end{equation}
where $\Omega$ denotes the set of full measure on which $f$ is differentiable; see \cite{clarkebook,rockafellarbook} for more details. If $f$ is ${\cal C}^1$-smooth around $\ox$, then $\Bar\partial f(\ox)$ in \eqref{cgg} reduces to the gradient $\{\nabla f(\ox)\}$, and hence the $\ve$-generalized gradient \eqref{egg} is represented by
\begin{equation}\label{egg-smooth}
\Bar\partial_\ve f(\bar x)={\rm co}\,\nabla f\big(\mathbb{B}(\bar x,\varepsilon)\big),\quad
\ve>0. 
\end{equation}

Prior to introducing our IRG methods, let us briefly describe the GS scheme based on the $\ve$-generalized gradient in the smooth case \eqref{egg-smooth}. Recall that the 
{\em projection} ${\rm Proj}(x,\O)$ of $x\in\R^n$ onto a closed and convex set $\O\subset\R^n$ is a singleton that is characterized by 
\begin{align}\label{proj}
u={\rm Proj}(x,\O)\;\text{ if and only if $u\in\O$ and  }\;\dotproduct{y-u,x-u}\le 0\;\text{ for all }\;y\in\O.
\end{align}
Given a ${\cal C}^1$-smooth function $f\colon\R^n\to\R$, an initial point $x^1$, initial radii $\varepsilon_1,r_1$, and a natural number $m\ge n+1$, the GS method approximately calculates the $\varepsilon_k$-generalized gradient of $f$ at the $k^{\rm th}$ iteration $x^k$ by taking the convex hull of the gradients
\begin{align}\label{Gk}
\partial_{\varepsilon_k}f(x^k)\approx{\rm co}\set{\nabla f(x^k),\nabla f(x^{k,1}),\nabla f(x^{k,2}),\ldots, \nabla f(x^{k,m})}=:\mathcal{G}_k,
\end{align}
where the points $x^{k,i}$ with $i=1,\ldots,m$ are chosen uniformly in $\mathbb{B}(x^k,\varepsilon_k)$. If $\mathcal{G}_k\cap\mathbb{B}(0,r_k)\ne\emp$, then both $\varepsilon_k$ and $r_k$ are reduced in the sense that $\ve_{k+1}=\mu\ve_k$ and $r_{k+1}=\theta r_k$ for some $\mu,\th\in(0,1)$, 
while the reference point $x^k$ stays the same. Otherwise, the new iterative point $x^{k+1}$ is updated by the procedure $x^{k+1}=x^k+t_kd^k$, where $-d^k={\rm Proj}(0,\mathcal{G}_k)$, and where $t_k$ is given by the backtracking linesearch. This procedure is illustrated on Figure~\ref{fig:GS} for the case where $n=2$ and $m=3$. 
\begin{figure}[H]
\centering
\includegraphics[scale=0.3]{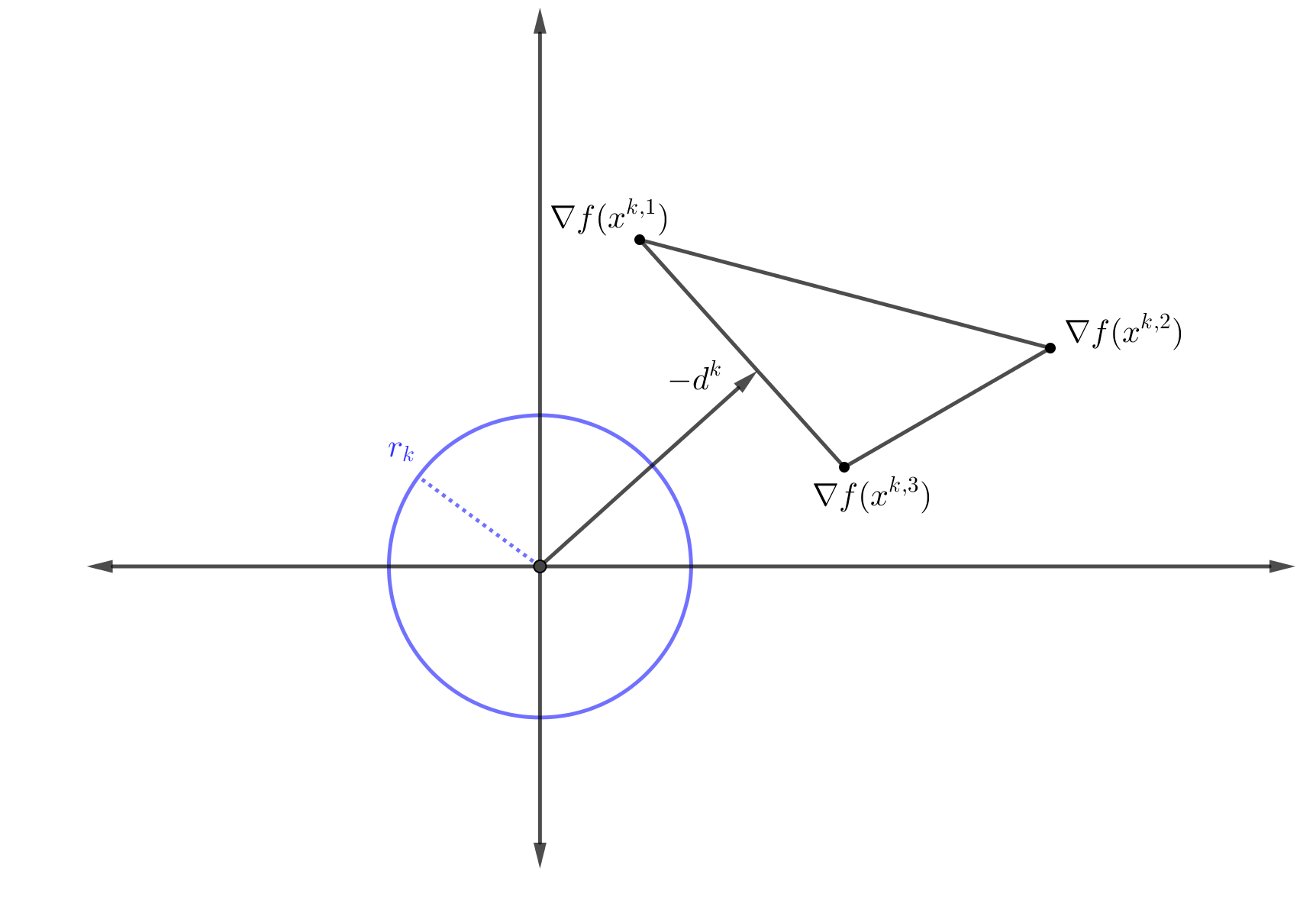}
\caption{Illustration for the GS method when $x^k$ is updated}
\label{fig:GS}
\end{figure}

In contrast to \cite{blo02,kiwiel07}, in our IRG methods proposed in this paper, we use another subdifferential construction for locally Lipschitzian functions $f$ at $\ox$ defined as follows
\begin{equation}\label{ef}
\widehat{\partial}_\varepsilon f(\bar x):=\set{v\in\R^n\;\Big|\;\liminf_{x\rightarrow\bar x}\dfrac{f(x)-f(\bar x)-\la v,x-\bar x\ra}{\norm{x-\bar x}}\ge-\varepsilon},\quad\ve\ge 0.
\end{equation}
This construction is known in variational analysis and optimization as the (Fr\'echet) $\ve$-{\em subdifferential} of $f$ at $\ox$. When $\ve=0$, \eqref{ef} reduces to the {\em regular subdifferential} $\Hat\partial f(\ox)$; see, e.g., \cite{mordukhovich06,mordukhovich18,rockafellarbook} and the references therein. For $\ve>0$, the $\ve$-subdifferential goes back to the early work by Kruger and Mordukhovich, where it was used via a limiting procedure to adequately extend the notion of the (Mordukhovich) {\em limiting subdifferential} $\partial f(\ox)$ from $\R^n$ to {\em Banach} spaces; see \cite{mordukhovich06,mordukhovich18} with more discussions and references. In this paper, we show that \eqref{ef} for $\ve>0$ is useful to design and justify {\em numerical algorithms} for {\em smooth} problems in {\em finite dimensions}.\vspace*{0.05in} 

The next proposition gives us the expression of \eqref{ef} for differentiable functions.

\begin{Proposition}
Let $f:\R^n\rightarrow\R$ be differentiable at $\bar x\in\R^n$, and let $\varepsilon>0$. Then we have 
\begin{equation}\label{ef-smooth}
\widehat{\partial}_\varepsilon f(\bar x)=\B\big(\nabla f(\bar x),\varepsilon\big).
\end{equation}
\end{Proposition}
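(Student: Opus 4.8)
The plan is to prove the two set inclusions $\widehat{\partial}_\varepsilon f(\bar x)\subseteq\B\big(\nabla f(\bar x),\varepsilon\big)$ and $\B\big(\nabla f(\bar x),\varepsilon\big)\subseteq\widehat{\partial}_\varepsilon f(\bar x)$ separately, using the differentiability of $f$ at $\bar x$ to rewrite the defining $\liminf$ in \eqref{ef} in terms of $\nabla f(\bar x)$. The central observation is that differentiability means $f(x)-f(\bar x)=\la\nabla f(\bar x),x-\bar x\ra+o(\norm{x-\bar x})$, so for any $v\in\R^n$ the quotient inside \eqref{ef} becomes
\begin{align*}
\dfrac{f(x)-f(\bar x)-\la v,x-\bar x\ra}{\norm{x-\bar x}}=\dfrac{\la\nabla f(\bar x)-v,x-\bar x\ra}{\norm{x-\bar x}}+\dfrac{o(\norm{x-\bar x})}{\norm{x-\bar x}}.
\end{align*}
Since the error term vanishes as $x\to\bar x$, the $\liminf$ of the left-hand side equals the $\liminf$ of the first term, and the whole problem reduces to analyzing the behavior of the directional quotient $\la\nabla f(\bar x)-v,x-\bar x\ra/\norm{x-\bar x}$.

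First I would handle this reduced quotient by setting $w:=\nabla f(\bar x)-v$ and writing $x-\bar x=\norm{x-\bar x}\,u$ with $u$ a unit vector, so that the quotient equals $\la w,u\ra$. As $x$ ranges over points approaching $\bar x$, the direction $u$ ranges over (a dense subset of, and in the limit all of) the unit sphere, and hence
\begin{align*}
\liminf_{x\rightarrow\bar x}\dfrac{\la w,x-\bar x\ra}{\norm{x-\bar x}}=\min_{\norm{u}=1}\la w,u\ra=-\norm{w},
\end{align*}
the minimum being attained at $u=-w/\norm{w}$ when $w\ne 0$ (and trivially $0$ when $w=0$). This identity is the computational heart of the argument.

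Combining the two displays, for any $v\in\R^n$ the $\liminf$ in \eqref{ef} equals $-\norm{\nabla f(\bar x)-v}$. Therefore the membership condition $v\in\widehat{\partial}_\varepsilon f(\bar x)$, namely that this $\liminf$ be $\ge-\varepsilon$, is exactly equivalent to $-\norm{\nabla f(\bar x)-v}\ge-\varepsilon$, i.e. $\norm{\nabla f(\bar x)-v}\le\varepsilon$, which is precisely the statement that $v\in\B\big(\nabla f(\bar x),\varepsilon\big)$. This equivalence gives both inclusions at once and completes the proof.

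I expect the only delicate point to be the rigorous justification that the error term $o(\norm{x-\bar x})/\norm{x-\bar x}$ does not affect the $\liminf$ and that the infimum over directions is genuinely achieved in the limit. For the latter I would, when $w\ne 0$, exhibit an explicit approaching sequence $x^j:=\bar x-\tfrac1j\,w/\norm{w}$ realizing the value $-\norm{w}$, and pair it with the elementary bound $\la w,u\ra\ge-\norm{w}$ valid for every unit $u$ by Cauchy--Schwarz; this two-sided estimate pins down the $\liminf$ exactly. None of these steps requires more than differentiability at the single point $\bar x$, so no continuity or Lipschitz hypothesis on $\nabla f$ is needed.
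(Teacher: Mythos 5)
Your proof is correct and uses the same ingredients as the paper's: the differentiability-based reduction to the quotient $\la\nabla f(\bar x)-v,x-\bar x\ra/\norm{x-\bar x}$, the Cauchy--Schwarz lower bound giving one inclusion, and a test sequence in the direction $-w/\norm{w}$ giving the other. The only difference is organizational: you compute the liminf exactly as $-\norm{\nabla f(\bar x)-v}$ and read off both inclusions from a single equivalence, whereas the paper proves the two inclusions separately, using for the reverse inclusion the sequences $\bar x\pm\frac{1}{k}e$ over all unit vectors $e$ rather than the single optimal direction.
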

\begin{proof}
Since $f$ is differentiable at $\bar x$, we have 
\begin{align}\label{diff frechet}
\lim_{x\rightarrow\bar x}\dfrac{f(x)-f(\bar x)-\dotproduct{\nabla f(\bar x),x-\bar x}}{\norm{x-\bar x}}=0.
\end{align}
Take any $v\in\mathbb{B}(\nabla f(\bar x),\varepsilon)$ and write $v=\nabla f(\bar x)+h$ with $\norm{h}\le \varepsilon$. It follows from (\ref{diff frechet}) that \begin{align*}
\liminf_{x\rightarrow\bar x}\dfrac{f(x)-f(\bar x)-\dotproduct{\nabla f(\bar x)+h,x-\bar x}}{\norm{x-\bar x}}&=\liminf_{x\rightarrow\bar x}\dfrac{-\dotproduct{h,x-\bar x}}{\norm{x-\bar x}}\\
&\ge \liminf_{x\rightarrow\bar x}\dfrac{-\norm{h}\cdot\norm{x-\bar x}}{\norm{x-\bar x}}\\ &=-\norm{h}\ge -\varepsilon.
\end{align*}
This means that $v\in\widehat{\partial}_\varepsilon f(\bar x)$, and so $\widehat{\partial}_\varepsilon f(\bar x)\supset\mathbb{B}(\nabla f(\bar x),\varepsilon)$. To verify the converse implication, pick any $v\in\widehat{\partial}_\varepsilon f(\bar x)$ meaning that 
\begin{align*}
\liminf_{x\rightarrow\bar x}\dfrac{f(x)-f(\bar x)-\dotproduct{v,x-\bar x}}{\norm{x-\bar x}}\ge-\varepsilon
\end{align*}
and deduce from the latter and \eqref{diff frechet} that 
\begin{align}\label{17}
\liminf_{x\rightarrow\bar x}\dfrac{\dotproduct{\nabla f(\bar x)-v,x-\bar x}}{\norm{x-\bar x}}\ge -\varepsilon.
\end{align}
Then for any $e\in\mathbb{B}(0,1)$ we get from \eqref{17} and $\bar x\pm \frac{1}{k}e\rightarrow\bar x$ as $k\rightarrow\infty$ that
\begin{align*}
\lim_{k\rightarrow\infty}\dfrac{\dotproduct{\nabla f(\bar x)-v,\frac{1}{k}e}}{\norm{\frac{1}{k}e}}\ge -\varepsilon\quad\text{ and }\quad \lim_{k\rightarrow\infty}\dfrac{\dotproduct{\nabla f(\bar x)-v,-\frac{1}{k}e}}{\norm{-\frac{1}{k}e}}\ge -\varepsilon,
\end{align*}
which implies in turn that $\abs{\dotproduct{\nabla f(\bar x)-v,e}}\le \varepsilon$. Since this holds for all $e\in\mathbb{B}(0,1)$, it follows that $\norm{\nabla f(\bar x)-v}\le\varepsilon$, i.e., $v\in \mathbb{B}(\nabla f(\bar x),\varepsilon)$, which verifies the claimed representation \eqref{ef-smooth}.
\end{proof}

The following table summarizes the main differences between the $\ve$-generalized gradient in \eqref{egg-smooth} and $\ve$-subdifferential in \eqref{ef-smooth} for smooth functions. 

\begin{table}[H]
\centering
\begin{tabular}{ |l|l|l| } 
\hline
&\multicolumn{1}{c|}{$\Bar\partial_\varepsilon f(\bar x)={\rm co}\,\nabla f\big(\mathbb{B}(\bar x,\varepsilon)\big)$} & \multicolumn{1}{c|}{$\widehat{\partial}_\varepsilon f(\bar x)=\mathbb{B}\big(\nabla f(\bar x),\varepsilon\big)$} \\
\hline
How to calculate? &$\approx{\rm co}\set{\nabla f(x^k)\;\big|\;x^k\in\mathbb{B}(\bar x,\varepsilon),\;k\in\{1,\ldots,m\},\;m\ge n+1}$&has explicit form \\
\hline
How to find projection?&$\approx$ projection onto the convex hull above& has explicit form
 \\
 \hline 
\end{tabular}
\caption{Comparison between the $\varepsilon$-generalized gradient and the $\ve$-subdifferential for smooth functions}
\label{compare}
\end{table}

We can see from Table~\ref{compare} that the numerical implementation of algorithms involving $\Hat\partial_\ve f$ is {\em much easier} in comparison with the GS method using $\Bar\partial_\ve f$ for smooth functions. One more advantage of algorithms implementing the $\ve$-subdifferential \eqref{ef} in comparison with the usage of $\ve$-generalized gradient \eqref{egg} is that, for nonsmooth functions, algorithms based on \eqref{egg} lead us to {\em Clarke stationary} points satisfying $0\in\Bar\partial f(\ox)$ in terms of the generalized gradient \eqref{cgg}. On the other hand, the usage of \eqref{ef} brings us to the more subtle {\em Mordukhovich stationary} $0\in\partial f(\ox)$ due to the representation
\begin{equation*}
\partial f(\ox)=\big\{v\in\R^n\;\big|\;\exists\,\ve_k\dn 0,\;x_k\to\ox,\;v_k\to v\;\mbox{ with }\;v_k\in\Hat\partial_{\ve_k}f(x_k)\big\}
\end{equation*}
of the limiting subdifferential, which works even in Banach spaces with the appropriate convergence of $v_k\to v$; cf.\ \cite[Theorem~1.28]{mordukhovich18} and \cite[Theorem~1.89]{mordukhovich06}. Since we have
\begin{equation*}
\Bar\partial f(\ox)={\rm co}\,\partial f(\ox)   
\end{equation*}
when $f\colon\R^n\to\R$ is locally Lipschitzian around $\ox$, the condition $0\in\partial f(\ox)$ is more selective than that of $0\in\Bar\partial f(\ox)$ to find stationary points and minimizers of Lipschitzian functions and will be investigated in our future development of IRG algorithms for nonsmooth optimization problems.\vspace*{0.1in}

Now we are ready to formulate our {\em general algorithmic framework} (the {\em Master Algorithm}) for IRG methods with the usage of the $\ve$-subdifferential representation \eqref{ef-smooth} for smooth functions while without considering yet particular stepsize selections.

 \begin{Algorithm}[\bf general framework for IRG methods]\rm \label{al gd inexact}
\quad
\setcounter{Step}{-1}
\begin{Step}[initialization]\rm Select an initial point $x^1\in \R^n$, initial radii $\varepsilon_1,r_1> 0$, radius reduction factors $\mu,\theta\in(0,1)$, a sequence of manually controlled errors $\set{\rho_k}\subset \R_+.$
\end{Step}
\begin{Step}[inexact gradient and stopping criterion]\rm \label{Step 1}
Choose $g^k$ such that 
\begin{align}\label{calculate approx grad}
\norm{g^k-\nabla f(x^k)}\le \min\set{\varepsilon_k,\rho_k}.
\end{align}
\end{Step}
\begin{Step}[radius update]\rm \label{Step 2}
If $\norm{g^k}\le r_{k}+\varepsilon_k$, then set $r_{k+1}:=\mu r_k,\;\varepsilon_{k+1}:=\theta\varepsilon_k$, $d^k:=0$, and go to~Step~3. Otherwise, set $r_{k+1}:=r_k$, $\varepsilon_{k+1}:=\varepsilon_k$, and 
\begin{align}\label{choose dk}
d^k:=-\dfrac{\norm{g^k}-\varepsilon_k}{\norm{g^k}}g^k.
\end{align}
\end{Step}
\begin{Step}[stepsize]\rm\label{Step: 3}
Choose $t_k>0$ by a specific rule.
\end{Step}
\begin{Step}[iteration update]\rm\label{step 4}
Set $x^{k+1}:=x^{k}+t_kd^k$.
\end{Step}
\begin{Step}\rm
Increase $k$ by $1$ and go back to Step $1.$
\end{Step}
\end{Algorithm}

Let us make some comments on the constructions of Algorithm~\ref{al gd inexact}. The first remark concerns the novelty in the choice of errors and directions.

\begin{Remark}\label{remark intu}\rm We have the following observations:\vspace*{-0.05in}
\begin{enumerate}[\bf(i)]

\item The selection of $\set{\rho_k}$ satisfying $\rho_k\rightarrow0$ as $k\rightarrow\infty$ plays a vital role in the convergence analysis of the IRG method with backtracking stepsize as shown later. In the remaining cases, since the presence of $\rho_k$ is not necessary, we specify $\rho_k:=\varepsilon_k$ for all $k\in\N$, which leads us to a simplified form of the condition \eqref{calculate approx grad} as follows:
 \begin{align}\label{IRG condition constant}
\norm{g^k-\nabla f(x^k)}\le \varepsilon_k.
\end{align}\vspace*{-0.2in}

\item From \eqref{calculate approx grad}, the radius $\varepsilon_k$ can be considered as an automatically controlled error for the calculation of $\nabla f(x^k)$, which does not need to decrease after each iteration. This is different than  the choice $\varepsilon_k=ck^{-p}$ for $p\ge 1,c>0$ frequently used in the well-known methods \cite{bertsekasbook,nesterov14,gilmore95}. Moreover, Steps~\ref{Step 1} and \ref{Step 2} also show that $\norm{\nabla f(x^k)}\le r_k+2\varepsilon_k$ when $\varepsilon_k$ is reduced. Therefore, we can conclude intuitively that $\norm{\nabla f(x^k)}$ is decreasing when $\varepsilon_k$ is decreasing.\vspace*{-0.05in}

\item In the {\em exact case} when $g^k=\nabla f(x^k)$ for all $k\in\N$, we label our methods as  the {\em reduced gradient} (RG) ones, which are different from the standard gradient descent method. Indeed, it follows from Step~\ref{Step 2} that $d^k$ is either $0$, or is given by 
\begin{align}\label{reduced gradient}
d^k=-\Big(\dfrac{\|\nabla f(x^k)\|-\varepsilon_k}{\|\nabla f(x^k)\|}\Big)\nabla f(x^k).
\end{align}
Therefore, the vector $d^k$ in (\ref{reduced gradient}) has the same direction as $-\nabla f(x^k)$ but its length is $\norm{\nabla f(x^k)}$ reduced by $\varepsilon_k$ for each $k\in\N$.
\end{enumerate}
\end{Remark}\medskip
Further we discuss and illustrate behavior of Algorithm~\ref{al gd inexact} at the major steps of iterations.

\begin{Remark}\rm\quad\label{remark 1} Notice first that:
\begin{itemize}\vspace*{-0.05in}

\item[\bf(i)] If $d^k\ne 0$, it follows from (\ref{choose dk}) and the definition of projections that $d^k=-{\rm Proj}(0,\mathbb{B}(g^k,\varepsilon_k))$.\vspace*{-0.05in}

\item[\bf(ii)] An illustration for Algorithm~\ref{al gd inexact} can be seen in Figure~\ref{fig:1}.

Let $g^1$ be an approximate gradient of $\nabla f(x^1)$ at the $1$st iteration. Then Figure~\ref{fig:1} shows that the two balls $\mathbb{B}(g^1,\varepsilon_1)$ and $\mathbb{B}(0,r_1)$ do not intersect. This means by Step~\ref{Step 2} that $r_2=r_1$, $\varepsilon_2=\varepsilon_1$, and $d^1=-{\rm Proj}(0,\mathbb{B}(g^1,\varepsilon_1))$. Then we have a new point $x^2=x^1+t_1d^1$ after choosing the stepsize $t_1>0$ as in Step~\ref{Step: 3} and Step~\ref{step 4}.

At the $2$nd iteration, it can be seen in Figure~\ref{fig:1} that the two balls $\mathbb{B}(g^2,\varepsilon_2)$ and $\mathbb{B}(0,r_2)$ intersect each other. Thus by Step~\ref{Step 2} of Algorithm~\ref{al gd inexact}, the radii $r_2,\varepsilon_2$ are reduced to $r_3=\mu r_2$ and $\varepsilon_3=\theta\varepsilon_2$, while the direction $d^2$ is zero. The latter means that the iterative point $x^2$ stays the same, i.e., $x^3=x^2$ from Step~\ref{step 4}.

At the $3$rd iteration, although $\nabla f(x^3)=\nabla f(x^2)$, we still need to recalculate an approximate gradient $g^3$ with a new error $\min\set{\varepsilon_3,\rho_3}$. In this iteration, the two balls $\mathbb{B}(g^3,\varepsilon_3)$ and $\mathbb{B}(0,r_3)$ do not intersect, and hence the procedure is similar to that at the first iteration.
\begin{center}
\begin{figure}[H]
\centering
\includegraphics[scale=0.4]{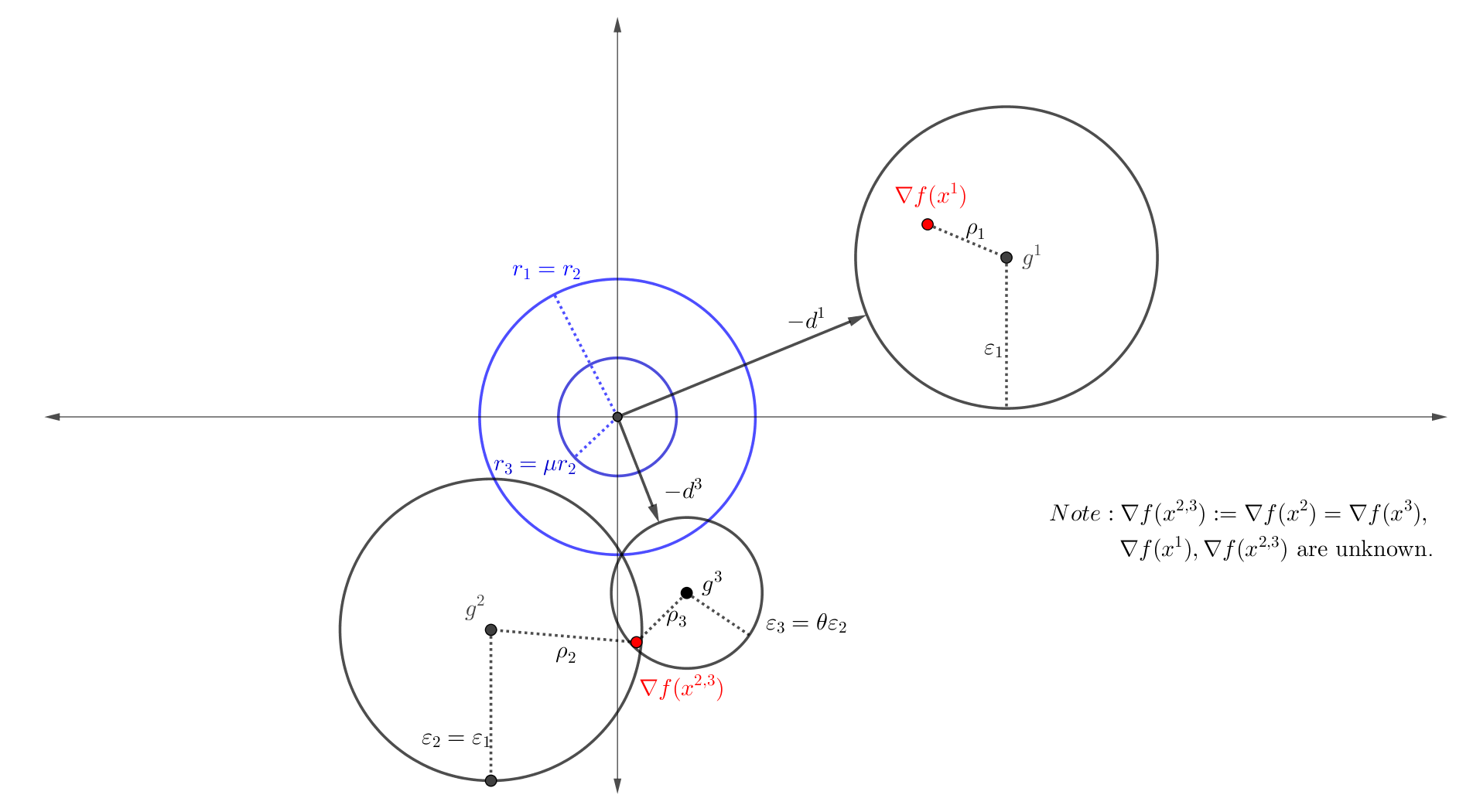}
\caption{An illustration for IRG methods}
\label{fig:1}
\end{figure}
\end{center}
\vspace*{-0.4in}
\item[\bf(iii)] For each $k\in\N$, we have from Step~\ref{Step 2} and Step~\ref{Step: 3} the equivalences
\begin{align}\label{alter condi for null}
x^{k+1}=x^k\Longleftrightarrow d^k=0\Longleftrightarrow r_{k+1}=\mu r_k\Longleftrightarrow\varepsilon_{k+1}=\theta\varepsilon_k\Longleftrightarrow\norm{g^k}\le r_k+\varepsilon_k.
\end{align}
\end{itemize}
\end{Remark}

The next proposition verifies the {\em decent property} of Algorithm~\ref{al gd inexact}.

\begin{Proposition}\label{prop sufi holds}
In Algorithm~{\rm\ref{al gd inexact}}, $\set{d^k}$ satisfies the sufficient descent condition with constant~$1,$ i.e., 
\begin{align}\label{sufi des holds}
\dotproduct{\nabla f(x^k),d^k}\le -\norm{d^k}^2\;\text{ for all }\;k\in\N.
\end{align}
\end{Proposition}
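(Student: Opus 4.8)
The plan is to verify \eqref{sufi des holds} by splitting into the two cases of the direction update in Step~\ref{Step 2}. When $\norm{g^k}\le r_k+\varepsilon_k$, we have $d^k=0$, so both sides of \eqref{sufi des holds} vanish and the inequality $0\le 0$ holds trivially. The substantive case is $d^k\ne 0$, where Step~\ref{Step 2} forces $\norm{g^k}> r_k+\varepsilon_k>\varepsilon_k$ (using $r_k>0$) and $d^k=-\frac{\norm{g^k}-\varepsilon_k}{\norm{g^k}}g^k$ as in \eqref{choose dk}; note that the scalar coefficient $\frac{\norm{g^k}-\varepsilon_k}{\norm{g^k}}$ is then \emph{positive}.

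First I would record the length of the direction. Taking norms in \eqref{choose dk} and using positivity of the coefficient gives $\norm{d^k}=\norm{g^k}-\varepsilon_k$, hence $\norm{d^k}^2=(\norm{g^k}-\varepsilon_k)^2$. Next I would estimate the inner product $\dotproduct{\nabla f(x^k),d^k}$ by passing from the exact gradient to the inexact one through the error bound \eqref{calculate approx grad}, which yields $\norm{g^k-\nabla f(x^k)}\le\varepsilon_k$. Writing $\nabla f(x^k)=g^k+(\nabla f(x^k)-g^k)$ and invoking the Cauchy--Schwarz inequality produces the lower bound $\dotproduct{\nabla f(x^k),g^k}\ge\norm{g^k}^2-\varepsilon_k\norm{g^k}=\norm{g^k}\big(\norm{g^k}-\varepsilon_k\big)$.

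Finally I would substitute. Multiplying the last inequality by the negative factor $-\frac{\norm{g^k}-\varepsilon_k}{\norm{g^k}}$ (which reverses the inequality) and recognizing the left-hand side as $\dotproduct{\nabla f(x^k),d^k}$ gives
\[
\dotproduct{\nabla f(x^k),d^k}\le-\frac{\norm{g^k}-\varepsilon_k}{\norm{g^k}}\,\norm{g^k}\big(\norm{g^k}-\varepsilon_k\big)=-\big(\norm{g^k}-\varepsilon_k\big)^2=-\norm{d^k}^2,
\]
which is exactly \eqref{sufi des holds}. The computation is short, and no hypothesis beyond $r_k,\varepsilon_k>0$ and the error bound \eqref{calculate approx grad} is needed. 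The only point requiring care, and the closest thing to an obstacle here, is bookkeeping of signs: the Cauchy--Schwarz estimate is a \emph{lower} bound on $\dotproduct{\nabla f(x^k),g^k}$, and it must be multiplied by the \emph{negative} direction coefficient so that it turns into the desired \emph{upper} bound on $\dotproduct{\nabla f(x^k),d^k}$.
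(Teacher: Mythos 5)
Your proof is correct, but it takes a genuinely different route from the paper's. The paper's argument is geometric: it observes that $-d^k={\rm Proj}\big(0,\mathbb{B}(g^k,\varepsilon_k)\big)$ and that the error bound \eqref{calculate approx grad} places $\nabla f(x^k)$ inside the ball $\mathbb{B}(g^k,\varepsilon_k)$, then invokes the variational characterization of projections onto convex sets \eqref{proj} with $x=0$, $u=-d^k$, $y=\nabla f(x^k)$, which gives $\dotproduct{0+d^k,\nabla f(x^k)+d^k}\le 0$, i.e., \eqref{sufi des holds}, in one line. You bypass projection theory altogether: you use the explicit formula \eqref{choose dk}, compute $\norm{d^k}=\norm{g^k}-\varepsilon_k$, bound $\dotproduct{\nabla f(x^k),g^k}$ from below by $\norm{g^k}\big(\norm{g^k}-\varepsilon_k\big)$ via Cauchy--Schwarz and the error bound, and then multiply through by the negative coefficient $-\frac{\norm{g^k}-\varepsilon_k}{\norm{g^k}}$; your sign bookkeeping is accurate, and your remark that $\norm{g^k}>\varepsilon_k$ (so the coefficient is positive and $\norm{d^k}=\norm{g^k}-\varepsilon_k$) is exactly the point that needs care. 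What each approach buys: yours is elementary and self-contained, making explicit where the error bound and the positivity of $r_k$ enter, but it is tied to the fact that $\widehat{\partial}_{\varepsilon_k}f(x^k)$ is a ball with an explicitly computable projection. The paper's argument is shorter and more robust: it survives unchanged if $d^k$ is the negative projection of the origin onto \emph{any} closed convex set known to contain $\nabla f(x^k)$ --- precisely the geometry shared with the gradient sampling method that motivates the IRG framework. Both proofs dispatch the null case $d^k=0$ identically.
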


\begin{proof}
Note that (\ref{sufi des holds}) automatically holds if $d^k=0$. Supposing that $d^k\ne 0$ and using the construction of $d^k$ in Step~\ref{Step 2}, we have the expression
\begin{align*}
-d^k={\rm Proj}\big(0,\mathbb{B}(g^k,\varepsilon_k)\big).
\end{align*}
It follows from (\ref{calculate approx grad}) that $\norm{g^k-\nabla f(x^k)}\le \varepsilon_k$, which means that $\nabla f(x^k)\in \mathbb{B}(g^k,\varepsilon_k)$. Invoking the projection description for convex sets \eqref{proj} yields
\begin{align*}
\dotproduct{0+d^k,\nabla f(x^k)+d^k}\le 0,
\end{align*}
which is in turn equivalent to
\begin{align*}
\dotproduct{\nabla f(x^k),d^k}\le-\norm{d^k}^2
\end{align*}
and thus verifies the claim in \eqref{sufi des holds}.
\end{proof}

Now we introduce the notion of null iterations and establish some properties of such iterations related to the IRG methods.

\begin{Definition}\rm\label{nul iter}
The $k^{\rm th}$ iteration of Algorithm~{\rm\ref{al gd inexact}} is called a {\em null iteration} if $x^{k+1}=x^k$. The set of all null iterations is denoted by
$$
\mathcal{N}:=\big\{k\in\N\;\big|\;x^{k+1}=x^k\big\}.
$$
\end{Definition}\vspace*{0.03in}

The next proposition collects important properties of null iterations. 

\begin{Proposition}\label{null properties}
Let $\set{x^k},\set{g^k},\set{d^k},\set{\varepsilon_k}$, and $\set{r_k}$ be sequences generated by Algorithm~{\rm\ref{al gd inexact}}. The following assertions hold:
\begin{itemize}
\item[\bf(i)] $k\in\mathcal{N}$ if and only if either one of the equivalent conditions in \eqref{alter condi for null} holds.\vspace*{-0.05in}

\item[\bf(ii)] $\varepsilon_k\downarrow 0$ if and only if $r_k\downarrow 0$, which is equivalent to the set $\mathcal{N}$ being infinite.

\item[\bf(iii)] If $\mathcal{N}$ is finite, then we have
\begin{align*}
 \norm{g^k}>r_N+\varepsilon_N\;\text{ and }\;\norm{d^k}>r_N
\end{align*}
for all $k\ge N$, where $N:=\max\mathcal{N}+1$.

\item[\bf(iv)] If $\N\setminus\mathcal{N}$ is finite, then $\nabla f(x^K)=0$ and $\set{x^k}_{k\ge K}$ is a constant sequence, where we denote $K:=\max\{\N\setminus\mathcal{N}\}+1$. Otherwise, $\bar x$ is an accumulation point of $\set{x^k}$ if and only if it is an accumulation point of $\set{x^k}_{k\in \N\setminus\mathcal{N}}$, and therefore $x^k\rightarrow\bar x$ if and only if $x^k\overset{\N\setminus\mathcal{N}}{\longrightarrow}\bar x$.
\end{itemize}
\end{Proposition}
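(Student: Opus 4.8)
The plan is to dispatch the four assertions in increasing order of difficulty, using throughout the chain of equivalences \eqref{alter condi for null} recorded in Remark~\ref{remark 1}(iii). Assertion (i) is a direct restatement: by Definition~\ref{nul iter}, $k\in\mathcal{N}$ means $x^{k+1}=x^k$, and \eqref{alter condi for null} already equates this with each of the remaining conditions, so nothing further is needed. For (ii), the key observation I would record first is that both radii decrease in \emph{lockstep} and only at null iterations: Step~\ref{Step 2} multiplies $r_k$ by $\mu$ and $\varepsilon_k$ by $\theta$ precisely when $k\in\mathcal{N}$, and leaves both unchanged otherwise. Writing $m_k:=\#\set{j<k\mid j\in\mathcal{N}}$ for the number of null iterations before $k$, an easy induction yields the closed forms $r_k=\mu^{m_k}r_1$ and $\varepsilon_k=\theta^{m_k}\varepsilon_1$. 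Since $\mu,\theta\in(0,1)$ and both sequences are nonincreasing, we obtain $r_k\downarrow 0\Leftrightarrow m_k\to\infty\Leftrightarrow\varepsilon_k\downarrow 0$, and $m_k\to\infty$ is by definition equivalent to $\mathcal{N}$ being infinite.

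Assertion (iii) follows from negating the null condition together with the explicit length of $d^k$. If $\mathcal{N}$ is finite and $N:=\max\mathcal{N}+1$, then every $k\ge N$ lies in $\N\setminus\mathcal{N}$, so \eqref{alter condi for null} gives $\norm{g^k}>r_k+\varepsilon_k$; since no further reduction occurs past $N$, the radii are frozen at $r_k=r_N$ and $\varepsilon_k=\varepsilon_N$, which is the first inequality. The second follows by reading off from \eqref{choose dk} that $\norm{d^k}=\norm{g^k}-\varepsilon_k$ (the scalar factor is positive since $\norm{g^k}>\varepsilon_k$), whence $\norm{d^k}>r_N+\varepsilon_N-\varepsilon_N=r_N$.

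The substantive part is (iv), which I would split according to whether $\N\setminus\mathcal{N}$ is finite. If it is finite with $K:=\max\{\N\setminus\mathcal{N}\}+1$, then every $k\ge K$ is null, so $\set{x^k}_{k\ge K}$ is constant equal to $x^K$; moreover $\N\setminus\mathcal{N}$ finite forces $\mathcal{N}$ infinite, so part (ii) gives $r_k,\varepsilon_k\downarrow 0$. Using $\nabla f(x^k)=\nabla f(x^K)$ for $k\ge K$, the triangle inequality, \eqref{calculate approx grad}, and the null condition $\norm{g^k}\le r_k+\varepsilon_k$, I get $\norm{\nabla f(x^K)}\le\norm{g^k}+\varepsilon_k\le r_k+2\varepsilon_k\to 0$, forcing $\nabla f(x^K)=0$. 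In the remaining case, $\N\setminus\mathcal{N}$ infinite, the argument rests on a \emph{block structure}: since $x^{j}=x^{j+1}$ exactly when $j\in\mathcal{N}$, the sequence is constant on maximal contiguous blocks of indices, and each block ends at a right endpoint that necessarily lies in $\N\setminus\mathcal{N}$ (an infinite trailing block would make $\N\setminus\mathcal{N}$ finite). Thus every value $x^k$ is attained at some index $b(k)\in\N\setminus\mathcal{N}$ with $b(k)\ge k$. This one fact transfers both accumulation points and limits between $\set{x^k}$ and the subsequence $\set{x^k}_{k\in\N\setminus\mathcal{N}}$: one direction is trivial (a subsequence of a convergent sequence), while the reverse uses $x^k=x^{b(k)}$ with $b(k)\ge k\to\infty$ along $\N\setminus\mathcal{N}$.

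I expect the main obstacle to lie in making the block-structure argument in (iv) fully rigorous, namely in verifying that in the infinite case no block can be infinite, so that every value is genuinely recovered at a non-null index, and in establishing the monotonicity $b(k)\ge k$ of the retraction $k\mapsto b(k)$, which is exactly what lets convergence along $\N\setminus\mathcal{N}$ propagate to the entire sequence. Everything else reduces to bookkeeping with \eqref{alter condi for null} and the explicit update rules of Steps~\ref{Step 1}--\ref{Step 2}.
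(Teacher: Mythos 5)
Your proof is correct and follows essentially the same route as the paper's: parts (i)--(iii) coincide, and your block-structure retraction $b(k)$ in (iv) is exactly the paper's construction of an index $\widehat{K}\in\N\setminus\mathcal{N}$ with $K,K+1,\ldots,\widehat{K}-1\in\mathcal{N}$ and hence $x^{\widehat{K}}=x^K$. You merely spell out details the paper leaves terse (the closed forms $r_k=\mu^{m_k}r_1$, $\varepsilon_k=\theta^{m_k}\varepsilon_1$ in (ii), and the explicit limit transfer via $b(k)\ge k$ that justifies the final ``therefore'' in (iv)), which is a tightening of the same argument rather than a different approach.
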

\begin{proof} Assertions (i) and (ii) follow directly from Definition~\ref{nul iter}. To verify (iii), observe that for any natural number $k\ge N$, the $k^{\rm th}$ iteration is not a null one and then deduce from (i) that
\begin{align*}
\varepsilon_{k+1}=\varepsilon_k=\varepsilon_N,\quad r_{k+1}=r_k=r_N,\;\mbox{ and }\;\norm{g^k}>r_k+\varepsilon_k=r_N+\varepsilon_N.
\end{align*}
Together with Step~\ref{Step 2} in Algorithm~\ref{al gd inexact}, this ensures that   
\begin{align*}
\norm{d^k}=\norm{g^k}-\varepsilon_k>r_k=r_N.
\end{align*}
which readily justifies assertion (i).

The proof of (iv) is a bit more involved. Supposing that the set $\N\setminus\mathcal{N}$ is finite, we have that $k\in \mathcal{N}$ for all $k\ge K$. This means by (i) that
\begin{align*}
x^{k+1}=x^{k},\;\varepsilon_{k+1}=\theta\varepsilon_k,\;r_{k+1}=\mu r_k,\;\text{ and }\;\norm{g^k}\le r_k+\varepsilon_k\;\text{ whenever }\;k\ge K.
\end{align*}
This tells us that $x^k=x^{K}$ for all $k\ge K$, and that $\varepsilon_k\downarrow 0,\;r_k\downarrow 0$, and $g^k\rightarrow 0$ as $k\to\infty$. Taking the limit as $k\rightarrow \infty$ in $\norm{g^k-\nabla f(x^k)}\le \varepsilon_k$ gives us $\nabla f(x^k)\rightarrow0$, which yields $\nabla f(x^{K})=0$.

Supposing otherwise that the set $\N\setminus\mathcal{N}$ is infinite, we obviously get that every accumulation point of $\set{x^k}_{k\in \N\setminus \mathcal{N}}$ is an accumulation point of $\set{x^k}$. Conversely, taking any accumulation point $\bar x$ of $\set{ x^k}$, it suffices to show that 
\begin{align*}
\mbox{for any }\;\delta>0,\;N\in\N\;\mbox{ there exists }\;k_N\in\N\setminus\mathcal{N},\;k_N\ge N\;\mbox{ with }\;\norm{x^{k_N}-\bar x}<\delta.
\end{align*}
To verify this, fixing $\delta >0$ and $N\in\N$ and remembering that $\bar x$ is an accumulation point of $\set{x^k}$, we find $K\ge N$ such that $\norm{x^K-\bar x}<\delta$. If $K\in\N\setminus\mathcal{N}$, choose $k_N:=K$. Otherwise, using that $\N\setminus\mathcal{N}$ is infinite allows us to find $\widehat{K}\in\N\setminus\mathcal{N}$ for which $K,K+1,\ldots,\widehat{K}-1\in\mathcal{N}$. This ensures that 
\begin{align*}
{x^{\widehat{K}}}={x^{\widehat{K}-1}}=\ldots=x^{K+1}=x^K,
\end{align*}
and therefore, with $k_N:=\widehat{K}$, we get that $\norm{x^{k_N}-\bar x}<\delta$. Since $\delta$ was chosen arbitrarily, this clearly shows that $\bar x$ is an accumulation point of $\set{x^k}_{k\in\N\setminus\mathcal{N}}$ and thus completes the proof. 
\end{proof}

The last proposition here establishes relationships between convergence properties of the sequences $\set{g^k}$ and $\set{d^k}$ in Algorithm~\ref{al gd inexact}.

\begin{Proposition}\label{goes to 0} Let $\set{g^k},\set{d^k},\;\set{\varepsilon_k}$, and $\set{r_k}$ be sequences generated by Algorithm~{\rm\ref{al gd inexact}}. Then for any $k\in\N$ we have the estimates
\begin{align}\label{nx1}
\norm{d^k}\le\norm{g^k}\le\norm{d^k}+\varepsilon_k+r_k.
\end{align}
Consequently, the following assertions hold:
\begin{itemize}
\item[\bf(i)] $\varepsilon_k\downarrow0$ if and only if there is an infinite set $J\subset\N$ such that $g^k\overset{J}{\rightarrow}0$.

\item[\bf(ii)] For any infinite set $J\subset\N$, we have the equivalence
\begin{align*}
g^k\overset{J}{\rightarrow}0\Longleftrightarrow d^k\overset{J}{\rightarrow}0.
\end{align*}
\end{itemize}
\end{Proposition}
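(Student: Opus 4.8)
The plan is to first establish the two-sided estimate \eqref{nx1} by a case split according to whether the $k^{\rm th}$ iteration is null, and then to derive (i) and (ii) from this estimate together with Proposition~\ref{null properties}. For the estimate itself, if $d^k=0$ then Step~\ref{Step 2} was triggered by $\norm{g^k}\le r_k+\varepsilon_k$; here $\norm{d^k}=0\le\norm{g^k}$ and $\norm{g^k}\le r_k+\varepsilon_k=\norm{d^k}+\varepsilon_k+r_k$, so both inequalities hold. If instead $d^k\ne0$, then $\norm{g^k}>r_k+\varepsilon_k>\varepsilon_k$, and the explicit formula \eqref{choose dk} gives $\norm{d^k}=\norm{g^k}-\varepsilon_k$; since $\varepsilon_k,r_k>0$, this yields $\norm{d^k}<\norm{g^k}$ and $\norm{g^k}=\norm{d^k}+\varepsilon_k<\norm{d^k}+\varepsilon_k+r_k$. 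This disposes of \eqref{nx1} in both cases.

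For assertion (i), I would invoke the equivalence in Proposition~\ref{null properties}(ii) that $\varepsilon_k\downarrow0$ is the same as $\mathcal{N}$ being infinite, and forces $r_k\downarrow0$. For the forward implication, take $J=\mathcal{N}$: for every $k\in\mathcal{N}$ the characterization \eqref{alter condi for null} gives $\norm{g^k}\le r_k+\varepsilon_k$, and since $r_k,\varepsilon_k\to0$ we obtain $g^k\overset{J}{\rightarrow}0$. For the converse I argue by contraposition: if $\varepsilon_k\not\downarrow0$, then $\mathcal{N}$ is finite, so Proposition~\ref{null properties}(iii) yields $\norm{g^k}>r_N+\varepsilon_N>0$ for all large $k$; hence $\norm{g^k}$ is bounded away from $0$ and no infinite subsequence of $\set{g^k}$ can converge to $0$.

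For assertion (ii), the forward implication $g^k\overset{J}{\rightarrow}0\Rightarrow d^k\overset{J}{\rightarrow}0$ is immediate from the left inequality $\norm{d^k}\le\norm{g^k}$ in \eqref{nx1}. The reverse implication is the delicate one, and is where I expect the main obstacle: the natural move is to squeeze $g^k$ through the right inequality $\norm{g^k}\le\norm{d^k}+\varepsilon_k+r_k$, but this requires $\varepsilon_k+r_k\to0$, which is not available a priori. The remedy is to first deduce the infinitude of $\mathcal{N}$ from the hypothesis $d^k\overset{J}{\rightarrow}0$: were $\mathcal{N}$ finite, Proposition~\ref{null properties}(iii) would give $\norm{d^k}>r_N>0$ for all large $k$, contradicting $d^k\overset{J}{\rightarrow}0$ along the infinite set $J$. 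Thus $\mathcal{N}$ is infinite, whence $\varepsilon_k\downarrow0$ and $r_k\downarrow0$ by Proposition~\ref{null properties}(ii); feeding $\varepsilon_k+r_k\to0$ together with $\norm{d^k}\overset{J}{\rightarrow}0$ into the right inequality of \eqref{nx1} then yields $g^k\overset{J}{\rightarrow}0$ and completes the argument.
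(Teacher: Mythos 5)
Your proposal is correct and follows essentially the same route as the paper: the case split (null vs.\ non-null iteration) for \eqref{nx1}, the use of Proposition~\ref{null properties}(ii)--(iii) for both directions of (i), and the squeeze through the right inequality of \eqref{nx1} after first forcing $\mathcal{N}$ to be infinite for the reverse implication in (ii). If anything, your handling of that last step is slightly more careful than the paper's, which attributes the infinitude of $\mathcal{N}$ to Proposition~\ref{null properties}(i), whereas the clean justification is exactly your contrapositive appeal to Proposition~\ref{null properties}(iii).
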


\begin{proof}
Fix any $k\in\N$. If $k\in\mathcal{N}$, then we get by Proposition~\ref{null properties}(i) that $d^k=0$ and $\norm{g^k}\le \varepsilon_k+r_k$. Otherwise, Step~\ref{Step 2} in Algorithm~\ref{al gd inexact} yields $\norm{d^k}=\norm{g^k}-\varepsilon_k\le \norm{g^k}$. In both cases, (\ref{nx1}) holds. 

To deduce (i) from (\ref{nx1}), suppose that $\varepsilon_k\downarrow 0$ as $k\to\infty$. By Proposition~\ref{null properties}(ii) we have that $r_k\downarrow 0$ and the set $\mathcal{N}$ is infinite. Then for any $\delta>0$ and $N\in\N$ there is $k\ge N$ with $k\in\mathcal{N}$ such that
\begin{align*}
\norm{g^{k}}\le r_k+\varepsilon_k<\delta,
\end{align*}
where the first inequality follows from (\ref{alter condi for null}). Thus we can construct an infinite set $J\subset\N$ such that $g^k\overset{J}{\longrightarrow}0$. If conversely the sequence $\set{\varepsilon_k}$ does not converge to $0$, then the set $\mathcal{N}$ is finite by Proposition~\ref{null properties}(ii). Using Proposition~\ref{null properties}(iii) confirms that $\set{g^k}$ is bounded away from $0$, which tells us that such an index set $J$ does not exist.

To verify now assertion (ii), observe that assuming $g^k\overset{J}{\rightarrow}0$, implies by the first inequality in (\ref{nx1}) that $d^k\overset{J}{\rightarrow}0$. Conversely, suppose that $d^k\overset{J}{\rightarrow}0$ and deduce from Proposition~\ref{null properties}(i) that the set $\mathcal{N}$ is infinite. Then it follows from Proposition~\ref{null properties}(ii) that  $\varepsilon_k\downarrow0$ and $r_k\downarrow0$ as $k\to\infty$. Using the second inequality in (\ref{nx1}), we arrive at $g^k\overset{J}{\rightarrow}0$ and thus complete the proof of the proposition.
\end{proof}

Finally in this section, we deduce from the obtained results the following desired property of the direction sequence $\set{d^k}$ in Algorithms~\ref{al gd inexact}. 

\begin{Corollary}\label{overall} The sequence $\set{d^k}$ in Algorithms~{\rm\ref{al gd inexact}} is gradient associated with $\set{x^k}$.
\end{Corollary}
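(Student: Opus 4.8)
The aim is to verify the single implication defining \emph{gradient associated} in Definition~\ref{defi: approximate gradient}: for every infinite set $J\subset\N$ along which $d^k\overset{J}{\rightarrow}0$, one must deduce $\nabla f(x^k)\overset{J}{\rightarrow}0$. The plan is to chain together the two parts of Proposition~\ref{goes to 0} with the controlled-error bound \eqref{calculate approx grad}; no genuinely new estimate is required, so this is a short corollary rather than a fresh argument.

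First I would fix an \emph{arbitrary} infinite set $J$ with $d^k\overset{J}{\rightarrow}0$ and apply Proposition~\ref{goes to 0}(ii), whose equivalence $g^k\overset{J}{\rightarrow}0\Longleftrightarrow d^k\overset{J}{\rightarrow}0$ immediately gives $g^k\overset{J}{\rightarrow}0$. The existence of an infinite index set along which $\set{g^k}$ tends to $0$ is precisely the hypothesis of Proposition~\ref{goes to 0}(i), which then yields $\varepsilon_k\downarrow0$, and in particular $\varepsilon_k\to0$ as $k\to\infty$.

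Finally I would combine these two facts with the inexactness condition \eqref{calculate approx grad}, which guarantees $\norm{g^k-\nabla f(x^k)}\le\min\set{\varepsilon_k,\rho_k}\le\varepsilon_k$ for all $k\in\N$. The triangle inequality then gives $\norm{\nabla f(x^k)}\le\norm{g^k}+\varepsilon_k$, whose right-hand side tends to $0$ along $J$ because both summands do. Hence $\nabla f(x^k)\overset{J}{\rightarrow}0$, establishing the desired implication and thus the conclusion that $\set{d^k}$ is gradient associated with $\set{x^k}$.

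The only point demanding care is the quantifier structure: being gradient associated requires the implication to hold for \emph{every} infinite set $J$ on which the directions vanish, so the argument must begin from an arbitrary such $J$ and not from a specific one constructed along the way. Past that subtlety there is no real obstacle, since all the substantive work—relating $d^k$, $g^k$, and $\varepsilon_k$—has already been carried out in Propositions~\ref{null properties} and \ref{goes to 0}.
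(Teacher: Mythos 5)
Your proposal is correct and follows essentially the same route as the paper's own proof: both apply Proposition~\ref{goes to 0}(ii) to pass from $d^k\overset{J}{\rightarrow}0$ to $g^k\overset{J}{\rightarrow}0$, invoke Proposition~\ref{goes to 0}(i) to conclude $\varepsilon_k\downarrow 0$, and then combine the error bound $\norm{g^k-\nabla f(x^k)}\le\varepsilon_k$ from \eqref{calculate approx grad} with the triangle inequality to obtain $\nabla f(x^k)\overset{J}{\rightarrow}0$. Your explicit attention to the quantifier over arbitrary infinite sets $J$ is a fair point of care, though the paper handles it implicitly in the same way.
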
\vspace*{-0.2in}
\begin{proof} It follows from Proposition \ref{goes to 0} that the convergence $d^k\overset{J}{\rightarrow}0$ yields $g^k\overset{J}{\rightarrow}0$ and $\varepsilon_k\downarrow0.$ Thus we get  $\nabla f(x^k)\overset{J}{\rightarrow}0$ by taking into account $\norm{g^k-\nabla f(x^k)}\le \varepsilon_k$ from \eqref{calculate approx grad}. This shows therefore that the sequence $\set{d^k}$ is gradient associated with $\set{x^k}$.
\end{proof}\vspace*{-0.15in}

\section{Inexact Reduced Gradient Methods with Stepsize Selections}\label{sec 5}

In this section, we develop novel IRG methods with the following selections of stepsize rules: {\em backtracking stepsize}, {\em constant stepsize}, and {\em diminishing stepsize}. Address first an IRG method with the {\em backtracking line search}. Choose a line search scalar $\beta\in(0,1)$, a reduction factor $\gamma\in(0,1)$, and an artificial stepsize of null iterations $\tau\in(0,1)$. Consider the {\em Master Algorithm}~\ref{al gd inexact} with the stepsize sequence $\set{t_k}$ in Step~3 calculated as follows. If $d^k=0$, then put $t_k:=\tau$. Otherwise, we set
\begin{align}\label{defi: tk}
t_k:=\max\big\{t\;\big|\;f(x^k+td^k)\le f(x^k)-\beta t\big\|d^k\big\|^2,\;t=1,\;\gamma,\;\gamma^2,\ldots\big\}.
\end{align}

The next proposition shows that the stepsize sequence $\set{t_k}$  is well-defined.

\begin{Proposition}\label{descent} If $d^k\ne 0$, then there exists a positive number $\bar t$ such that 
\begin{align*}
f(x^k+td^k)\le f(x^k)-\beta t\big\|d^k\big\|^2\;\text{ for all }\;t\in [0,\bar t],
\end{align*}
which ensures the existence of the stepsize sequence $\{t_k\}$ in \eqref{defi: tk}. 
\end{Proposition}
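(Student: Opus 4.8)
The plan is to reduce the claim to a one-dimensional analysis of the restriction of $f$ along the direction $d^k$. First I would introduce the scalar function $\phi(t):=f(x^k+td^k)$, which is $\mathcal{C}^1$-smooth because $f$ is, and compute its derivative at the origin by the chain rule: $\phi'(0)=\dotproduct{\nabla f(x^k),d^k}$. Invoking the sufficient descent estimate \eqref{sufi des holds} from Proposition~\ref{prop sufi holds} then gives $\phi'(0)\le-\norm{d^k}^2<0$, where the strict inequality uses the standing assumption $d^k\ne0$.

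The key observation is that the line search scalar $\beta\in(0,1)$ leaves a strictly positive gap: since $\norm{d^k}^2>0$, we have $\phi'(0)\le-\norm{d^k}^2<-\beta\norm{d^k}^2$. I would exploit this gap through the very definition of the derivative. Writing $\phi'(0)=\lim_{t\to0^+}\frac{\phi(t)-\phi(0)}{t}$ and noting that $(1-\beta)\norm{d^k}^2$ is positive, there exists $\bar t>0$ such that $\big|\frac{\phi(t)-\phi(0)}{t}-\phi'(0)\big|<(1-\beta)\norm{d^k}^2$ for all $t\in(0,\bar t]$. Combining this with $\phi'(0)\le-\norm{d^k}^2$ yields $\frac{\phi(t)-\phi(0)}{t}<-\beta\norm{d^k}^2$, and multiplying through by $t>0$ produces the asserted inequality $f(x^k+td^k)\le f(x^k)-\beta t\norm{d^k}^2$ on $[0,\bar t]$, the case $t=0$ being trivial.

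Finally, to conclude that the stepsize $t_k$ in \eqref{defi: tk} is well-defined, I would observe that the geometric sequence $1,\gamma,\gamma^2,\ldots$ tends to $0$, so some power $\gamma^j$ lies in $(0,\bar t]$ and hence satisfies the backtracking inequality; this makes the feasible set in \eqref{defi: tk} nonempty, and its maximum is attained as the first admissible term of that sequence. I do not anticipate a serious obstacle here: the only point requiring care is the quantitative use of the definition of $\phi'(0)$, where the slack created by $\beta<1$ must be matched against the first-order decrease guaranteed by Proposition~\ref{prop sufi holds}.
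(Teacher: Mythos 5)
Your proof is correct and follows essentially the same route as the paper's: both invoke Proposition~\ref{prop sufi holds} to get $\dotproduct{\nabla f(x^k),d^k}\le-\norm{d^k}^2$ and then use first-order differentiability at $x^k$, absorbing the remainder into the slack $(1-\beta)\norm{d^k}^2>0$; your quantitative use of the limit defining $\phi'(0)$ is just the $\varepsilon$--$\delta$ form of the paper's $o(t)/t\to 0$ argument. The concluding observation that some $\gamma^j$ falls in $(0,\bar t]$, so the maximum in \eqref{defi: tk} is attained, matches the paper's conclusion as well.
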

\begin{proof} Suppose that $d^k\ne 0$ and get by Proposition~\ref{prop sufi holds} that $\dotproduct{\nabla f(x^k),d^k}\le-\norm{d^k}^2$. By the differentiability of $f$ at $x^k$, for each $t>0$ sufficiently small we have
\begin{align*}
f(x^k+td^k)-f(x^k)&=t\dotproduct{\nabla f(x^k),d^k}+o(t)\le-t\norm{d^k}^2+o(t)\\
&=-\beta t\norm{d^k}^2+t\brac{
(\beta-1)\norm{d^k}^2+ \dfrac{o(t)}{t}}.
\end{align*}
Since $o(t)/t\rightarrow0$ as $t\dn 0$ and since $(\beta-1)\norm{d^k}^2<0$, there exists $\bar t>0$ such that 
\begin{align*}
f(x^k+td^k)\le f(x^k)-\beta t\norm{d^k}^2\;\text{ for all }\;t\in(0,\bar t].
\end{align*}
Therefore, all the numbers $t_k$ introduced in (\ref{defi: tk}) are well-defined.
\end{proof}

Now we are ready to establish a major result about the {\em stationarity of accumulation points} of the iterative sequence generated by Algorithm~\ref{al gd inexact} with the backtracking line search.

\begin{Theorem}\label{convergence 1}
Let $\set{x^k}$ be the sequence of iterations generated by Algorithm~{\rm\ref{al gd inexact}} with the stepsize sequences $t_k$ from \eqref{defi: tk} of the backtracking line search. Assume that $\inf f(x^k)>-\infty$ and that $\rho_k\rightarrow0$ as $k\rightarrow\infty$. Then the following assertions hold:
\begin{itemize}
\item[\bf(i)] $\varepsilon_k\downarrow0$ and $r_k\downarrow 0$ as $k\to\infty$.

\item[\bf(ii)] Every accumulation point of $\set{x^k}$ is a stationary point of $f$.

\item[\bf(iii)] If the sequence $\set{x^k}$ is bounded, then the set of accumulation points of $\set{x^k}$ is nonempty, compact, and connected.

\item[\bf(iv)] If $\set{x^k}$ has an isolated accumulation point, then the entire sequence $\set{x^k}$ converges to this point.
\end{itemize}
\end{Theorem}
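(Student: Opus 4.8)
The plan is to extract from the backtracking rule a single descent estimate that simultaneously yields assertion (i) and verifies the hypotheses of Theorem~\ref{stationary point theorem}, and then to read off (ii)--(iv) from that theorem. For every $k\in\N$ the stepsize rule \eqref{defi: tk} (whose stepsizes are well defined by Proposition~\ref{descent}) together with Proposition~\ref{prop sufi holds} gives $f(x^{k+1})\le f(x^k)-\beta t_k\norm{d^k}^2$; on null iterations this is trivial, since there $d^k=0$ and $x^{k+1}=x^k$. Telescoping this inequality and invoking $\inf_k f(x^k)>-\infty$ produces $\sum_{k=1}^\infty t_k\norm{d^k}^2<\infty$, which is precisely condition (c) of Theorem~\ref{stationary point theorem}; moreover $t_k\le 1$ for all $k$ (backtracking starts from $t=1$, while $t_k=\tau<1$ on null iterations), so $\set{t_k}$ is bounded above.

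I would prove (i) by contradiction, assuming that the set $\mathcal{N}$ of null iterations is finite. Then Proposition~\ref{null properties}(iii) furnishes an index $N$ with $\norm{d^k}>r_N>0$ for all $k\ge N$. Feeding this uniform lower bound into $\sum t_k\norm{d^k}^2<\infty$ yields both $\sum_{k\ge N}t_k<\infty$ and $\sum_{k\ge N}\norm{x^{k+1}-x^k}=\sum_{k\ge N}t_k\norm{d^k}\le r_N^{-1}\sum_k t_k\norm{d^k}^2<\infty$; hence $\set{x^k}$ is Cauchy, converging to some $x^*$, and $t_k\to 0$. Since $t_k\to 0$, for all large $k$ we have $t_k<1$, so the larger candidate $t_k/\gamma$ fails the test in \eqref{defi: tk}, giving $f(x^k+(t_k/\gamma)d^k)>f(x^k)-\beta(t_k/\gamma)\norm{d^k}^2$. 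Applying the mean value theorem to $t\mapsto f(x^k+td^k)$ produces $s_k\in(0,t_k/\gamma)$ with $\dotproduct{\nabla f(x^k+s_kd^k),d^k}>-\beta\norm{d^k}^2$; subtracting the sufficient-descent inequality $\dotproduct{\nabla f(x^k),d^k}\le-\norm{d^k}^2$ of Proposition~\ref{prop sufi holds} and applying Cauchy--Schwarz gives $\norm{\nabla f(x^k+s_kd^k)-\nabla f(x^k)}>(1-\beta)\norm{d^k}>(1-\beta)r_N$. On the other hand $\norm{s_kd^k}<\gamma^{-1}t_k\norm{d^k}\to 0$, because $t_k\norm{d^k}^2\to 0$ and $\norm{d^k}>r_N$, so both $x^k$ and $x^k+s_kd^k$ tend to $x^*$, and continuity of $\nabla f$ forces the left-hand side to $0$, a contradiction. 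Consequently $\mathcal{N}$ is infinite, and Proposition~\ref{null properties}(ii) delivers $\varepsilon_k\downarrow 0$ and $r_k\downarrow 0$, proving (i).

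Assertions (ii)--(iv) then reduce to Theorem~\ref{stationary point theorem}. Its hypothesis (a) holds because $\set{d^k}$ is gradient associated with $\set{x^k}$ by Corollary~\ref{overall}. For (b), part (i) gives $\varepsilon_k\downarrow 0$, so Proposition~\ref{goes to 0}(i) supplies an infinite set $J$ with $g^k\overset{J}{\to}0$, whence $d^k\overset{J}{\to}0$ by Proposition~\ref{goes to 0}(ii); thus $0$ is an accumulation point of $\set{d^k}$. Hypothesis (c) and the boundedness of $\set{t_k}$ were established in the first paragraph. Theorem~\ref{stationary point theorem} now gives the stationarity of every accumulation point (assertion (ii)) and, through its own parts (i) and (ii), assertions (iii) and (iv).

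The crux is (i), and the genuine difficulty there is that $\varepsilon_k\downarrow 0$ is asserted without presupposing $\set{x^k}$ bounded, so the classical compactness-based analysis of backtracking is not directly available. The device that resolves this is the observation that a finite $\mathcal{N}$ already forces $\set{x^k}$ to be Cauchy, through the uniform bound $\norm{d^k}>r_N$ and the summability of $t_k\norm{d^k}^2$; this manufactures the compactness needed to run the mean-value/continuity contradiction. The standing assumption $\rho_k\to 0$ keeps the inexact gradients asymptotically exact; in the passage from $d^k\overset{J}{\to}0$ to $\nabla f(x^k)\overset{J}{\to}0$ underlying (ii), it is the accuracy $\norm{g^k-\nabla f(x^k)}\le\varepsilon_k$ from \eqref{calculate approx grad} together with $\varepsilon_k\downarrow 0$ that does the transfer to $\nabla f$.
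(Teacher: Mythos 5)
Your proof is correct, and its skeleton coincides with the paper's: the same telescoped descent estimate yielding $\sum_{k}t_k\norm{d^k}^2<\infty$ and $t_k\le 1$, the same contradiction setup for assertion (i) (finite $\mathcal{N}$, the uniform bound $\norm{d^k}>r_N$ from Proposition~\ref{null properties}(iii), Cauchy-ness of $\set{x^k}$, $t_k\to 0$, the failed test at $t_k/\gamma$ combined with the mean value theorem), and the same reduction of (ii)--(iv) to Theorem~\ref{stationary point theorem} via Corollary~\ref{overall} and Proposition~\ref{goes to 0}. Where you genuinely diverge is in how the contradiction in (i) is closed. The paper uses $\rho_k\to 0$ to obtain $g^k\to\nabla f(\bar x)$, identifies the limit $-d^k\to\bar g={\rm Proj}\big(0,\mathbb{B}(\nabla f(\bar x),\varepsilon_N)\big)$, and then the projection characterization \eqref{proj} yields $\norm{\bar g}^2\le\dotproduct{\bar g,\nabla f(\bar x)}\le\beta\norm{\bar g}^2$, forcing $\bar g=0$ against $\norm{\bar g}\ge r_N$. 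You instead subtract the sufficient-descent inequality of Proposition~\ref{prop sufi holds} from the mean-value inequality, apply Cauchy--Schwarz to get the uniform lower bound $\norm{\nabla f(x^k+s_kd^k)-\nabla f(x^k)}>(1-\beta)r_N$, and contradict it by continuity of $\nabla f$ at the common limit $x^*$. Your variant is slightly more elementary (no limit identification of $d^k$, no projection argument at the limit point) and, notably, it never invokes the hypothesis $\rho_k\to 0$: all you use is the bound $\norm{g^k-\nabla f(x^k)}\le\varepsilon_k$ already built into \eqref{calculate approx grad}. Since your treatment of (ii)--(iv) likewise avoids $\rho_k$, your argument in fact establishes the theorem without the assumption $\rho_k\to 0$, a mild strengthening of the stated result; this is worth noting against Remark~\ref{remark intu}(i), where the paper asserts that the choice $\rho_k\to 0$ plays a vital role precisely in the backtracking case.
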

\begin{proof} From the choice of $\set{t_k}$ and Step~4 in Algorithm~\ref{al gd inexact}, for every $k\in\N$ we have
\begin{align}\label{tk dk}
\beta t_k\norm{d^k}^2\le f(x^k)-f(x^{k+1}).
\end{align}
Since $\inf f(x^k)>-\infty$, summing up  on both sides of (\ref{tk dk}) over $k=1,2,\ldots$ and using the relation $x^{k+1}=x^k+t_kd^k$, we get that
\begin{align}\label{taking sum 1}
\sum_{k=1}^\infty t_k\norm{d^k}^2<\infty\quad\text{ and }\quad \sum_{k=1}^\infty\norm{x^{k+1}-x^k}\cdot\norm{d^k}<\infty.
\end{align}

To verify assertion (i), recall by Proposition~\ref{null properties}(ii) that the convergence $\varepsilon_k\downarrow0$ is equivalent to $r_k\downarrow 0$ and to the fact that the set of null iterations $\mathcal{N}$ is infinite. Assume on the contrary that $\mathcal{N}$ is finite. By Proposition~\ref{null properties}(iii) with $N=\max\mathcal{N}+1$, we have
\begin{align}\label{dk bounded}
\norm{g^k}>r_N+\varepsilon_N\;\text{ and }\;\norm{d^k}>  r_N\;\text{ for all }\;k\ge N.
\end{align}
Then (\ref{taking sum 1}) gives us $\sum_{k=1}^\infty t_k<\infty$ and thus $t_k\downarrow0$ as $k\to\infty$. Choosing a larger number $N$ if necessary, we get that $t_k<1$ for all $k\ge N$. For such $k$, it follows from the exit condition of the algorithm that
\begin{align}\label{exit inexact}
-\gamma^{-1}\beta t_k\norm{d^k}^2< f(x^{k}+\gamma^{-1}t_kd^k)-f(x^k).
\end{align}
By the classical mean value theorem, there exists some $\tilde{x}^k\in[x^k,x^k+\gamma^{-1}t_kd^k]$ such that 
\begin{align*}
f(x^{k}+\gamma^{-1}t_kd^k)-f(x^k)= \gamma^{-1}t_k\dotproduct{d^k,\nabla f(\tilde{x}^k)}.
\end{align*}
Combining the latter with (\ref{exit inexact}) tells us that
\begin{align}\label{contra 1}
\dotproduct{-d^k,\nabla f(\tilde{x}^k)}\le\beta \norm{d^k}^2\;\text{ for all }\;k\ge N.
\end{align}
Using (\ref{taking sum 1}) and (\ref{dk bounded}), we have $\sum_{k=1}^\infty\norm{x^{k+1}-x^k}<\infty$, and thus $\set{x^k}$ converges to some $\bar x\in\R^n$. The continuity of $\nabla f$ ensures that $\nabla f(x^k)\rightarrow\nabla f(\bar x)$. Then employing $\norm{g^k-\nabla f(x^k)}\le \rho_k\rightarrow0$ yields $g^k\rightarrow\nabla f(\bar x)$ as $k\rightarrow\infty$.
It follows from Step~\ref{Step 2} that
\begin{align*}
-d^k&=\dfrac{\norm{g^k}-\varepsilon_k}{\norm{g^k}}g^k=\dfrac{\norm{g^k}-\varepsilon_N}{\norm{g^k}}g^k\;\text{ for all \;}k\ge N.
\end{align*}
Letting $k\rightarrow\infty$ leads us to the equalities
\begin{align}\label{1.8 1}
-d^k\rightarrow\bar g:=\dfrac{\norm{\nabla f(\bar x)}- \varepsilon_N }{\norm{\nabla f(\bar x)}}\nabla f(\bar x)={\rm Proj}\big(0,\mathbb{B}(\nabla f(\bar x),\varepsilon_N)\big).
\end{align}
Using $t_k\downarrow0$, we get that $\tilde{x}^k\rightarrow\bar x$, and thus $\nabla f(\tilde{x}^k)\rightarrow\nabla f(\bar x)$ as $k\to\infty$. Combining the latter with (\ref{contra 1}), (\ref{1.8 1}), and the projection characterization in \eqref{proj} verifies the estimates
\begin{align}
\norm{\bar g}^2\le\dotproduct{\bar g,\nabla f(\bar x)}\le\beta \norm{\bar g}^2.
\end{align}
This tells us that $\bar g=0$, which contradicts the condition $\norm{\bar g}\ge r_N$ by (\ref{dk bounded}). Therefore, we arrive at $\varepsilon_k\downarrow 0$ and $r_k\downarrow 0$ as $k\to\infty$, which completes the proof of assertion (i).

To justify assertions (ii)--(iv), recall from Corollary~\ref{overall} that $\set{d^k}$ is \textit{gradient associated} with $\set{x^k}$. Since $\varepsilon_k\downarrow 0$, we deduce from Proposition~\ref{goes to 0} that 
$0$ is an accumulation point of $\set{d^k}$. Combining these facts with (\ref{taking sum 1}) and $t_k\le 1$ whenever $k\in\N$ ensures that all the assumptions of Theorem~\ref{stationary point theorem} are satisfied. Therefore, we verify assertions (ii)--(iv) and finish the proof of the theorem.
\end{proof}

Next we consider problem (\ref{optim prob}) with the  objective function $f$ satisfying the \textit{$L$-descent condition} for some $L>0$. The following result establishes convergence properties of IRG method, which use either {\em diminishing} or {\em constant stepsizes}.

\begin{Theorem} \label{convergence constant}
Let $\set{x^k}$ be the sequence generated by Algorithm~{\rm\ref{al gd inexact}}, where 
\begin{itemize}
\item[\bf(a)]$f$ satisfies the $L$-descent condition;
\item[\bf(b)] either $\set{t_k}$ is diminishing, i.e.,
\begin{align}\label{diminishing}
t_k\downarrow0\;\mbox{ as }\;k\to\infty\;\text{ and }\quad\sum_{k=1}^\infty t_k=\infty,
\end{align}
or there exist $\delta,\delta'>0$ such that $\delta'\le\dfrac{2-\delta}{L}$ and
\begin{align}\label{constant}
t_k\in\sbrac{\delta',\dfrac{2-\delta}{L}}\;\text{ for all }\;k\in\N.
\end{align}
\end{itemize}
Assume that $\rho_k=\varepsilon_k$ for all $k\in\N$ and that $\inf f(x^k)>-\infty$. Then all the conclusions of Theorem~{\rm\ref{convergence 1}} hold. Moreover, if $\set{t_k}$ is chosen as \eqref{constant}, then $\nabla f(x^k)\rightarrow 0$ as $k\to\infty$.
\end{Theorem}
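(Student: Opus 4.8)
The plan is to reduce nearly everything to Corollary~\ref{convergence diminishing}, whose hypotheses are tailored to precisely this situation, and then to recover the IRG-specific conclusion $\varepsilon_k\downarrow 0$, $r_k\downarrow 0$ by a separate argument. First I would record the two structural facts about the IRG direction that hold regardless of the stepsize rule: by Proposition~\ref{prop sufi holds} the sequence $\set{d^k}$ satisfies the sufficient descent condition \eqref{(b)} with $\kappa=1$, and by Corollary~\ref{overall} it is gradient associated with $\set{x^k}$. Thus hypotheses (a) and (b) of Corollary~\ref{convergence diminishing} are in force, while $\inf f(x^k)>-\infty$ is assumed and the $L$-descent condition is (a) here.

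Next I would verify hypothesis (c) of Corollary~\ref{convergence diminishing} in both regimes, noting that $2\kappa-\delta=2-\delta$ since $\kappa=1$. In the constant case \eqref{constant} we have $t_k\ge\delta'>0$, so $\sum_k t_k=\infty$, and $t_k\le(2-\delta)/L$ holds for every $k$; in the diminishing case \eqref{diminishing} the divergence $\sum_k t_k=\infty$ is given, and since $t_k\downarrow 0$ we may fix any $\delta\in(0,1)$ and find $N$ with $t_k\le(2-\delta)/L$ for $k\ge N$. In either regime $\set{t_k}$ is bounded from above. Applying Corollary~\ref{convergence diminishing} then yields at once that every accumulation point of $\set{x^k}$ is stationary, which is conclusion (ii) of Theorem~\ref{convergence 1}, together with the set-of-accumulation-points statement (iii) and the isolated-accumulation-point convergence (iv). The final \emph{moreover} claim is likewise immediate: in the constant case $\set{t_k}$ is bounded away from $0$, so the last assertion of Corollary~\ref{convergence diminishing} gives $\nabla f(x^k)\to 0$.

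It remains to establish conclusion (i) of Theorem~\ref{convergence 1}, namely $\varepsilon_k\downarrow 0$ and $r_k\downarrow 0$, which Corollary~\ref{convergence diminishing} does not address and where the argument is genuinely IRG-specific. The key intermediate fact, already produced inside the proof of Corollary~\ref{convergence diminishing}, is that $0$ is an accumulation point of $\set{d^k}$: combining the $L$-descent condition with the sufficient descent inequality and $\inf f(x^k)>-\infty$ gives $\sum_k t_k\norm{d^k}^2<\infty$, and were $0$ not an accumulation point of $\set{d^k}$ then $\norm{d^k}$ would be bounded below by some $u>0$ eventually, forcing $\sum_k t_k<\infty$ and contradicting $\sum_k t_k=\infty$. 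Once $0$ is an accumulation point of $\set{d^k}$, so that $d^k\overset{J}{\to}0$ along some infinite $J$, Proposition~\ref{goes to 0}(ii) gives $g^k\overset{J}{\to}0$ and then Proposition~\ref{goes to 0}(i) yields $\varepsilon_k\downarrow 0$; the equivalence in Proposition~\ref{null properties}(ii) then also gives $r_k\downarrow 0$, completing (i).

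The proof has no deep obstacle, since the heavy lifting is carried out by Corollary~\ref{convergence diminishing} (which itself invokes Theorem~\ref{stationary point theorem}). The only points requiring care are the correct choice of $\delta$ in the diminishing case so that the bound $t_k\le(2-\delta)/L$ is eventually met, and the bookkeeping that translates ``$0$ is an accumulation point of $\set{d^k}$'' into the monotone decrease of the radii $\varepsilon_k$ and $r_k$ via Propositions~\ref{goes to 0} and \ref{null properties}; this last step is the one conclusion specific to the IRG scheme rather than to the abstract linesearch framework.
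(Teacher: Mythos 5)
Your proposal is correct and follows essentially the same route as the paper: both reduce conclusions (ii)--(iv) and the final claim to Corollary~\ref{convergence diminishing} via Proposition~\ref{prop sufi holds} and Corollary~\ref{overall}, and both recover conclusion (i) by extracting from that corollary's proof that $0$ is an accumulation point of $\set{d^k}$ and then invoking Propositions~\ref{goes to 0} and \ref{null properties}(ii). Your write-up merely spells out two steps the paper compresses (the verification of the stepsize bound in the diminishing regime and the re-derivation of the accumulation-point fact), which is fine.
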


\begin{proof} We know from Remark~\ref{overall} that the direction sequence $\set{d^k}$ is \textit{gradient associated} with $\set{x^k}$. Furthermore, Proposition~\ref{prop sufi holds} tells us that $\set{d^k}$ satisfies the \textit{sufficient descent} condition (\ref{(b)}) with the constant $\kappa=1$. Note that if $\set{t_k}$ is chosen as either (\ref{diminishing}) or (\ref{constant}), then we always get that
\begin{align*}
\displaystyle{\sum_{k=1}^\infty}t_k=\infty\quad \text{and}\quad t_k\le\dfrac{2-\delta}{L}\;\text{ for sufficiently large }\;k\in\N.
\end{align*}
Combining these facts with the imposed $L$-descent condition on $f$ yields the fulfillment of assumptions (a), (b), (c) in Corollary~\ref{convergence diminishing}. Therefore, conclusions (ii)--(iv) of Theorem~\ref{convergence 1} hold. 
The proof of Corollary~\ref{convergence diminishing} also ensures that $0$ is an accumulation point of $\set{d^k}$. Thus it follows from Proposition~\ref{goes to 0} that $\varepsilon_k\downarrow 0$. Using Proposition~\ref{null properties}(ii), we have $r_k\downarrow0$, which verifies conclusion (i) of Theorem~\ref{convergence 1}. If $\set{t_k}$ is chosen as (\ref{constant}), its boundedness away from $0$ is guaranteed, and so Corollary~\ref{convergence diminishing} yields $\nabla f(x^k)\rightarrow 0$ as $k\to\infty$ and thus completes the proof of the theorem.
\end{proof}

The final part of our convergence analysis of the proposed IRG methods applies the {\em KL property} to establishing the {\em global} convergence of the {\em entire sequence} of iterations to a {\em stationary point} with deriving {\em convergence rates}. We start with the following simple albeit useful lemma.

\begin{Lemma}\label{gradien 3 dk}
Let $\set{x^k}$ be the sequence generated by Algorithm~{\rm\ref{al gd inexact}} with $\theta<\mu$. Assume that  $\varepsilon_k\downarrow 0$ and $r_k\downarrow0$ as $k\to\infty$. Then there exists some $N\in\N$ such that
\begin{align}\label{equation grad 3k}
\norm{\nabla f(x^k)}\le 3\norm{d^k}\;\text{ for all }\;k\notin\mathcal{N},\;k\ge N,
\end{align}
where the set $\mathcal{N}$ is taken from Definition~{\rm\ref{nul iter}}.
\end{Lemma}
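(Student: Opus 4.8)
The plan is to estimate $\norm{\nabla f(x^k)}$ from above at a non-null iteration by combining the inexact-gradient bound \eqref{calculate approx grad} with the explicit form of $d^k$ in Step~\ref{Step 2}, and then to absorb the resulting error terms using the eventual domination of $r_k$ over $\varepsilon_k$ that is forced by the hypothesis $\theta<\mu$.

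First I would record the basic identities at a non-null iteration. For $k\notin\mathcal{N}$, the equivalences in \eqref{alter condi for null} give $\norm{g^k}>r_k+\varepsilon_k$, and Step~\ref{Step 2} then yields
\begin{align*}
\norm{d^k}=\norm{g^k}-\varepsilon_k>r_k.
\end{align*}
Since $\norm{g^k-\nabla f(x^k)}\le\varepsilon_k$ by \eqref{calculate approx grad}, the triangle inequality produces
\begin{align*}
\norm{\nabla f(x^k)}\le\norm{g^k}+\varepsilon_k=\norm{d^k}+2\varepsilon_k.
\end{align*}
Thus it suffices to control $\varepsilon_k$ by $\norm{d^k}$, and since $\norm{d^k}>r_k$ at non-null iterations, it is enough to show that $\varepsilon_k\le r_k$ for all sufficiently large $k$.

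The crux is to track the ratio $\varepsilon_k/r_k$. By Step~\ref{Step 2}, at a null iteration one has $\varepsilon_{k+1}=\theta\varepsilon_k$ and $r_{k+1}=\mu r_k$, so the ratio is multiplied by $\theta/\mu<1$, whereas at a non-null iteration both radii are unchanged and the ratio is preserved. Hence $\{\varepsilon_k/r_k\}$ is non-increasing and is strictly reduced by the factor $\theta/\mu$ at every null iteration. Since $\varepsilon_k\downarrow0$, Proposition~\ref{null properties}(ii) guarantees that $\mathcal{N}$ is infinite, so the number of null iterations preceding step $k$ tends to infinity; writing this count as $m_k$, I obtain $\varepsilon_k/r_k=(\theta/\mu)^{m_k}(\varepsilon_1/r_1)\to0$. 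Consequently there exists $N\in\N$ with $\varepsilon_k\le r_k$ for all $k\ge N$.

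Finally I would assemble the pieces. For $k\notin\mathcal{N}$ with $k\ge N$, combining $\varepsilon_k\le r_k<\norm{d^k}$ with the displayed estimate gives
\begin{align*}
\norm{\nabla f(x^k)}\le\norm{d^k}+2\varepsilon_k<\norm{d^k}+2\norm{d^k}=3\norm{d^k},
\end{align*}
which is exactly \eqref{equation grad 3k}. The only delicate point is the ratio argument establishing $\varepsilon_k\le r_k$ eventually, and this is precisely where the assumption $\theta<\mu$ is indispensable: without it the error radius $\varepsilon_k$ need not become dominated by $r_k$, and the clean constant $3$ could fail.
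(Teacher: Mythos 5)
Your proof is correct and follows essentially the same route as the paper: establish that $\varepsilon_k\le r_k$ eventually, then combine $\norm{d^k}=\norm{g^k}-\varepsilon_k>r_k$ at non-null iterations with the triangle inequality $\norm{\nabla f(x^k)}\le\norm{g^k}+\varepsilon_k=\norm{d^k}+2\varepsilon_k$. The only difference is that you spell out the ratio argument $\varepsilon_k/r_k=(\theta/\mu)^{m_k}(\varepsilon_1/r_1)\to 0$ (using the infiniteness of $\mathcal{N}$ from Proposition~\ref{null properties}(ii)), whereas the paper simply asserts that $\varepsilon_k\le r_k$ for large $k$ ``follows directly from the assumptions''; your version makes that step explicit and is a welcome clarification.
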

\begin{proof} It follows directly from the assumptions of the lemma that there exists a natural number $N$ such that $\varepsilon_k\le r_k$ for all $k\ge N$. By Step~\ref{Step 2} of the algorithm, for any $k\ge N$ with $k\notin\mathcal{N}$ we have $\norm{g^k}>r_k+\varepsilon_k$ with the direction $d^k$ calculated in (\ref{choose dk}). Thus for such $k$ we get the estimates
\begin{align}\label{66}
\norm{d^k}=\norm{g^k}-\varepsilon_k>r_k+\varepsilon_k-\varepsilon_k= r_k\ge\varepsilon_k.
\end{align}
It follows from (\ref{calculate approx grad}) in Step~\ref{Step 1} and from (\ref{66}) that
\begin{align*}
\norm{\nabla f(x^k)}\le \norm{g^k}+\varepsilon_k=\norm{d^k}+2\varepsilon_k\le 3\norm{d^k},
\end{align*}
which verifies the conclusion of the lemma.
\end{proof}

The following two theorems provide conditions ensuring the global convergence of iterative sequences generated by Algorithm~\ref{al gd inexact} with different stepsize selections to a stationary point of $f$. The first theorem concerns the IRG methods with the {\em backtracking} stepsize.

\begin{Theorem}\label{convergence under KL}
Let $\set{x^k}$ be the iterative sequence generated by Algorithm~{\rm\ref{al gd inexact}} with the backtracking linesearch such that $\theta<\mu$. Assume that $\rho_k\rightarrow0$ as $k\rightarrow\infty$, $\set{x^k}$ has an accumulation point $\bar x$, and $f$ satisfies the KL property at $\bar x$. Then $\bar x$ is a stationary point of $f$, and we have $x^k\rightarrow\bar x$ as $k\to\infty$.
\end{Theorem}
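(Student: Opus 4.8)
The goal is to prove global convergence of the IRG method with backtracking linesearch to a stationary point under the KL property, so the natural strategy is to verify the hypotheses (H1) and (H2) of Proposition~\ref{general convergence under KL} and then invoke it directly, since that proposition delivers exactly the conclusion $x^k\to\bar x$.

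\textbf{Plan.} First I would apply Theorem~\ref{convergence 1}, whose hypotheses ($\inf f(x^k)>-\infty$ and $\rho_k\to 0$) are in force here: this immediately gives $\varepsilon_k\downarrow 0$ and $r_k\downarrow 0$ (assertion (i)) and that every accumulation point of $\set{x^k}$ is stationary (assertion (ii)). In particular $\bar x$ is stationary, settling the first claim. To obtain the convergence of the whole sequence I would then check (H1) and (H2). For (H2), the complementary descent condition, I would use the equivalences in \eqref{alter condi for null}: the backtracking rule \eqref{defi: tk} guarantees a strict decrease $f(x^{k+1})<f(x^k)$ whenever $d^k\neq 0$, so $f(x^{k+1})=f(x^k)$ forces $d^k=0$, which by Proposition~\ref{null properties}(i) gives $x^{k+1}=x^k$; hence (H2) holds for all $k$.

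\textbf{The main step — verifying (H1).} This is where the real work lies. Since $\theta<\mu$ and $\varepsilon_k\downarrow 0$, $r_k\downarrow 0$, Lemma~\ref{gradien 3 dk} supplies some $N$ with $\norm{\nabla f(x^k)}\le 3\norm{d^k}$ for all $k\ge N$ with $k\notin\mathcal{N}$. For null iterations $k\in\mathcal{N}$ both sides of the (H1) inequality vanish since $x^{k+1}=x^k$, so (H1) holds trivially there and it suffices to handle $k\notin\mathcal{N}$. Combining the backtracking estimate \eqref{tk dk}, namely $f(x^k)-f(x^{k+1})\ge\beta t_k\norm{d^k}^2$, with $\norm{x^{k+1}-x^k}=t_k\norm{d^k}$ gives
\begin{align*}
f(x^k)-f(x^{k+1})\ge\beta\, t_k\norm{d^k}^2=\beta\norm{d^k}\cdot\norm{x^{k+1}-x^k}\ge\dfrac{\beta}{3}\norm{\nabla f(x^k)}\cdot\norm{x^{k+1}-x^k}
\end{align*}
for all $k\ge N$ with $k\notin\mathcal{N}$, which is exactly (H1) with $\sigma:=\beta/3$. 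The subtlety I would watch for is that Lemma~\ref{gradien 3 dk} gives the gradient bound only off $\mathcal{N}$; I must therefore argue separately (and cleanly) that the null iterations cause no trouble, which they do not precisely because they leave both $f$ and $x^k$ unchanged.

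\textbf{Conclusion.} With (H1) and (H2) established and $f$ satisfying the KL property at the accumulation point $\bar x$, Proposition~\ref{general convergence under KL} yields $x^k\to\bar x$ as $k\to\infty$. The stationarity of $\bar x$ having already been obtained from Theorem~\ref{convergence 1}(ii), the proof is complete. The one genuine obstacle is the bookkeeping around $\mathcal{N}$ in checking (H1); the descent inequality and the factor-of-3 gradient bound are both already available from earlier results, so the argument is essentially an assembly of Theorem~\ref{convergence 1}, Lemma~\ref{gradien 3 dk}, and Proposition~\ref{general convergence under KL}.
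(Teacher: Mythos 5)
Your overall architecture coincides with the paper's proof: derive stationarity of $\bar x$ together with $\varepsilon_k\downarrow 0$, $r_k\downarrow 0$ from Theorem~\ref{convergence 1}, use Lemma~\ref{gradien 3 dk} (which needs $\theta<\mu$ and these convergences) to obtain the factor-of-3 gradient bound, verify (H1) with $\sigma=\beta/3$ and (H2) from the strict decrease under backtracking, handle the null iterations by noting both sides of (H1) vanish there, and conclude via Proposition~\ref{general convergence under KL}. All of that is exactly what the paper does.

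There is, however, one genuine gap at the very first step: you assert that the hypotheses of Theorem~\ref{convergence 1}, namely $\inf f(x^k)>-\infty$ and $\rho_k\to 0$, ``are in force here.'' The second is indeed assumed in Theorem~\ref{convergence under KL}, but the first is \emph{not} among its hypotheses; the present theorem assumes only the existence of an accumulation point and the KL property there. The lower boundedness of $\set{f(x^k)}$ must be derived, and this is precisely where the accumulation point is used a first time. The paper's argument is: the backtracking rule \eqref{defi: tk} makes $\set{f(x^k)}$ nonincreasing; since $x^k\overset{J}{\rightarrow}\bar x$ along some infinite set $J\subset\N$ and $f$ is continuous, monotonicity yields
\begin{align*}
\inf_{k\in\N} f(x^k)=\inf_{k\in J} f(x^k)=f(\bar x)>-\infty.
\end{align*}
Without this step you are not entitled to invoke Theorem~\ref{convergence 1}, so the chain of conclusions you build on it (stationarity, $\varepsilon_k\downarrow 0$, $r_k\downarrow 0$, and hence the applicability of Lemma~\ref{gradien 3 dk}) is unsupported as written. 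The repair is only a couple of lines, but it is a necessary piece of the proof rather than a formality, since nothing in the algorithm by itself rules out $f(x^k)\to-\infty$ in the absence of an accumulation point. Everything else in your proposal is correct and matches the paper's argument.
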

\begin{proof}
Since $\bar x$ is an accumulation point of $\set{x^k}$, we can find some infinite set $J\subset\N$ such that $x^k\overset{J}{\rightarrow}\bar x$. It follows from the choice of $\set{t_k}$ in \eqref{defi: tk} that $\set{f(x^k)}$ is nonincreasing, which implies that 
\begin{align*}
\inf_{k\in \N} f(x^k)= \inf_{k\in J} f(x^k)=f(\bar x)>-\infty.
\end{align*}
Therefore, the results of Theorem~\ref{convergence 1} tell us that $\bar x$ is a stationary point of $f$ and that $\varepsilon_k\dn 0$, $r_k\downarrow 0$ as $k\to\infty$. We employ Proposition~\ref{general convergence under KL} to verify that $x^k\rightarrow\bar x$ along the entire sequence of iterations. Indeed, the imposed assumptions and the convergence $\varepsilon_k\dn 0$, $r_k\downarrow 0$ as $k\to\infty$ guarantee that all the requirements  of Lemma~\ref{gradien 3 dk} are satisfied. Pick $N\in\N$ such that (\ref{equation grad 3k}) holds. The choice of $\set{t_k}$ in (\ref{defi: tk}) ensures the lower estimate
\begin{align}\label{5.133}
f(x^{k})-f(x^{k+1})&\ge\beta t_k\big\|d^k\big\|^2\;\text{ for all }\;k\in\N.
\end{align}
Combining this with (\ref{equation grad 3k}) and the relation $x^{k+1}=x^k+t_kd^k$ yields
\begin{align}\label{5.13}
f(x^{k})-f(x^{k+1})\ge\dfrac{\beta}{3}\norm{\nabla f(x^k)}\cdot\norm{x^{k+1}-x^k}\;\text{ for all }\;k\notin\mathcal{N},\;k\ge N.
\end{align}
Observe that when $k\in\mathcal{N}$, both sides of
(\ref{5.13}) reduce to zero, and so (\ref{5.13}) is satisfied. Therefore, assumption (H1) in Proposition~\ref{general convergence under KL} holds. Moreover, for $k\ge N$ the conditions $f(x^{k+1})=f(x^k)$ and (\ref{5.133}) imply that $d^k=0$, and hence $x^{k+1}=x^k$. Thus assumption (H2) in  Proposition~\ref{general convergence under KL} is satisfied as well. Applying the latter proposition, we arrive at $x^k\rightarrow\bar x$ as $k\to\infty$ and complete the proof.
\end{proof}

The second theorem of the above type addresses the IRG methods with {\em diminishing} and {\em constant} selections of the stepsize sequence $\set{t_k}$.

\begin{Theorem}\label{convergence under KL 2}
Let the objective function $f$ satisfy the $L$-descent condition \eqref{descent condition} for some $L>0$, and let $\set{x^k}$ be the sequence generated by Algorithm~{\rm\ref{al gd inexact}} with $\theta<\mu$, $\rho_k=\varepsilon_k$ for all $k\in\N$, and either diminishing \eqref{diminishing} or constant stepsizes \eqref{constant}. Assume in addition that $\bar x$ is an accumulation point of the sequence $\set{x^k}$ and that $f$ satisfies the KL property at $\bar x$. Then $\bar x$ is a stationary point of $f$, and we have the convergence $x^k\rightarrow\bar x$ as $k\to\infty$.
\end{Theorem}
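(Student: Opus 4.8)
The plan is to mirror the proof of Theorem~\ref{convergence under KL}, reducing the convergence claim to Proposition~\ref{general convergence under KL} by verifying its two descent requirements (H1) and (H2). Before doing so, I would harvest the auxiliary conclusions of Theorem~\ref{convergence constant}: that $\bar x$ is a stationary point and that $\varepsilon_k\downarrow 0$, $r_k\downarrow 0$. The only obstruction to invoking that theorem directly is that it presupposes $\inf_k f(x^k)>-\infty$, which is not assumed here, so the first genuine task is to establish this boundedness from below.

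To obtain $\inf_k f(x^k)>-\infty$, I would use the descent estimate~\eqref{descent of dimi and constant} derived in the proof of Corollary~\ref{convergence diminishing}. It holds under the $L$-descent condition together with the sufficient descent property (Proposition~\ref{prop sufi holds}, which gives $\kappa=1$) and $t_k\le(2-\delta)/L$ for large $k$, a bound met by both the diminishing choice~\eqref{diminishing} and the constant choice~\eqref{constant}. Hence $\set{f(x^k)}$ is nonincreasing for $k\ge N$. Since $\bar x$ is an accumulation point, continuity of $f$ forces some subsequence of $\set{f(x^k)}$ to converge to $f(\bar x)$; an eventually nonincreasing real sequence with a finite subsequential limit converges to that limit, so $\inf_k f(x^k)=f(\bar x)>-\infty$. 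Feeding this into Theorem~\ref{convergence constant} then delivers stationarity of $\bar x$ together with $\varepsilon_k\downarrow 0$ and $r_k\downarrow 0$.

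With $\varepsilon_k\downarrow 0$, $r_k\downarrow 0$, and $\theta<\mu$ in force, Lemma~\ref{gradien 3 dk} supplies some $N$ with $\norm{\nabla f(x^k)}\le 3\norm{d^k}$ for all $k\notin\mathcal N$, $k\ge N$. For such $k$, combining~\eqref{descent of dimi and constant} with $\norm{x^{k+1}-x^k}=t_k\norm{d^k}$ and this gradient bound yields
\[
f(x^k)-f(x^{k+1})\ge\frac{\delta}{2}t_k\norm{d^k}^2=\frac{\delta}{2}\norm{d^k}\cdot\norm{x^{k+1}-x^k}\ge\frac{\delta}{6}\norm{\nabla f(x^k)}\cdot\norm{x^{k+1}-x^k},
\]
so (H1) holds with $\sigma=\delta/6$; when $k\in\mathcal N$ both sides vanish since $x^{k+1}=x^k$. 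For (H2), the same estimate shows that $f(x^{k+1})=f(x^k)$ forces $t_k\norm{d^k}^2=0$, and since $t_k>0$ throughout (it stays positive in~\eqref{diminishing} and satisfies $t_k\ge\delta'$ in~\eqref{constant}), this gives $d^k=0$ and hence $x^{k+1}=x^k$.

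Having verified (H1) and (H2), Proposition~\ref{general convergence under KL} together with the KL property of $f$ at $\bar x$ yields $x^k\to\bar x$, which completes the argument. The main obstacle I anticipate is the bookkeeping around the null iterations $\mathcal N$: one must check that (H1) survives the case $k\in\mathcal N$ and that the $L$-descent bound~\eqref{descent of dimi and constant} really holds on a common tail for both stepsize rules. Establishing $\inf_k f(x^k)>-\infty$ \emph{before} invoking Theorem~\ref{convergence constant} — so as to avoid circularity — is the other delicate point, and it is resolved by the eventual-monotonicity argument described above.
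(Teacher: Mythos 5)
Your proposal is correct and follows essentially the same route as the paper's own proof: establish $\inf_k f(x^k)>-\infty$ via the eventual monotonicity of $\set{f(x^k)}$ coming from the descent estimate in Corollary~\ref{convergence diminishing}, invoke Theorem~\ref{convergence constant} to get stationarity of $\bar x$ together with $\varepsilon_k\downarrow 0$ and $r_k\downarrow 0$, then combine that descent estimate with Lemma~\ref{gradien 3 dk} to verify (H1) with $\sigma=\delta/6$ and (H2), and conclude by Proposition~\ref{general convergence under KL}. In fact you supply explicitly several details the paper compresses into ``similarly to the proof of\dots'' (the handling of null iterations in (H1), the positivity of $t_k$ in (H2), and the common tail on which the descent bound holds for both stepsize rules), so the proposal is a faithful and slightly more self-contained version of the paper's argument.
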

\begin{proof} Observe first that the assumptions imposed here yield those in Theorem~\ref{convergence constant} and Corollary~\ref{convergence diminishing} but $\inf f(x^k)>-\infty$. Similarly to the proof of Corollary~\ref{convergence diminishing}, we can show that $\set{f(x^k)}_{k\ge K}$ is nonincreasing for some $K\in\N.$ Since $\bar x$ is an accumulation point of $\set{x^k}$, similarly to the proof of Theorem~\ref{convergence under KL}, we deduce that $\inf f(x^k)=f(\bar x)>-\infty$, which verifies 
 the remaining assumption. Therefore, $\bar x$ is a stationary point of $f$ and $\varepsilon_k\dn 0$, $r_k\downarrow0$ as $k\to\infty$. The latter convergence together with the imposed assumptions guarantee the fulfillment of all the conditions of Lemma~\ref{gradien 3 dk}. Let $N\in\N$ be such that (\ref{equation grad 3k}) holds. Let $\delta>0$ be the constant given in (\ref{constant}). From the proof of Corollary~\ref{convergence diminishing}, we find some $N_1\ge N$ such that 
\begin{align}\label{latt}
f(x^{k})-f(x^{k+1})&\ge \dfrac{\delta}{2}t_k\norm{d^k}^2\;\text{ for all }\;k\ge N_1.
\end{align}
The relation $x^{k+1}=x^k+t_kd^k$ and (\ref{equation grad 3k}), (\ref{latt}) tell us that
\begin{align}\label{5.13 2}
f(x^{k})-f(x^{k+1})&\ge \dfrac{\delta}{6}\norm{\nabla f(x^k)}\norm{x^{k+1}-x^k}\;\text{ whenever }\;k\notin \mathcal{N},\;k\ge N_1.
\end{align}
Similarly to the proof of Theorem~\ref{convergence under KL}, we get $x^k\rightarrow\bar x$ as $k\to\infty$ and thus complete the proof.
\end{proof}

We'll see below that the boundedness of stepsizes away from $0$ plays a crucial role in establishing the {\em rate of convergence} of the IRG methods. This property automatically holds for constant stepsizes while may fail for diminishing ones. The next proposition shows that the boundedness from below is satisfied for the backtracking stepsize selection provided that the gradient of the objective function is {\em locally Lipschitzian} around accumulation points of iterative sequence. Observe that this property is strictly weaker than the (global) Lipschitz continuous of $\nabla f$. Indeed, $\mathcal{C}^2$-smooth functions have locally Lipschitzian gradients but do not need to have a globally Lipschitzian one as, e.g., for $f(x):=x^4$.

\begin{Proposition}\label{bounded away}
Let $\set{x^k}$ be a sequence generated by Algorithm~{\rm\ref{al gd inexact}} with the backtracking stepsize. Suppose that $\rho_k\rightarrow0$ as $k\rightarrow\infty$ and that there exists an infinite set $J\subset\N$ such that $\set{x^k}_{k\in J}$ converges to some $\bar x$. Assume further that $\nabla f$ is locally Lipschitzian around $\bar x$. Then the stepsize sequence $\set{t_k}_{k\in J}$ is bounded away from zero.
\end{Proposition}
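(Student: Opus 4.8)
The plan is to treat null and non-null iterations separately, the only nontrivial case being the non-null indices $k\in J$ for which the Armijo backtracking genuinely reduces the trial stepsize. For the null iterations one has $d^k=0$ and hence $t_k=\tau>0$ by construction, so these are harmless. Thus it suffices to produce a uniform lower bound for $t_k$ over the non-null indices in $J$ that are large enough.

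First I would exploit the local Lipschitz hypothesis. Choose $\delta>0$ and $L>0$ so that $\nabla f$ is $L$-Lipschitz on $\B(\bar x,\delta)$; by the standard descent lemma (\cite[Proposition~A.24]{bertsekasbook}), the quadratic upper bound $f(y)\le f(x)+\dotproduct{\nabla f(x),y-x}+\tfrac{L}{2}\norm{y-x}^2$ holds for all $x,y\in\B(\bar x,\delta)$. To invoke this bound along the test segment $[x^k,x^k+td^k]$, I must keep that segment inside $\B(\bar x,\delta)$, and this is where $x^k\overset{J}{\rightarrow}\bar x$ enters: for $k\in J$ large we have $x^k\in\B(\bar x,\delta/2)$, and I also need $\set{d^k}_{k\in J}$ bounded. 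The latter follows because $\norm{g^k-\nabla f(x^k)}\le\rho_k\to 0$ together with $x^k\overset{J}{\rightarrow}\bar x$ and continuity of $\nabla f$ give $g^k\overset{J}{\rightarrow}\nabla f(\bar x)$, whence $\norm{d^k}\le\norm{g^k}\le G$ for some $G>0$ and all large $k\in J$ (using $\norm{d^k}\le\norm{g^k}$ from Proposition~\ref{goes to 0}). Consequently, for every $t$ with $0<t\le \delta/(2G)$ the whole segment lies in $\B(\bar x,\delta)$.

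Next I would combine the descent lemma with the sufficient descent estimate $\dotproduct{\nabla f(x^k),d^k}\le-\norm{d^k}^2$ from Proposition~\ref{prop sufi holds}. For $t\le\delta/(2G)$ this yields $f(x^k+td^k)\le f(x^k)-t\norm{d^k}^2\bigl(1-\tfrac{Lt}{2}\bigr)$, so the Armijo test $f(x^k+td^k)\le f(x^k)-\beta t\norm{d^k}^2$ holds as soon as $1-\tfrac{Lt}{2}\ge\beta$, i.e. $t\le 2(1-\beta)/L$. Setting $c:=\min\set{2(1-\beta)/L,\ \delta/(2G)}>0$, I conclude that the Armijo inequality holds for every $t\in(0,c]$ and every large non-null $k\in J$. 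Since the backtracking rule accepts the largest power of $\gamma$ satisfying this inequality and every power of $\gamma$ lying in $(0,c]$ is accepted, the accepted value is at least the largest such power, which exceeds $\gamma c$; hence $t_k>\gamma c$. Combining this with $t_k=\tau$ on null iterations and the positivity of the finitely many remaining $t_k$, the sequence $\set{t_k}_{k\in J}$ is bounded away from zero by $\min\set{\tau,\gamma c}>0$.

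The main obstacle is the geometric bookkeeping needed to guarantee that the trial segments stay inside the Lipschitz ball: without a uniform bound on $\norm{d^k}$ along $J$ one cannot assert that the quadratic upper bound is available at the tested points, and it is precisely the combination of $x^k\overset{J}{\rightarrow}\bar x$ with the error control $\rho_k\to 0$ (forcing $\set{g^k}_{k\in J}$, and hence $\set{d^k}_{k\in J}$, to be bounded) that supplies this. Once the descent lemma is legitimately in force on the tested segments, the lower bound on $t_k$ reduces to the classical Armijo argument.
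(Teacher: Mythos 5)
Your proof is correct, but it takes a genuinely different route from the paper's. The paper argues by contradiction: assuming $\set{t_k}_{k\in J}$ is not bounded away from zero, it extracts $\bar J\subset J$ with $t_k\overset{\bar J}{\longrightarrow}0$, notes that eventually $t_k<\tau<1$ so the preceding trial value $\gamma^{-1}t_k$ must have \emph{failed} the Armijo test, applies the descent lemma at the single rejected trial point $x^k+\gamma^{-1}t_kd^k$ (which tends to $\bar x$ precisely because $t_k\to 0$ and $\set{d^k}_{k\in\bar J}$ is bounded), and then, after dividing the resulting inequality by $\gamma^{-1}t_k\dotproduct{\nabla f(x^k),-d^k}>0$, lets $k\to\infty$ to get $\beta\ge 1$, contradicting $\beta\in(0,1)$. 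You instead argue directly, showing that \emph{every} trial value $t\le c:=\min\set{2(1-\beta)/L,\;\delta/(2G)}$ passes the Armijo test, which yields an explicit quantitative lower bound on $t_k$. Both proofs share the same analytical core: boundedness of $\set{d^k}$ along $J$ obtained from $\rho_k\to 0$ and $g^k\to\nabla f(\bar x)$, the sufficient descent inequality of Proposition~\ref{prop sufi holds}, and the descent lemma on a ball where $\nabla f$ is Lipschitz. Your version buys an explicit constant; the price is the segment-containment bookkeeping ($t\le\delta/(2G)$), which the paper avoids since along its subsequence the convergence $t_k\to 0$ automatically drives the one relevant trial point into the ball. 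One tiny slip in your last step: the claim that the accepted stepsize ``exceeds $\gamma c$'' can fail in the harmless edge case $c\ge 1$, where $t_k=1$ while possibly $1\le\gamma c$; the correct uniform bound is $t_k\ge\min\set{1,\gamma c}$, which of course still gives the conclusion.
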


\begin{proof}
 Assume on the contrary that $\set{t_k}_{k\in J}$ is not bounded away from zero. Then we find an infinite set $\bar J\subset J$ such that $t_k\overset{\bar J}{\longrightarrow}0$. Let $\tau\in(0,1)$ be an artificial stepsize of null iterations. Since $t_k\overset{\bar J}{\longrightarrow}0$, there exists a number $N\in\N$ such that 
 \begin{align}\label{tk < tau}
t_k<\tau<1\;\text{ for all }\;k\ge N,\;k\in\bar J.
 \end{align}
 This means that $k\notin\mathcal{N}$ whenever $k\ge N,\;k\in\bar J$. By Proposition~\ref{null properties}(i), we have $d^k\ne 0$ for all $k\ge N,\;k\in\bar J$. Then condition (\ref{choose dk}) in Step~\ref{Step 2} leads us to
\begin{align}\label{norm dk < norm gk}
\norm{d^k}=\norm{g^k}-\varepsilon_k\le\norm{g^k}\;\mbox{ for all }\;k\ge N,\;k\in\bar J.
\end{align}
Since $x^k\overset{\bar J}{\rightarrow}\bar x$, the continuity of $\nabla f$ and the estimate $\norm{\nabla f(x^k)-g^k}\le \rho_k\rightarrow0$ yield that $g^k\overset{\bar J}{\rightarrow}\nabla f(\bar x)$. Using (\ref{norm dk < norm gk}), we get that the sequence $\set{d^k}_{k\in\bar J}$ is bounded, and thus 
\begin{align}\label{bound of norm dk}
x^k+\gamma^{-1} t_kd^k\rightarrow\bar x\;\mbox{ as }\;k\overset{\bar J}{\rightarrow}\infty.
\end{align}
Since $\nabla f$ is locally Lipschitzian around $\bar x$, there exists a positive number $\delta$ such that $\nabla f$ is Lipschitz continuous on $\mathbb{B}(\bar x,\delta)$ with some modulus $L>0$. By (\ref{bound of norm dk}) and $x^k\overset{\bar J}{\rightarrow}\bar x$, we find $N_1\ge N$ with $x^k,x^k+\gamma^{-1}t_kd^k\in\mathbb{B}(\bar x,\delta)$ for all $k\ge N_1,\;k\in\bar J$. The Lipschitz continuity of $\nabla f$ on $\mathbb{B}(\bar x,\delta)$ with modulus $L$ yields by \cite[Proposition~A.24]{bertsekasbook} the $L$-descent condition, i.e.,
\begin{align}\label{lipschitz}
f(x)-f(y)\le\dotproduct{\nabla f(y),x-y}+\dfrac{L}{2}\norm{x-y}^2\;\text{ for all }\;x,y\in \mathbb{B}(\bar x,\delta).
\end{align}
Fixing $k\in\bar J,\;k\ge N_1$, we deduce from the above that $d^k\ne 0$, and $t_k<1$. The exit condition for the backtracking line search implies that
\begin{align}\label{exit 2}
-\gamma^{-1}\beta t_k\norm{d^k}^2<f(x^{k}+\gamma^{-1}t_kd^k)-f(x^k).
\end{align}
Applying (\ref{lipschitz}) for $x=x^{k}+\gamma^{-1}t_kd^k$ and $y=x^k$ we have that
\begin{align*}
f(x^{k}+\gamma^{-1}t_kd^k)-f(x^k)\le\gamma^{-1}t_k\dotproduct{\nabla f(x^k),d^k}+\dfrac{L\gamma^{-2}t_k^2}{2}\norm{d^k}^2.
\end{align*}
Combining this with (\ref{exit 2}) leads us to 
\begin{align*}
-\gamma^{-1}\beta t_k\norm{d^k}^2<\gamma^{-1}t_k\dotproduct{\nabla f(x^k),d^k}+\dfrac{L\gamma^{-2}t_k^2}{2}\norm{d^k}^2,
\end{align*}
or equivalently to the inequality 
\begin{align}\label{0< gamma-1}
0<\gamma^{-1}\beta t_k\norm{d^k}^2+\gamma^{-1}t_k\dotproduct{\nabla f(x^k),d^k}+\dfrac{L\gamma^{-2}t_k^2}{2}\norm{d^k}^2.
\end{align}
Proposition~\ref{prop sufi holds} and $d^k\ne 0$ tell us that $0<\norm{d^k}^2\le\dotproduct{\nabla f(x^k),-d^k}$. Then we deduce from (\ref{0< gamma-1}) the fulfillment of the estimate
\begin{align*}
0<\gamma^{-1}\beta t_k\dotproduct{\nabla f(x^k),-d^k}+\gamma^{-1}t_k\dotproduct{\nabla f(x^k),d^k}+\dfrac{L\gamma^{-2}t_k^2}{2}\dotproduct{\nabla f(x^k),-d^k}.
\end{align*}
Dividing both sides above by $\gamma^{-1}t_k\dotproduct{\nabla f(x^k),-d^k}>0$, we get $0<\beta-1+\dfrac{L\gamma^{-1}t_k}{2}$.
Letting $k\overset{\bar J}{\longrightarrow}\infty$ yields $\beta\ge 1$, which contradicts the choice of $\beta\in(0,1)$. Thus we verify that the sequence $\set{t_k}_{k\in J}$ is bounded away from zero, which completes the proof of the proposition.
\end{proof}

The last two theorems establish efficient conditions ensuring the {\em convergence rates} in Algorithm~\ref{al gd inexact} under different stepsize selections. Having the sequence of iterations $\set{x^k}$ generated by this algorithm, we obtain first from Proposition~\ref{null properties}(iii) that if $\N\setminus\mathcal{N}$ is finite, then $\set{x^k}$ stops after a finite number of iterations. Thus we consider the case where the set $\N\setminus\mathcal{N}$ is infinite and can be numerated as $\set{j_1,j_2,\ldots}$. Construct the sequence $\set{z^k}$ by
\begin{align}\label{z_k}
 z^k:=x^{j_k}\;\text{ for all }\;k\in\N.
\end{align}
We have $j_{k+1}\ge j_{k}+1$ whenever $k\in\N$. If the equality holds therein, then $z^{k+1}=z^{j_k+1}$. Otherwise, by taking into account that the indices $j_{k}+1,\ldots,j_{k+1}-1$ correspond to null iterations, we get that
\begin{align}\label{5 23}
x^{j_k+1}=x^{j_{k}+2}=\ldots=x^{j_{k+1}-1}=x^{j_{k+1}}=z^{k+1}.
\end{align}
Therefore, it follows from $j_k\notin\mathcal{N}$ that
\begin{align}\label{differ}
z^{k+1}=x^{j_k+1}\ne x^{j_k}=z^k\;\text{ for all }\;k\in\N.
\end{align}
Furthermore, Proposition~\ref{null properties}(iv) tells us that $\bar x$ is an accumulation point of $\set{z^k}$ if and only if $\bar x$ is also an accumulation point of $\set{x^k}$.\vspace*{0.05in}

The first theorem about the convergence rates concerns  Algorithm~\ref{al gd inexact} with the {\em backtracking stepsize}.

\begin{Theorem} \label{rate} 
Consider Algorithm~{\rm\ref{al gd inexact}} with the backtracking stepsize selections under the condition $\theta<\mu$. Let $\set{x^k}$ be the iterative sequence generated by this algorithm. Suppose that $\rho_k\rightarrow0$ as $k\rightarrow\infty$. Assume further that $\set{x^k}$ has an accumulation point $\bar x$, that $f$ satisfies the KL property at $\bar x$ with $\psi(t)=Mt^{q}$ for some $M>0$ and $q\in(0,1)$, and that $\nabla f$ is locally Lipschitzian around $\bar x$.  The following convergence rates are guaranteed for the sequence $\set{z^k}$ defined in \eqref{z_k}:
\begin{itemize}
\item[\bf(i)] If $q\in(0,1/2]$, then the sequence $\set{z^k}$ converges linearly to $\bar x$.
\item[\bf(ii)] If $q\in(1/2,1)$, then there exists a positive constant $\varrho$ such that 
\begin{align*}
 \norm{z^k-\bar x}\le\varrho k^{-\frac{1-q}{2q-1}}\;\text{ for all large }\;k\in\N.
\end{align*}
\end{itemize}
\end{Theorem}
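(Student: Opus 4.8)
The plan is to reduce the entire statement to the abstract rate result Theorem~\ref{general rate}, applied not to $\set{x^k}$ itself but to the compressed sequence $\set{z^k}$ from \eqref{z_k} that records only the non-null iterations. First I would invoke Theorem~\ref{convergence under KL} to register that, under the present hypotheses, $\bar x$ is stationary, $x^k\to\bar x$, and $\varepsilon_k\dn 0$, $r_k\dn 0$ as $k\to\infty$. If $\N\setminus\mathcal{N}$ is finite, then, as already noted in the discussion preceding the theorem via Proposition~\ref{null properties}(iv), the sequence $\set{x^k}$ is eventually constant and equal to its limit $\bar x$, so both rate bounds hold trivially. Hence I may assume $\N\setminus\mathcal{N}=\set{j_1,j_2,\ldots}$ is infinite and work with $z^k:=x^{j_k}$.

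The core of the argument is to verify the two estimates in \eqref{two conditions} for $\set{z^k}$ with the stepsizes $\set{t_{j_k}}$. Using $z^{k+1}=x^{j_k+1}$ together with $z^{k+1}-z^k=t_{j_k}d^{j_k}$, which follow from \eqref{5 23} and the update rule, the backtracking exit inequality \eqref{defi: tk} gives $f(z^k)-f(z^{k+1})\ge\beta t_{j_k}\norm{d^{j_k}}^2=\frac{\beta}{t_{j_k}}\norm{z^{k+1}-z^k}^2$, which is the first condition with the line search constant $\beta$. For the second estimate I would apply Lemma~\ref{gradien 3 dk}: since $\varepsilon_k\dn 0$ and $r_k\dn 0$, there is some $N$ with $\norm{\nabla f(x^{j_k})}\le 3\norm{d^{j_k}}$ whenever $j_k\ge N$, so $\norm{\nabla f(z^k)}\le\frac{3}{t_{j_k}}\norm{z^{k+1}-z^k}$ for all large $k$, which is the second condition with $c=3$.

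The hard part will be securing the remaining hypothesis of Theorem~\ref{general rate}, namely that the stepsizes stay bounded away from zero; this is precisely why local Lipschitz continuity of $\nabla f$ around $\bar x$ is assumed. That obstacle is already isolated in Proposition~\ref{bounded away}: since $z^k\to\bar x$ and $\rho_k\to 0$, the subsequence $\set{t_{j_k}}$ is bounded away from $0$. With this in hand, the leftover requirements of Theorem~\ref{general rate} are routine to check — $z^{k+1}\ne z^k$ holds by \eqref{differ}, $\bar x$ is an accumulation point of $\set{z^k}$ by Proposition~\ref{null properties}(iv), and $f$ satisfies the KL property at $\bar x$ with $\psi(t)=Mt^q$ by assumption.

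Having verified all hypotheses of Theorem~\ref{general rate} for $\set{z^k}$ (with constants $\beta$ and $c=3$), I would then read off the two regimes directly: $q\in(0,1/2]$ yields linear convergence of $\set{z^k}$ to $\bar x$, giving (i), while $q\in(1/2,1)$ yields $\norm{z^k-\bar x}\le\varrho k^{-\frac{1-q}{2q-1}}$ for large $k$, giving (ii). The only conceptual point worth flagging is that the rate is stated in the index $k$ of the compressed sequence rather than in the raw iteration counter, which is the natural formulation because null iterations leave $x^k$ unchanged and contribute no genuine progress.
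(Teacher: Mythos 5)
Your proposal is correct and follows essentially the same route as the paper's proof: invoke Theorem~\ref{convergence under KL} for $x^k\to\bar x$ and $\varepsilon_k\dn 0$, $r_k\dn 0$; use Proposition~\ref{bounded away} (via the local Lipschitz assumption) to bound the stepsizes $\tau_k=t_{j_k}$ away from zero; verify the two estimates in \eqref{two conditions} for $\set{z^k}$ through the backtracking exit inequality and Lemma~\ref{gradien 3 dk} with $c=3$; and conclude by applying Theorem~\ref{general rate}. The only cosmetic differences are that you apply Proposition~\ref{bounded away} directly along the subsequence $\set{j_k}$ rather than to the full sequence $\set{t_k}$, and that you explicitly dispose of the case where $\N\setminus\mathcal{N}$ is finite, which the paper handles in the discussion preceding the theorem.
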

\begin{proof}
The imposed assumptions yield the fulfillment of those in Theorem~\ref{convergence under KL}, and so lead us to the convergence $x^k\rightarrow\bar x$ as $k\to\infty$. Then the local Lipschitz continuity of $\nabla f$ around $\bar x$ and Proposition~\ref{bounded away} ensure that the sequence $\set{t_k}$ is bounded away from zero.
 
 To deduce now the claimed convergence rates in (i)--(iii) from Theorem~\ref{general rate}, define $\tau_k:=t_{j_k}$ for all $k\in\N$. Then $\set{\tau_k}$ is also bounded away from zero as a subsequence of $\set{t_k}$. Furthermore, using (\ref{5 23}) and the line search conditions, we have
\begin{align}\label{beta tau k}
 f(z^k)-f(z^{k+1})&=f(x^{j_{k}})-f(x^{j_{k}+1})\ge\beta t_{j_k}\norm{d^{j_k}}^2\nonumber\\
 &=\dfrac{\beta}{t_{j_k}}\norm{x^{j_k+1}-x^{j_k}}^2=\dfrac{\beta}{\tau_k}\norm{z^{k+1}-z^k}^2
\end{align}
for all $k\in\N$. Note that all the assumptions of Theorem~\ref{convergence 1} are satisfied, and so Lemma~\ref{gradien 3 dk} holds. Pick any $N\in\N$ from (\ref{equation grad 3k}) and fix $k\ge N$. Then using (\ref{5 23}) and (\ref{equation grad 3k}) with taking into account that $j_k\notin\mathcal{N}$ for $j_k\ge k$ leads us to
\begin{align*}
\norm{\nabla f(z^k)}=\norm{\nabla f(x^{j_k})}&\le \dfrac{3}{t_{j_k}}\norm{x^{j_k+1}-x^{j_k}}=\dfrac{3}{\tau_k}\norm{z^{k+1}-z^k}.
\end{align*}
Apply finally Theorem~\ref{general rate} to $\set{z^k}$ and $\set{\tau_k}$ while remembering that $z^{k+1}\ne z^k$ for all $k\in\N$ from (\ref{differ}). This verifies the convergence rates (i)--(iii) claimed in the theorem.
\end{proof}

The next theorem on the convergence rates addresses Algorithm~\ref{al gd inexact} with the {\em constant stepsizes}.

\begin{Theorem}\label{rate constant}
Let $f$ satisfy the $L$-descent condition for some $L>0$, and let $\set{x^k}$ be the iterative sequence generated by Algorithm~{\rm\ref{al gd inexact}} with the constant stepsizes \eqref{constant} under the condition $\theta<\mu$ and the selection $\rho_k=\varepsilon_k$ for all $k\in\N$. Suppose that $\set{x^k}$ has an accumulation point $\bar x$ and that $f$ satisfies the KL property at $\bar x$ with $\psi(t)=Mt^{q}$ for some $M>0$ and $q\in(0,1)$. Then the following convergence rates are guaranteed for the iterative  sequence $\set{z^k}$ defined in \eqref{z_k}:
\begin{itemize}
\item[\bf(i)] If $q\in(0,1/2]$, then the sequence $\set{z^k}$ converges linearly to $\bar x$.
\item[\bf(ii)] If $q\in(1/2,1)$, then there exists a positive constant $\varrho$ such that 
\begin{align*}
\norm{z^k-\bar x}\le\varrho k^{-\frac{1-q}{2q-1}}\;\text{ for all large }\;k\in\N.
\end{align*}
\end{itemize}
\end{Theorem}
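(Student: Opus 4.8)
The plan is to reduce this result to the already-established Theorem~\ref{general rate} by verifying its two hypotheses in \eqref{two conditions} for the subsequence $\set{z^k}$ together with a suitable stepsize sequence that is bounded away from zero. Since the stepsizes here are constant, i.e.\ $t_k\in\sbrac{\delta',\frac{2-\delta}{L}}$ from \eqref{constant}, the boundedness away from zero is immediate (with lower bound $\delta'$); this is precisely the feature that was the delicate point for the backtracking case in Theorem~\ref{rate} and required Proposition~\ref{bounded away}, but here it costs nothing.

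First I would observe that the standing assumptions subsume those of Theorem~\ref{convergence under KL 2}, so that theorem applies and gives both that $\bar x$ is stationary and that the entire sequence converges, $x^k\rightarrow\bar x$ as $k\to\infty$; in particular $\varepsilon_k\downarrow 0$ and $r_k\downarrow 0$, so Lemma~\ref{gradien 3 dk} is available and yields some $N\in\N$ with $\norm{\nabla f(x^k)}\le 3\norm{d^k}$ for all $k\notin\mathcal{N}$, $k\ge N$. Next I would handle the degenerate case: by Proposition~\ref{null properties}(iii), if $\N\setminus\mathcal{N}$ is finite the algorithm terminates after finitely many genuine iterations and the convergence-rate statement is vacuous; so I assume $\N\setminus\mathcal{N}$ is infinite and work with the subsequence $\set{z^k}=\set{x^{j_k}}$ introduced in \eqref{z_k}, recalling from \eqref{differ} that $z^{k+1}\ne z^k$ for all $k$, and from Proposition~\ref{null properties}(iv) that $\bar x$ is an accumulation point of $\set{z^k}$.

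The core of the argument is then to verify the two estimates in \eqref{two conditions} for $\set{z^k}$ with $\tau_k:=t_{j_k}$, mirroring the proof of Theorem~\ref{rate}. For the first (descent) estimate, I would use the relation \eqref{5 23} (null iterations between $j_k$ and $j_{k+1}$ leave the point unchanged) together with the constant-stepsize descent inequality \eqref{latt} derived in the proof of Corollary~\ref{convergence diminishing}, giving
\begin{align*}
f(z^k)-f(z^{k+1})=f(x^{j_k})-f(x^{j_k+1})\ge\dfrac{\delta}{2}t_{j_k}\norm{d^{j_k}}^2=\dfrac{\delta}{2\tau_k}\norm{z^{k+1}-z^k}^2
\end{align*}
for all large $k$, which is the first condition in \eqref{two conditions} with $\beta:=\delta/2$. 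For the second (gradient) estimate, I would combine \eqref{equation grad 3k} from Lemma~\ref{gradien 3 dk} (valid since $j_k\notin\mathcal{N}$ and $j_k\ge k\ge N$) with $x^{j_k+1}-x^{j_k}=t_{j_k}d^{j_k}$ and \eqref{5 23} to obtain
\begin{align*}
\norm{\nabla f(z^k)}=\norm{\nabla f(x^{j_k})}\le 3\norm{d^{j_k}}=\dfrac{3}{\tau_k}\norm{z^{k+1}-z^k},
\end{align*}
which is the second condition in \eqref{two conditions} with $c:=3$.

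With both conditions in \eqref{two conditions} verified, with $\set{\tau_k}$ bounded away from zero (being a subsequence of the constant stepsizes, hence bounded below by $\delta'$), with $z^{k+1}\ne z^k$ for all $k$, with $\bar x$ an accumulation point of $\set{z^k}$, and with the KL property of $f$ at $\bar x$ holding for $\psi(t)=Mt^q$, $q\in(0,1)$, I would simply invoke Theorem~\ref{general rate} applied to $\set{z^k}$ and $\set{\tau_k}$ to read off the linear rate when $q\in(0,1/2]$ and the polynomial rate $\norm{z^k-\bar x}\le\varrho k^{-\frac{1-q}{2q-1}}$ when $q\in(1/2,1)$. I do not expect any genuine obstacle here: the constant-stepsize setting removes the one hard ingredient (lower-bounding the stepsize), so this proof is essentially a bookkeeping reduction that assembles Theorem~\ref{convergence under KL 2}, Lemma~\ref{gradien 3 dk}, the descent inequality \eqref{latt}, and Theorem~\ref{general rate}. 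The only point demanding care is keeping the null/non-null index bookkeeping straight via \eqref{5 23} so that the per-step inequalities transfer correctly from the full sequence to $\set{z^k}$.
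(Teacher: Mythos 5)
Your proposal is correct and follows essentially the same route as the paper's own proof: both reduce the statement to Theorem~\ref{general rate} applied to $\set{z^k}$ and $\tau_k:=t_{j_k}$, using Theorem~\ref{convergence under KL 2} for the convergence $x^k\to\bar x$, the constant-stepsize descent inequality from Corollary~\ref{convergence diminishing} for the first condition in \eqref{two conditions}, and Lemma~\ref{gradien 3 dk} for the second. Your write-up is in fact slightly more explicit than the paper's (which compresses the gradient estimate and the final invocation into ``arguing as in the proof of Theorem~\ref{rate}''), and your handling of the degenerate case where $\N\setminus\mathcal{N}$ is finite matches the paper's discussion preceding \eqref{z_k}.
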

\begin{proof}
 Note that our assumptions yield the fulfillment of those in 
Theorem~\ref{convergence under KL 2}, and thus we have that $x^k\rightarrow\bar x$ as $k\to\infty$. Defining $\tau_k:=t_{j_k}$ for all $k\in\N$ ensures that the stepsize sequence $\set{\tau_k}$ is bounded away from zero. Note that all the assumptions in Corollary~\ref{convergence diminishing} hold, and let $\delta>0$ be the constant taken from in (\ref{constant}). By the $L$-descent property of $f$ and the constant stepsize selection, we find by arguing similarly to the proof of Corollary~\ref{convergence diminishing} a number 
$ N\in\N$ such that 
\begin{align}
f(x^{k})-f(x^{k+1})&\ge\dfrac{\delta}{2}t_k\norm{d^k}^2
\;\text{ whenever }\;k\ge N.
\end{align}
Since $j_k\ge k\ge N$ for such $k$, it follows that 
 \begin{align*}\label{beta tau k}
f(z^k)-f(z^{k+1})&=f(x^{j_{k}})-f(x^{j_{k}+1})\ge \dfrac{\delta}{2} t_{j_k} \norm{d^{j_k}}^2\\
&=\dfrac{\delta}{2t_{j_k}}\norm{x^{j_k+1}-x^{j_k}}^2=\dfrac{\delta}{2\tau_k}\norm{z^{k+1}-z^k}^2.
\end{align*}
Note that all the assumptions of Theorem~\ref{convergence constant} are satisfied. Using this result together with Lemma \ref{gradien 3 dk} and then arguing as in the proof of Theorem~\ref{rate}, we complete the proof of this theorem.
\end{proof}\vspace*{-0.2in}

\section{Applications and Numerical Experiments}\label{sec: Numerical experiments}\vspace*{-0.05in}

In this section, we present efficient implementations of the developed IRG methods to solving particular classes of optimization problems that appear in practical modeling. We conduct numerical experiments and compare the results of computations by using our algorithms with those obtained by applying some other well-known methods. This section is split into two subsections addressing different classes of problems with the usage of different algorithms.\vspace*{-0.1in}

\subsection{Comparison with classical inexact  proximal point method}\vspace*{-0.05in}
 
This subsection addresses the {\em Least Absolute Deviations $($LAD$)$ Curve-Fitting problem} which is formulated as follows:
\begin{equation}\label{LAD}
\mbox{minimize }\;g(x):=\norm{Ax-b}_1\;\mbox{ over }\;x\in\R^n,
\end{equation}
where $A$ is an $m\times n$ matrix, $b$ is a vector in $\R^m$, and $\norm{u}_1:=\sum_{k=1}^m|u_k|$ for any $u=(u_1,\ldots,u_m)\in\R^m$.
Problem \eqref{LAD} exhibits robustness in outliers resistance and appears in many applied areas; see, e.g., \cite{peter80} for more discussions. Observe that \eqref{LAD} is a problem of {\em nonsmooth convex optimization}, but we can reduce it to a smooth problem by using a regularization procedure. In this way, we solve \eqref{LAD} by using our {\em IRG method with constant stepsize} and compare our approach with the usage of the {\em inexact proximal point method} (IPPM) proposed by Rockafellar in \cite{rockafellar76}.\vspace*{0.05in} 

To proceed, recall that the {\it Moreau envelope} and the {\it proximal mapping} of $g$ are defined by
\begin{equation}\label{Moreau}
e_{g} (x):=\inf_{y \in \R^n}\ph_{x}(y)\;\mbox{ and }\;\prox_{ g }(x):=\mathop{\rm argmin}_{y\in \R^n}\ph_{x}(y),\quad x\in\R^n,
\end{equation}
where the minimization mapping $\ph_{x}:\R^n\rightarrow\R$ is given by
\begin{align}\label{phi lambda}
\ph_{x}(y):=g (y)+\dfrac{1}{2}\norm{y-x}^2,\quad y\in\R^n.
\end{align}
Since $g$ is convex, it follows from \cite[Propositions~12.28 and 12.30]{bauschkebook} that $e_g $ is $\mathcal{C}^1$-smooth and that its gradient is Lipschitz continuous with constant $1$ being represented by
\begin{equation}\label{gradientMoreau}
\nabla e_g (x)=x-\text{\rm Prox}_{g}(x)\;\mbox{ for all }\;x\in \R^n.
\end{equation}
Moreover, the set of minimizers of $g$ coincides with the set of zeros of the gradient mapping $\nabla e_g$.
 
 This tells us that problem \eqref{LAD} can be equivalently transformed into the problem of finding stationary points of the smooth function $f:=e_g$. Therefore, it is possible to solve \eqref{LAD} by using Algorithm~\ref{al gd inexact} with constant stepsize, where an inexact gradient $g^k$ of $\nabla f(x^k)$ in Step~\ref{Step 1} satisfying the simplified condition \eqref{IRG condition constant} can be chosen from the conditions
\begin{align}\label{ineq IRG}
g^k:=x^k-p^k\;\text{ with }\norm{p^k-\prox_g(x^k)}\le \varepsilon_k.
\end{align}
Meanwhile, the iterative procedure of IPPM in \cite[page~878]{rockafellar76} for solving \eqref{LAD} is given by
\begin{align}\label{ineq lambda eps}
x^{k+1}=p^k\text{ with }\norm{p^k-\prox_g(x^k)}\le \delta_k,\text{ where }\sum_{k=1}^\infty \delta_k<\infty.
\end{align}
Since the function $\ph_{x^k}$ in \eqref{phi lambda} is strongly convex with constant $1$ \cite[Definition~2.1.3]{nesterovbook18}, the error bound for the distance between the inexact proximal point $p^k$ and the exact one $\prox_g(x^k)$ in \eqref{ineq IRG} and \eqref{ineq lambda eps} is satisfied if 
\begin{align}\label{def omega}
\ph_{x^k}(p^k)\le \inf\ph_{x^k}+\omega_k,
\end{align}
where $\omega_k:=\dfrac{\varepsilon_k^2}{2}$ for \eqref{ineq IRG} and $\omega_k:=\dfrac{\delta_k^2}{2}$ for \eqref{ineq lambda eps} by using \cite[Theorem~2.1.8]{nesterovbook18}. In this numerical experiment, we run the {\em Fast Iterative Shrinkage-Thresholding Algorithm} (FISTA) of Beck and Teboulle \cite{beck09} for the dual function of $\ph_{x^k}$ until the duality gap is below $\omega_k$, which therefore ensures \eqref{def omega}.\vspace*{0.05in}

The initial points are chosen as $x^1:=0_{\R^n}$ for both algorithms, while the detailed settings of each algorithm are given as follows:
\begin{itemize}\vspace*{-0.05in}
\item IRG: $\varepsilon_1=10, \theta=\mu=0.5$. Two selections of the initial radius $r_1$ are $20$ and $5$, which correspond to versions IRG-20 and IRG-5, respectively. To simplify the iterative sequence of Algorithm~\ref{al gd inexact} when $\norm{g^k}\le r_k+\varepsilon_k$, we put $x^{k+1}:=p^k$, which corresponds to the choice of stepsize $t_k=\frac{\norm{g^k}}{\norm{g^k}-\varepsilon_k}$.\vspace*{-0.1in}

\item IPPM: $\omega_k=\dfrac{1}{k^p}$ for all $k\in\N$, where $p=4$ or $p=2.1.$ These selections together with the definition of $\omega_k$ in \eqref{def omega} ensure that $\sum_{k=1}^\infty\delta_k<\infty$ as required for IPPM in \eqref{ineq lambda eps}. We also use the labels IPPM-4 and IPPM-2.1 for these versions of IPPM, respectively.
\end{itemize}\vspace*{-0.05in}

In this numerical experiment, we let IPPM-2.1 run for 200 iterations and record the function value obtained by this method. Then other methods run until their function values are lower than the recorded one of IPPM-2.1. We stop the methods when the time reaches the limit of $4000$ seconds.  The data $A,b$ are generated randomly with i.i.d. (identically and independent distributed) standard Gaussian entries. To avoid algorithms from reaching the solution promptly, we consider only the cases where $m\le n$ in \eqref{LAD}.

The numerical experiment is conducted on a computer with 10th Gen Intel(R) Core(TM) i5-10400 (6-Core 12M Cache, 2.9GHz to 4.3GHz) and 16GB RAM memory. The codes are written in MATLAB R2021a.  The detail information for the results are presented in Table \ref{table LAD}, where 'Test \#', '\textit{iter}', '\textit{fval}', '\textit{time}' mean Test number, the number of iterations, value of the objective function at the last iteration and the computational time, respectively. The errors $\omega_k$ in the inexact proximal point calculations \eqref{def omega} and the function values obtained by the algorithms over the duration of time  are also graphically illustrated in Figure~\ref{fig:err3} and Figure~\ref{fig:fval4}.
\begin{table}[H]
\small
\begin{tabular}{|cll|lll|lll|lll|lll|} 
\hline
\multirow{2}{*}{Test \#} & \multicolumn{1}{c}{\multirow{2}{*}{m}} & \multicolumn{1}{c|}{\multirow{2}{*}{n}} & \multicolumn{3}{c|}{IPPM-2.1}     & \multicolumn{3}{c|}{IPPM-4}       & \multicolumn{3}{c|}{IRG-5}    & \multicolumn{3}{c|}{IRG-20}       \\ 
\cline{4-15}
& \multicolumn{1}{c}{}      & \multicolumn{1}{c|}{}      & \multicolumn{1}{c}{iter} & \multicolumn{1}{c}{fval} & \multicolumn{1}{c|}{time} & \multicolumn{1}{c}{iter} & \multicolumn{1}{c}{fval} & \multicolumn{1}{c|}{time} & \multicolumn{1}{c}{iter} & \multicolumn{1}{c}{fval} & \multicolumn{1}{c|}{time} & \multicolumn{1}{c}{iter} & \multicolumn{1}{c}{fval} & \multicolumn{1}{c|}{time}  \\ 
\hline
1 & 300& 300 & 200& 3.44        & 11.79        & 200& 3.44        & 100.63   & 209& 3.44        & \textbf{5.51}& 210& 3.43        & \textit{7.91} \\
2 & 300& 600 & 200& 0.00        & 1.56& 17& 0.00        & \textit{0.38}& 16& 0.00        & \textbf{0.33}& 19& 0.00        & 0.44\\
3 & 600& 600 & 200& 0.44        & 140.35       & 201& 0.44        & 863.39   & 210& 0.44        & \textbf{78.09} & 211& 0.44        & \textit{119.78} \\
4 & 600& 1200& 200& 0.00        & 5.26& 16& 0.00        & \textit{1.72}& 17& 0.00        & \textbf{1.42}& 20& 0.00        & 1.77\\
5 & 1200 & 1200& 67& 8.00        & 4000& 14& 14.76       & 4000& 75& 7.98    & \textbf{949.46}& 75& 8.00        & \textit{1672.52}\\
\hline
\end{tabular}
\caption{Results for LAD Curve-Fitting problem}
\label{table LAD}
\end{table}
\begin{figure}[H]
\centering
\includegraphics[width=.3\textwidth]{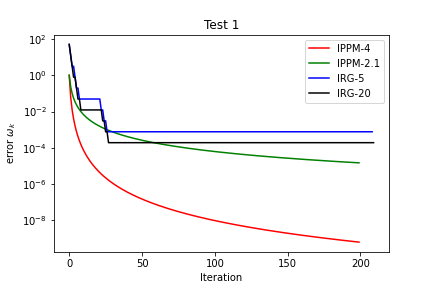}\quad
\includegraphics[width=.3\textwidth]{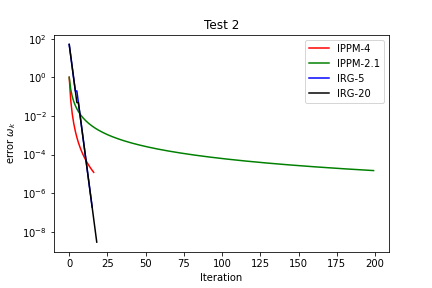}\quad
\includegraphics[width=.3\textwidth]{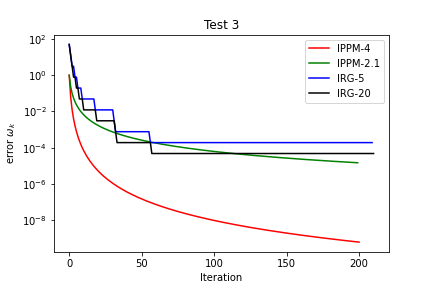}

\medskip

\includegraphics[width=.3\textwidth]{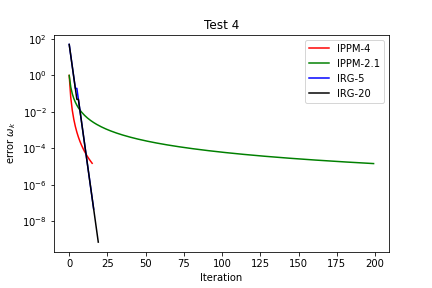}\quad
\includegraphics[width=.3\textwidth]{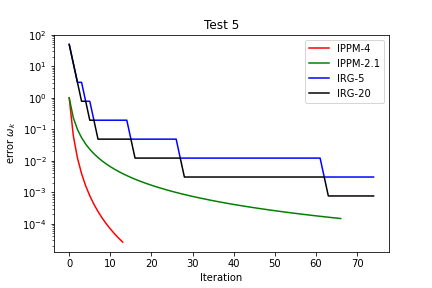}

\caption{Errors in proximal points calculation in iterations}
\label{fig:err3}
\end{figure}
 \begin{figure}[H]
\centering
\includegraphics[width=.3\textwidth]{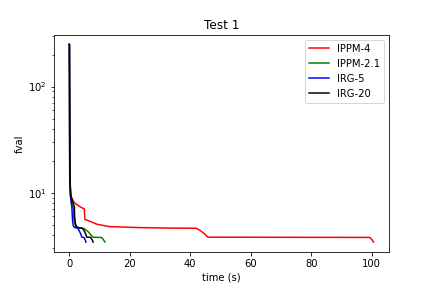}\quad
\includegraphics[width=.3\textwidth]{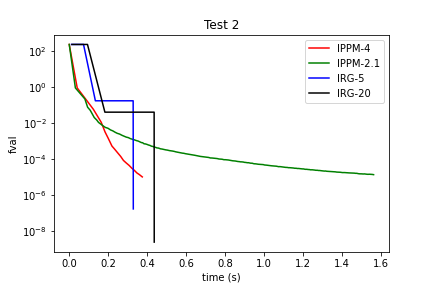}\quad
\includegraphics[width=.3\textwidth]{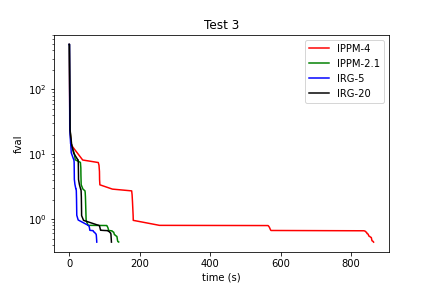}

\medskip

\includegraphics[width=.3\textwidth]{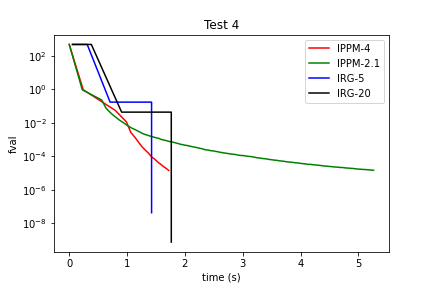}\quad
\includegraphics[width=.3\textwidth]{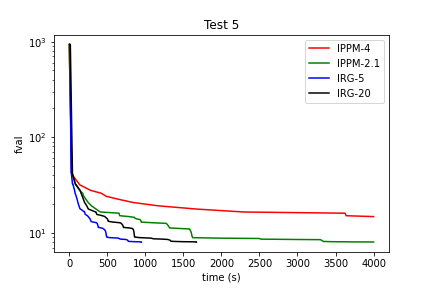}

\caption{Value of the objective function with respect to the  computational time}
\label{fig:fval4}
\end{figure}
It can be seen from Table~\ref{table LAD} that IRG-5 has the best performance in this numerical experiment. IRG-20 is the second fastest algorithm in Tests 1, 3, 5 while it is slightly slower than IPPM-4 in Tests 2, 4. In Test 5 with the largest dimensions $m=n=1200,$ IRG-5 is around 4 times faster than IPPM-2.1 while IPPM-4 even cannot reach the value obtained by IPPM-2.1 within the time limit. In this test, IRG-20 is also around 2.5 times faster than IPPM-2.1.

The graphs in Figure~\ref{fig:err3} and Figure~\ref{fig:fval4} show that the errors (in inexact proximal point calculations) of IRG are automatically adjusted to be suitable for different problems:
\begin{itemize}\vspace*{-0.05in}

\item In Tests 1, 3, 5 with $m=n$, it can be seen from Figure~\ref{fig:fval4} that IPPM-2.1 is faster than IPPM-4, which means that the use of larger errors is preferred in this case. Then Figure~\ref{fig:err3} shows that the errors used in IRG stagnate at most of the iterations. As a result, the IRG methods use the errors larger than that of the IPPM methods and thus achieve better performances. \vspace*{-0.05in}

\item In Tests 2, 4 with $m<n$, IPPM-4 with smaller errors performs better than IPPM-2.1. In this cases, the IRG methods decrease in almost every iteration and achieve smaller errors in comparison with IPPM-4. 
\end{itemize}\vspace*{-0.2in}

\subsection{Comparison with the exact gradient descent methods}\vspace*{-0.05in}

In the numerical experiments presented in this subsection, we show that our IRG method with backtracking stepsize, based on the usage of  inexact gradients, performs well compared with the famous methods employing the exact gradient calculation, which are the  {\em reduced gradient} (RG) method  method and {\em gradient descent} (GD) method in the following setting:
\begin{enumerate}\vspace*{-0.05in}
\item The {\em accuracy} of the  inexact gradient $g^k$ is {\em low}, i.e., $\norm{g^k-\nabla f(x^k)}\le\delta_k$, where $\delta_k$ is not too small relative to $\norm{\nabla f(x^k)}$.\vspace*{-0.05in}

\item The {\em accuracy} required for the solution is {\em increasing}.\vspace*{-0.05in}

\end{enumerate}
To demonstrate this, we choose the following two well-known smooth benchmark functions in global optimization taken from the survey paper \cite{jamil13}. 
\begin{itemize}\vspace*{-0.05in}
 \item The {\em Dixon $\&$ Price} function defined by
\begin{align*}
 f_{\rm dixon}(\mathbf{x}):=\left(x_{1}-1\right)^{2}+\sum_{i=2}^{n}i\left(2 x_{i}^{2}-x_{i-1}\right)^{2},\quad x\in\R^n.
\end{align*}The global minimum of this function is $\bar f_{\rm dixon}=0$, and the two solutions $x^*,y^*\in\R^n$ are 
\begin{align*}
\begin{cases}
x^*_1=1,&\\
x^*_{k}=\sqrt{\dfrac{x_{k-1}}{2}}&\text{ for }\;k=2,\ldots,n
\end{cases}
\end{align*}
and by $y^*_k=x^*_k$ for all $k=1,\ldots,n-1,$ $y^*_n=-x^*_n$.\vspace*{-0.05in}

\item The {\em Rosenbrock~{\rm 1}} function defined by
\begin{align*}
 f_{\rm rosen}(\mathbf{x}):=\sum_{i=1}^{n-1}\left[100\left(x_{i+1}-x_{i}^{2}\right)^{2}+\left(x_{i}-1\right)^{2}\right],\quad x\in\R^n.
\end{align*}
The global minimum of this function is $\bar f_{\rm rosen}=0$, and the unique solution is $(1,\ldots,1)\in\R^n$.
\end{itemize}
Since the information about the convexity and the Lipschitz continuity of gradients of the chosen objective functions is {\em unknown}, our experiments are conducted by algorithms, where stepsizes are obtained from the corresponding line searches. We use the following abbreviations:
\begin{itemize}\vspace*{-0.05in}
\item GD: {\em Gradient descent method with the backtracking linesearch}.\vspace*{-0.05in}

\item RGB \& IRGB: {\em Reduced gradient method with the backtracking linesearch $\&$ Inexact reduced gradient method with the backtracking linesearch}; see (\ref{defi: tk}).\vspace*{-0.05in}
\end{itemize}
To generate the inexactness for testing purposes, given the gradient error $\delta_k:=\min\set{\varepsilon_k,\rho_k}$ as in (\ref{calculate approx grad}), we create an inexact gradient $g^k$ by adding a random vector with the norm $0.5\delta_k$ to the exact gradient $\nabla f(x^k)$. To ensure  manually controlled errors between the exact gradients and inexact ones that do not decrease so fast, we choose $\rho_k:=1/\log (k+1)$. For all the methods in our experiments, the linesearch parameters are chosen as $\beta=0.7$ and $\gamma=0.5$. The initial radii $\varepsilon_1=r_1=5$ and the radius reduction factors  $\theta=0.7,\;\mu=0.7$ are also used for the RG and IRG methods. To avoid the initial points from being identical with  the solutions, we choose $x^1:=0_{\R^n}$ on tests using the Rosenbrock~1 function. In the tests using the Dixon \& Price functions, we choose $x^1:=1_{\R^n}$ to avoid the algorithms from going to different solutions. The condition
\begin{align*}
\norm{\nabla f(x)}\le\nu,\;\text{ where either }\;\nu=0.01 \text{ or }\nu=0.001.
\end{align*} 
is used as the stopping criterion for all the tests. The detailed information of the numerical experiments and the achieved numerical results are presented in Table~\ref{table 1}. The problem names are given in the forms Dn and Rn, where D stands for Dixon \& Price, R stands for Rosenbrock~1, and $n$ is the dimension of the tested problem. In these tables, ``Iter", ``fval" stand for the number of iterations and the function value at the last iteration. 
\begin{table}[H]
\centering
\begin{tabular}{|lc|ll|llll|lll|} 
\hline
\multicolumn{1}{|c}{\multirow{2}{*}{Name}} & \multirow{2}{*}{$\nu$} & \multicolumn{2}{c|}{GD}          & \multicolumn{4}{c|}{IRGB}  & \multicolumn{3}{c|}{RGB}      \\ 
\cline{3-11}
\multicolumn{1}{|c}{}  &      & \multicolumn{1}{c}{Iter} & \multicolumn{1}{c|}{fval} & \multicolumn{1}{c}{Iter} & \multicolumn{1}{c}{fval} & \multicolumn{1}{c}{$\varepsilon_k$} & \multicolumn{1}{c|}{$\delta_k$} & \multicolumn{1}{c}{Iter} & \multicolumn{1}{c}{fval} & \multicolumn{1}{c|}{$\varepsilon_k$}  \\ 
\hline
D200 & 0.01 & 1928 & 2.8E-05 & 1110 & 2.6E-05& 4.0E-03      & 2.0E-03& 998  & 2.76E-05      & 4.0E-03 \\
D500 & 0.01 & 3831 & 2.8E-05 & 4782 & 2.7E-05& 4.0E-03      & 2.0E-03& 5012 & 2.43E-05      & 4.0E-03 \\
D1000& 0.01 & 7655 & 2.8E-05 & 9347 & 2.6E-05& 4.0E-03      & 2.0E-03& 9271 & 2.58E-05      & 4.0E-03 \\
R200 & 0.01 & 20357& 9.5E-05 & 24966& 9.2E-05& 4.0E-03      & 2.0E-03& 25162& 8.65E-05      & 4.0E-03 \\
R500 & 0.01 & 46135& 9.3E-05 & 59768& 7.3E-05& 4.0E-03      & 2.0E-03& 59604& 9.20E-05      & 4.0E-03 \\
R1000& 0.01 & 89130& 9.4E-05 & 117754 & 8.8E-05& 4.0E-03      & 2.0E-03& 117845 & 9.09E-05      & 4.0E-03 \\
D200 & 0.001& 3294 & 2.7E-07 & 1870 & 2.7E-07& 4.7E-04      & 2.3E-04& 1704 & 2.8E-07& 4.7E-04 \\
D500 & 0.001& 6543 & 2.8E-07 & 8151 & 2.4E-07& 4.7E-04      & 2.3E-04& 7933 & 2.2E-07& 4.7E-04 \\
D1000& 0.001& 13078& 2.8E-07 & 15946& 2.7E-07& 4.7E-04      & 2.3E-04& 15598& 2.3E-07& 4.7E-04 \\
R200 & 0.001& 22664& 9.6E-07 & 26958& 8.9E-07& 4.7E-04      & 2.3E-04& 27395& 9.7E-07& 4.7E-04 \\
R500 & 0.001& 48442& 9.5E-07 & 61998& 7.8E-07& 4.7E-04      & 2.3E-04& 61875& 9.6E-07& 4.7E-04 \\
R1000& 0.001& 91431& 9.6E-07 & 119687 & 9.0E-07& 4.7E-04      & 2.3E-04& 120321 & 9.6E-07& 4.7E-04 \\
\hline
\end{tabular}
\caption{Comparison with  Dixon \& Price and Rosenbrock~1}
\label{table 1}
\end{table}

It can be seen that the performance of the IRG and RG methods in Tests~D200 with $\nu=0.01$ and $\nu=0.001$ is better than that of the GD method, while the latter is more efficient in the other tests. It is reasonable that GD usually performs better since it uses the exact gradient while RGB uses the reduced gradient and IRGB uses even the inexact one. In the worst case in Test R1000 with $\nu=0.01,$ the number of iterations of IRGB is equal around 1.3 times that of GD. It shows that IRGB does not suffer much from the use of inexact gradient compared with the performance of GD using the exact gradient. Table~\ref{table 1} also shows that the decrease of $\nu$ in $10$ times results in the increase of the number of iterations in IRGB with the rate at most $1.7$, where the worst case corresponds with the tests D500. This rate is similar to the rate obtained by the GD method in these tests, which confirms that our IRG method with the backtracking stepsize does not suffer from error accumulation.

The graphs below show that the errors $\delta_k$ of the inexact gradient used in IRGB is automatically adjusted to be not too small or too large compared with $\norm{\nabla f(x^k)}$. This confirms the intuitive conclusion on the IRG methods discussed in Remark~\ref{remark intu}(ii). Figure~\ref{fig:smooth} shows that the selections of errors $\delta_k=k^{-p},p\ge 1$ in the existing methods \cite{bertsekasbook,nesterov14,gilmore95} do not fit the unexpected fluctuations in the norm of the exact gradient given in the tests using the Rosenbrock function, which may lead to the over approximation and under approximation issues discussed in Section~\ref{sec:intro}.

\begin{figure}[H]
\centering
\includegraphics[width=.3\textwidth]{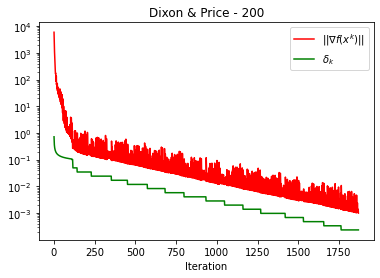}\quad
\includegraphics[width=.3\textwidth]{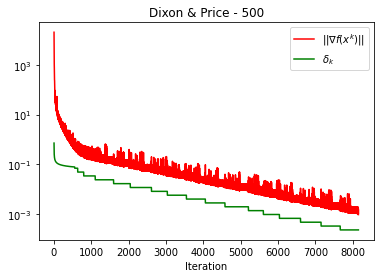}\quad
\includegraphics[width=.3\textwidth]{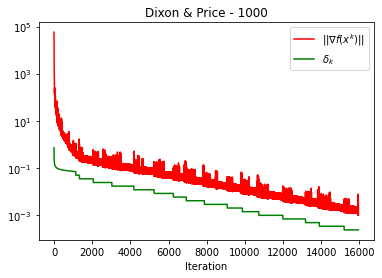}

\medskip

\includegraphics[width=.3\textwidth]{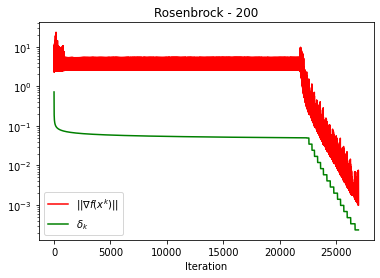}\quad
\includegraphics[width=.3\textwidth]{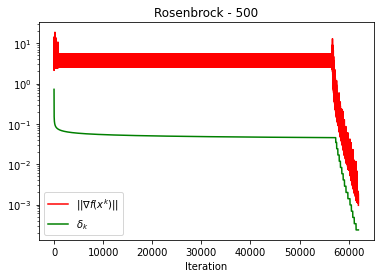}\quad
\includegraphics[width=.3\textwidth]{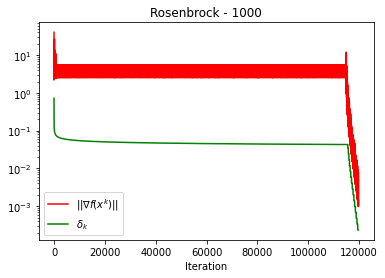}

\caption{Errors of IRGB compared with the norm of exact gradient}\label{fig:smooth}
\end{figure}\vspace*{-0.2in}

\section{ Conclusions and Further Research}\label{sec:conc}\vspace*{-0.05in}

In this paper, we propose and develop the inexact reduced gradient methods with different stepsize selections to solve problems of nonconvex optimization. These methods achieve stationary accumulation points and, under additional assumptions on the KL property of the objective functions, the global linear convergence. The convergence analysis of the developed algorithms is based on novel convergence results established for general linesearch methods. The theoretical and numerical comparisons show that our methods do not suffer much from the error accumulation and are able to automatically adjust the errors in the exact gradient approximations to get a better performance than the existing methods using common selections of errors.

In our future research, we aim at developing the IRG methods in different directions, which include designing zeroth-order algorithms by using practical methods for approximating gradients, designing inexact versions of methods frequently used in nonconvex nonsmooth optimization, e.g., the proximal point and proximal gradient methods, and also designing appropriate IRG methods for problems of constrained optimization. The obtained results would allow us to develop new applications to important classes of models in machine learning, statistics, and related disciplines.\vspace*{-0.15in}

\end{document}